\newcommand\ede{ \, := \, }
\newcommand\lp{`}
\newcommand\rp{'}
\newcommand\dlp{``}
\newcommand\drp{''}
\newcommand\datver[1]{\def\datverp
{\par\boxed{\boxed{\text{Version: #1; Run: \today}}}}}\datver{0.1}
\newcommand{\mfk}{\mathfrak}
\newcommand{\mfkg}{\mathfrak g}
\newcommand\pullback{\sp{\downarrow\downarrow}}
\newcommand{\tto}{\rightrightarrows}
\newcommand{\RR}{\mathbb R}
\newcommand{\ZZ}{\mathbb Z}
\newcommand{\CI}{{\mathcal C}^{\infty}}
\newcommand{\CIc}{{\mathcal C}^{\infty}_{\text{c}}}
\newcommand\pa{{\partial}}
\newcommand{\Aut}{\operatorname{Aut}}
\newcommand\ssub{stratified tame submersion}
\newcommand{\edge}{\maE}
\newcommand{\maE}{\mathcal E}
\newcommand{\maG}{\mathcal G}
\newcommand{\maH}{\mathcal H}
\newcommand{\maK}{\mathcal K}
\newcommand{\maP}{\mathcal P}
\newcommand{\maT}{\mathcal T}
\newcommand{\maV}{\mathcal V}
\newcommand{\maW}{\mathcal W}
\newcommand\m[1]{${#1}$}
\newtheorem{theorem}{Theorem}[section]
\newtheorem{proposition}[theorem]{Proposition}
\newtheorem{corollary}[theorem]{Corollary}
\newtheorem{lemma}[theorem]{Lemma}
\newtheorem{notation}[theorem]{Notations}
\theoremstyle{definition}
\newtheorem{definition}[theorem]{Definition}
\theoremstyle{remark}
\newtheorem{remark}[theorem]{Remark}
\newtheorem{example}[theorem]{Example}
\author[V. Nistor]{Victor Nistor} \address{Universit\'{e} de Lorraine,
  UFR MIM, Ile du Saulcy, CS 50128, 57045 METZ, France and
  Inst. Math. Romanian Acad.  PO BOX 1-764, 014700 Bucharest Romania} 
\email{victor.nistor@univ-lorraine.fr} 
\thanks{
  V. Nistor has been partially supported by
  ANR-14-CE25-0012-01 (SINGSTAR).\\
Manuscripts available from {\bf http:{\scriptsize
    //}iecl.univ-lorraine.fr{\scriptsize
    /}$\tilde{}$Victor.Nistor{\scriptsize /}}\\
AMS Subject classification (2010): 
58H05 (primary), 58J40, 22A22, 47L80}
\date\today
\begin{document}

\title[Groupoids]{Desingularization of Lie groupoids and 
pseudodifferential operators on singular spaces}

\begin{abstract}  
We introduce and study a ``desingularization'' of a Lie groupoid $\maG$ along an
``$A(\maG)$-tame'' submanifold $L$ of the space of units $M$.  An
$A(\maG)$-tame submanifold $L \subset M$ is one that has, by
definition, a tubular neighborhood on which $A(\maG)$ becomes a thick
pull-back Lie algebroid. The construction of the desingularization
$[[\maG:L]]$ of $\maG$ along $L$ is based on a canonical fibered
pull-back groupoid structure result for $\maG$ in a neighborhood of
the tame $A(\maG)$-submanifold $L \subset M$. This local structure result is
obtained by integrating a certain groupoid morphism, using results of
Moerdijk and Mrcun (Amer. J. Math. 2002). Locally, the desingularization
$[[\maG:L]]$ is defined using a construction of Debord and Skandalis
(Advances in Math., 2014). The space of units of
the desingularization $[[\maG:L]]$ is $[M:L]$, the blow up of $M$
along $L$. The space of units and the desingularization groupoid 
$[[\maG:L]]$ are constructed using a gluing construction of
Gualtieri and Li (IMRN 2014). 
We provide an explicit description of the structure of the
desingularized groupoid and we identify its Lie algebroid, which is
important in analysis applications. We also discuss a variant of our 
construction that is useful for analysis on asymptotically hyperbolic
manifolds. We conclude with
an example relating our constructions to the so called ``edge
pseudodifferential calculus.'' The paper is written such that it
also provides an introduction to Lie groupoids designed for
applications to analysis on singular spaces.
\end{abstract}

\maketitle

\tableofcontents

\section*{Introduction}

Motivated by certain questions in analysis on singular spaces, we
introduce and study the desingularization of a Lie groupoid $\maG$
with respect to a tame submanifold $L$ of its set of units $M$. More
precisely, let $A\to M$ be a Lie algebroid over a manifold with
corners $M$ and let $L \subset M$ be a submanifold. Recall that $L$ is
called {\em $A$-tame} if it has a tubular neighborhood $\pi : U \to L$
in $M$ such that the restriction $A\vert_{U}$ is isomorphic to the
thick pull-back Lie algebroid $\pi\pullback (B)$, for some Lie
algebroid $B \to L$. Let $\maG$ be a Lie groupoid with units $M$ and
Lie algebroid $A(\maG)$.  Let $L \subset M$ be an $A(\maG)$-tame
submanifold.  In this paper, we define and study a new Lie groupoid
$[[\maG: L]]$, called the \dlp desingularization\drp\ of $\maG$ along
$L$. The desingularization $[[\maG:L]]$ has units $[M:L]$, the blow-up
of $M$ along $L$, and plays in the category of Lie algebroids a role
similar to the role played by the usual (real) blow-up in the category
of manifolds with corners.

Let us try now to give a quick idea of this desingularization
procedure, the full details being given in the main body of the paper.
Let $\maG$ is a Lie groupoid with units $M$ (we write $\maG \tto M$)
and let $L \subset M$ be an $A(\maG)$-tame submanifold with tubular
neighborhood $\pi : U \to L$. In particular, the blow-up $[M:L]$ is
also defined if $L$ is tame. Let us also assume the fibration $\pi : U
\to L$ to be a ball bundle over $L$. The reduction groupoid
$\maG_{U}\sp{U}$ will then have a fibered pull-back groupoid structure
on $U$ (Theorem~\ref{thm.tame}), and hence we can replace it with a
slight modification of the adiabatic groupoid
to define the {\em desingularization} $[[\maG:L]]$ of 
$\maG$ along $L$. To this end, we use also a gluing 
construction due to Gualtieri and Li \cite{Gualtieri}.

Our definition of the desingularization of a Lie groupoid with respect
to a tame submanifold is motivated by the method of successively
blowing-up the lowest dimensional strata of a singular space, which
was successfully used in the analysis on singular spaces.  The
successive blow up of the lowest dimensional singular strata of a
(suitable) singular space leads to the eventual removal of all
singularities. This approach was used in \cite{BMNZ} to obtain a
well-posedness result for the Poisson problem in weighted Sobolev
spaces on $n$-dimensional polyhedral domains using energy methods (the
Lax-Milgram lemma). One would like to use also other methods than the
energy method to study singular spaces, such as the method of layer
potentials, but then one has to study the resulting integral kernel
operators.

In fact, our definition of desingularization groupoid provides the
necessary results for the construction of integral kernel operators on
the resulting blown-up spaces, since the kernels of the resulting
integral operators will be defined on the groupoid. It turns out that
quite general operators can be obtained using invariant
pseudodifferential operators on the groupoid \cite{aln1, ASkandalis2,
  Monthubert, NWX}. For instance, by combining this desingularization
construction with the construction of psedodifferential operators on
groupoids, one can recover the pseudodifferential calculi of Grushin
\cite{grushin71}, Mazzeo \cite{mazzeo91}, and Schulze
\cite{schulzeEdge, SchulzeBook91}.

A groupoid $\maG \tto M$ (that is, a groupoid with units $M$), can be
used to model the analysis on $M$, which is our main interest. While
our desingularization procedure is the groupoid counterpart of the
blow-up of $M$ with respect to a tame submanifold, it is the later
that is our main interest.  This leads necessarily to manifolds with
corners, as follows: the blow-up of a smooth manifold with respect to
a submanifold is a manifold with boundary, but the blow-up of a
manifold with boundary along a tame submanifold is a manifold with
corners. In general, the blow-up of a manifold with corners with
respect to a tame submanifold is a manifold with corners of higher
maximum codimension (i.e. rank).  Thus, even if one is interested in
analysis on smooth manifolds, sometimes one is lead to consider also
manifolds with corners. See, for example, \cite{BMNZ, daugeBook, kottkeMelrose,
  NP} for some motivation and futher references. This paper will thus 
provide the background for the
construction of the integral kernel (or pseudodifferential) operators
on the resulting blown-up spaces.

The paper is organized as follows. The first section is devoted mostly
to background material. We thus review manifolds with corners and tame
submersions and establish a canonical (i.e. fibration) local form for
a tame submersion that generalizes to manifolds with corners the
classical result in the smooth case. We then recall the definitions of
a Lie groupoid, of a Lie algebroid, and of the Lie algebroid
associated to a Lie groupoid. We do that in the framework that we
need, that is, that of manifolds with corners. Almost everything
extends to the setting of manifolds with corners without any
significant change. One must be careful, however, to use {\em tame
  fibrations}. One of the main results of this paper is the
construction of the desingularization of a Lie groupoid $\maG$ along
an $A(\maG)$-tame submanifold. This requires several other,
intermediate constructions, such as that of the adiabatic
(deformation) groupoid and of the thick pull-back Lie algebroid. In
the second section, we thus review and extend all these examples as
well as other, more basic ones that are needed in the construction of
the desingularization groupoid. In particular, we introduce the so
called \dlp edge modification\drp\ of a groupoid using results of
Debord and Skandalis \cite{DebordSkandalis}.  We shall need a gluing
construction due to Gualtieri and Li \cite{Gualtieri}, which we also
review and extend to our setting. The third section contains most of our
main results. We first prove a local structure theorem for a Lie
groupoid $\maG$ with units $M$ in a tubular neighborhood $\pi : U \to
L$ of an $A(\maG)$-tame submanifold $L \subset M$ using results on the
integration of Lie algebroid morphisms due to Moerdijk and Mrcun
\cite{MoerdijkMrcun02}.  More precisely, we prove that the reduction
of $\maG$ to $U$ is isomorphic to $\pi\pullback (\maG_L\sp{L})$, the
fibered pull-back groupoid to $U$ of the reduction of $\maG$ to
$L$. This allows us to define the desingularization first for this
type of fibered pull-back groupoids.  We identify the Lie algebroid of
the desingularization as the desingularization of its Lie algebroid
(the desingularization of a Lie algebroid was introduced in
\cite{schroedinger}). We also introduce an {\em anisotropic} version of
the desingularized groupoid and determine its Lie algebroid as well. 
We conclude with an example related to the \lp edge\rp-calculus
(see \cite{krainer2014} and the references therein).

The paper is written such that it provides also an introduction to Lie
groupoids for students and researchers interested in their
applications to analysis on singular spaces. This is the reason for
which the first two sections contain additional material that will
help people interested in analysis understand the role of
groupoids. For instance, we discuss the convolution algebras of some
classes of Lie groupoids. We also provide most of the needed
definitions to make the paper as self-contained as possible. We also
study in detail the many needed examples.

\subsection*{A note on notation and terminology} We shall use manifolds
with corners extensively.  They are defined in Subsection
\ref{ssec.mc}.  A manifold without corners will be called {\em
  smooth}.  We take the point of view that all maps, submanifolds, and
so on will be defined in the same way in the corner case as in the
smooth case, except that all our submanifolds will be assumed to be
{\em closed}. Sometimes, we need maps and submanifolds with special
properties, they will usually be termed \dlp tame\drp, for instance, a
tame submersion of manifolds with corners will have the property that
all its fibers are smooth manifolds. This property is not shared by
general submersions, however. Also, we use only {\em real} vector
bundles and functions, to avoid confusion and simplify notation. The
results extend without any difficulty to the complex case, when one
wishes so.

Moreover, all our manifolds will be paracompact, but we do not require
them to be Hausdorff in general. However, all the spaces of units of
groupoids and the bases of Lie groupoids will be Hausdorff.

\subsection*{Acknowledgements} 
We thank Daniel and Ingrid Belti\c{t}\u{a} and 
Kirill Mackenzie for useful discussions.

\section{Preliminaries on Lie algebroids}
\label{sec.LgLa}
We now recall the needed definitions and properties of Lie groupoids
and of Lie algebroids. We shall work with manifolds with corners, so
we also recall some basic definitions and results on manifolds with
corners.  Few results in this section are new, although the
presentation probably is. We refer to Mackenzie\rp s books
\cite{MackenzieBook1, MackenzieBook2} for a nice introduction to the subject, as well
as to further references and historical comments on of Lie groupoids
and Lie algebroids. See also \cite{buneciSurvey, marcutzActa, MoerdijkFolBook} for
the more specialized issues relating to the applications envisioned
in this paper.

\subsection{Manifolds with corners and notation}\label{ssec.mc}
In the following, by a {\em manifold}, we shall mean a $\CI$-manifold,
possibly with corners. By a {\em smooth manifold} we shall mean a
$\CI$-manifold {\em without corners.} All our manifolds will be
assumed to be paracompact.  Recall \cite{Joyce, MargalefHandbook,
  kottkeMelrose} (and the references therein) 
  that $M$ is a {\em manifold with corners} of
dimension $n$ if it is locally diffeomorphic to an open subset of
$[-1, 1]\sp{n}$ with smooth changes of coordinates.  A point $p \in M$
is called of {\em depth} $k$ if it has a neighborhood $V_p$
diffeomorphic to $[0, a)^{k} \times (-a, a)^{n-k}$, $a > 0$, by a
  diffeomorphism $\phi_p : V_p \to [0, a)^{k} \times (-a, a)^{n-k}$
    mapping $p$ to the origin:\ $\phi_p(p) = 0$. Such a neighborhood
    will be called {\em standard}. A function $f : M \to M_1$ between
    two manifolds with corners will be called {\em smooth} if it its
    components are smooth in all coordinate charts.

A connected component $F$ of the set of points of depth $k$ will be
called an {\em open face (of codimension $k$)} of $M$.  The maximum
depths of a point in $M$ will be called the {\em rank} of $M$. Thus
the smooth manifolds are exactly the manifolds of rank zero. The
closure in $M$ of an open face $F$ of $M$ will be called a {\em
  closed} face of $M$. The closed faces of $M$ may not be manifolds
with corners on their own.

We define the tangent space to a manifold with corners $TM$ as usual,
that is, as follows: the vector space $T_pM$ is the set of derivations
$D_p : \CI(M) \to \RR$ satisfying $D_p(fg) = f(p) D_p(g) + D_p(f)
g(p)$ and $TM$ is the disjoint union of the vector spaces $T_pM$, with
$p \in M$. Let $v$ be a tangent vector to $M$ (say $v \in T_pM$). We
say that $v$ is {\em inward pointing} if, by definition, there exists
a smooth curve $\gamma : [0, 1] \to M$ such that $\gamma\rp(0) = v$
(so $\gamma(0) = p$). The set of inward pointing vectors in $v \in
T_{x}(M)$ will form a closed cone denoted $T_{x}\sp{+}(M)$. If, close
to $x$, our manifold with corners is given by the conditions $\{
f_i(y) \ge 0\}$ with $df_i$ linearly independent at $x$, then the cone
$T_{x}\sp{+}(M)$ is given by
\begin{equation}
  T_{x}\sp{+}(M) \, = \, \{ v \in T_{x}M,\, df_i(v) \ge 0 \}.
\end{equation}

Let $M$ and $M_1$ be manifolds with corners and $f : M_1 \to M$ be a
smooth map. Then $f$ induces a vector bundle map $df : TM_1 \to TM$,
as in the smooth case, satisfying also $df(T_z\sp{+}(M_1)) \subset
T_{f(z)}\sp{+}M$. If the smooth map $f : M_1 \to M$ is injective, has
injective differential $df$, and has closed range, then we say that
$f(M_1)$ is a (closed) {\em submanifold} of $M$.
All our submanifolds will be closed, so we shall simply say \dlp
manifold\drp\ instead of \dlp closed manifold.\drp\ We are thus
imposing the least restrictions on smooth maps and submanifolds,
unlike \cite{Joyce}, for example. For example, a smooth map $f$
between manifolds with corners is a {\em submersion} if, by
definition, the differential $df = f_*$ is surjective (as in the case
of smooth manifolds). However, we will typically need a special class
of submersions with additional, properties, the {\em tame
  submersions}. More precisely, we have the following definition.

\begin{definition}\label{def.tame submersion} 
A {\em tame submersion} $h$ between two manifolds with corners $M_1$
and $M$ is a smooth map $h : M_1 \to M$ such that its differential
$dh$ is surjective everywhere and
\begin{equation*}
 (dh_x)\sp{-1} (T_{h(x)}\sp{+}M) \, = \, T_{x}\sp{+}M_1 \,.
\end{equation*}
(That is, $dh(v)$ is an inward pointing vector of $M$ if, and only if,
$v$ is an inward pointing vector of $M_1$.)
\end{definition}

We do not require our tame submersions to be surjective (although, as
we will see soon below, they are open, as in the smooth case). We
shall need the following lemma.

\begin{lemma}\label{lemma.depth}
 Let $h : M_1 \to M$ be a tame submersion of manifolds with corners.
 Then $x$ and $h(x)$ have the same depth.
\end{lemma}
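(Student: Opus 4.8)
The plan is to work locally near a point $x \in M_1$ and use the defining inequality characterization of tame submersions together with the standard-chart normal form of a point of given depth. First, I would choose a standard neighborhood $V_{h(x)}$ of $h(x)$ in $M$, so that near $h(x)$ the manifold $M$ is cut out by the conditions $f_1 \ge 0, \dots, f_k \ge 0$ with the $df_i$ linearly independent, where $k$ is the depth of $h(x)$; here $T^+_{h(x)}M = \{w : df_i(w) \ge 0,\ i = 1, \dots, k\}$ is a cone with exactly $k$ independent facet-defining functionals. Pulling back, the functions $g_i := f_i \circ h$ are smooth near $x$, vanish at $x$, and satisfy $g_i \ge 0$; moreover $dg_i = df_i \circ dh_x$, and since $dh_x$ is surjective, the linear functionals $dg_i$ on $T_xM_1$ are still linearly independent (their images span the same $k$-dimensional space the $df_i$ span, and they are pulled back along the surjection $dh_x$).

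The heart of the argument is the tameness condition $(dh_x)^{-1}(T^+_{h(x)}M) = T^+_x M_1$. Unwinding: a vector $v \in T_x M_1$ is inward-pointing iff $df_i(dh_x v) \ge 0$ for all $i$, i.e. iff $dg_i(v) \ge 0$ for all $i$. Thus $T^+_x M_1$ is exactly the cone cut out in $T_x M_1$ by the $k$ linearly independent functionals $dg_1, \dots, dg_k$. Now I would invoke the fact (immediate from the standard-chart description of a manifold with corners) that if $x$ has depth $\ell$, then $T^+_x M_1$ is a cone whose linear span is all of $T_x M_1$ and which is cut out by exactly $\ell$ linearly independent facet functionals — equivalently, the lineality space (largest linear subspace) of $T^+_x M_1$ has codimension exactly $\ell = \operatorname{depth}(x)$. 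Since the same cone $T^+_x M_1$ is presented by the $k$ independent functionals $dg_i$, its lineality space has codimension $k$, forcing $\ell = k$, i.e. $\operatorname{depth}(x) = \operatorname{depth}(h(x))$.

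To make this airtight I would isolate the codimension-of-lineality description as the right invariant: for a point $p$ of depth $m$ in a manifold with corners, $T^+_p$ is linearly isomorphic to $[0,\infty)^m \times \RR^{n-m}$, so its lineality space is $\{0\}^m \times \RR^{n-m}$, of codimension $m$; and this codimension equals the number of functionals in any minimal linearly independent presentation of the cone. I expect the main obstacle to be purely bookkeeping: checking that the pulled-back functions $g_i = f_i \circ h$ do give a valid local defining system for $M_1$ near $x$ (so that the cone $T^+_x M_1$ really is $\{dg_i \ge 0\}$ and no extra facets are hidden) — this is where surjectivity of $dh_x$ and the tameness equality are both genuinely used, and one must be slightly careful that $dh_x$ being onto means the $dg_i$ remain independent and that the preimage cone computation introduces no spurious constraints. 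Once that is in place, the depth equality is just the codimension count.
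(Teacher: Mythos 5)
Your proof is correct and is essentially the paper's own argument made precise: the paper's one-line proof also identifies the depth of a point with an invariant of the inward-pointing cone (the ``depth of the origin'' in $T_{x}^{+}M_1$, i.e.\ the codimension of its lineality space) and observes that this invariant is preserved under the surjective map $dh_x$ because of the tameness identity $(dh_x)^{-1}(T_{h(x)}^{+}M)=T_{x}^{+}M_1$. Your version, with the facet functionals $f_i$ and the codimension count spelled out, is a faithful expansion of the same idea.
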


\begin{proof}
 This is because the depth of $x$ in $M$ is the same as the depth of
 the origin $0$ in $T_{x}\sp{+}M_1$, which, in turn, is the same as
 the depth of the origin $0$ in $T_{h(x)}\sp{+}M$ since $dh_x$ is
 surjective and $(dh_x)\sp{-1} (T_{h(x)}\sp{+}M) = T_{x}\sp{+}M_1$.
\end{proof}

The following lemma is probably known, but we could not find a
suitable reference.

\begin{lemma}\label{lemma.corners}
Let $h : M_1 \to M$ be a tame submersion of manifolds with corners. 
\begin{enumerate}[(i)]
 \item The rank of $M_1$ is $\leq $ the rank of $M$.
 \item For $m_1 \in M_1$, there exists an open neighborhood $U_1$ of
   $m_1$ in $M_1$ such that $U:=h(U_1)$ is open and the restriction of
   $h$ to $U_1$ is a fibration $U_1 \to U$.
 \item Let $L \subset M$ be a submanifold, then $L_1 := h\sp{-1}(L)$ is
a submanifold of $M_1$ of rank $\le$ the rank of $L$.
\end{enumerate} 
\end{lemma}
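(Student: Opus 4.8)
\emph{Overview and part (i).} The plan is to read off (i) directly from Lemma~\ref{lemma.depth}, to prove (ii) by bringing $h$ into a fibration normal form in adapted corner charts, and to deduce (iii) from (ii) by reducing to a trivial fibration. For (i): if $x\in M_1$, then by Lemma~\ref{lemma.depth} the depth of $x$ equals that of $h(x)$, which is at most the rank of $M$; taking the supremum over $x\in M_1$ gives that the rank of $M_1$ is at most the rank of $M$.

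\emph{Part (ii): the normal form at $m_1$.} Fix $m_1\in M_1$, set $p:=h(m_1)$, and let $k$ be their common depth (Lemma~\ref{lemma.depth}), $n_1=\dim M_1$, $n=\dim M$. Choose standard charts $[0,a)^k\times(-a,a)^{n_1-k}$ around $m_1$ and $[0,a)^k\times(-a,a)^{n-k}$ around $p$, with coordinates $(x_1,\dots,x_k,y)$ and $(u_1,\dots,u_k,v)$, so that locally $M_1=\{x_i\ge 0\}$ and $M=\{u_j\ge 0\}$; write $h=(U_1,\dots,U_k,V_1,\dots,V_{n-k})$, with $U_j=u_j\circ h\ge 0$. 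Since $dh_{m_1}$ is onto, its transpose is injective, so $dU_1,\dots,dU_k,dV_1,\dots,dV_{n-k}$ are linearly independent at $m_1$; in particular each $dU_j(m_1)\ne 0$. Written in coordinates, the tameness identity $(dh_{m_1})^{-1}(T^{+}_{p}M)=T^{+}_{m_1}M_1$ becomes the equality of polyhedral cones
\begin{equation*}
 \{\,w\in T_{m_1}M_1 : dU_j(w)\ge 0,\ j=1,\dots,k\,\}\ =\ \{\,w\in T_{m_1}M_1 : dx_i(w)\ge 0,\ i=1,\dots,k\,\}.
\end{equation*}
The right-hand side is a full-dimensional polyhedral cone with exactly $k$ facets, so its irredundant description as an intersection of half-spaces uses exactly $k$ half-spaces; the left-hand side is such a description with $k$ (nontrivial, since $dU_j\neq 0$) half-spaces, hence is irredundant, and by uniqueness of the irredundant description (equivalently, by comparing extreme rays of the polar cones) we may, after relabelling $u_1,\dots,u_k$, assume $dU_j(m_1)=c_j\,dx_j(m_1)$ with $c_j>0$.

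\emph{Part (ii): straightening.} I next claim each $U_j$ vanishes on the local face $\{x_j=0\}$. At a point $q$ of that face with $x_i(q)>0$ for all $i\ne j$ and $q$ close to $m_1$, the depth of $q$ is $1$, hence so is the depth of $h(q)$ by Lemma~\ref{lemma.depth}, so exactly one of $U_1(q),\dots,U_k(q)$ vanishes. It cannot be $U_i(q)$ for $i\ne j$: otherwise $q$ is a minimum of the non-negative function $U_i$ on $M_1$, which near $q$ is the half-space $\{x_j\ge 0\}$, so the first-order condition forces $dU_i(q)=\mu\,dx_j(q)$ with $\mu\ge 0$, whence $\partial U_i/\partial x_i$ vanishes at $q$ (recall $i\ne j$); but $\partial U_i/\partial x_i$ is continuous and equals $c_i>0$ at $m_1$, a contradiction once $q$ is close enough to $m_1$. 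Thus $U_j=0$ on a dense subset of $\{x_j=0\}$ near $m_1$, hence on all of it; writing $U_j=x_jW_j$ with $W_j$ smooth, the relation $dU_j(m_1)=c_j\,dx_j(m_1)$ gives $W_j(m_1)=c_j>0$. Therefore $(x,y)\mapsto(U_1,\dots,U_k,y)$ is a diffeomorphism near $m_1$ that preserves the corner structure (each $x_j\ge 0\Leftrightarrow U_j\ge 0$), so we may assume $h(x,y)=(x_1,\dots,x_k,V_1(x,y),\dots,V_{n-k}(x,y))$. Surjectivity of $dh_{m_1}$ then makes the $y$-derivative of $(V_1,\dots,V_{n-k})$ of rank $n-k$ at $m_1$, and after relabelling the $y$-coordinates, $(x,y)\mapsto(x,V_1,\dots,V_{n-k},y_{n-k+1},\dots,y_{n_1-k})$ is a corner-preserving diffeomorphism near $m_1$ in which $h$ is the coordinate projection onto the first $n$ coordinates. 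Shrinking to a product box produces an open $U_1\ni m_1$ on which $h$ is a trivial fibration onto the open set $U:=h(U_1)\subset M$ (with smooth fibers, incidentally), proving (ii).

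\emph{Part (iii) and the main difficulty.} The set $L_1=h^{-1}(L)$ is closed in $M_1$ because $L$ is closed and $h$ continuous. Given $m_1\in L_1$, choose $U_1\ni m_1$ as in (ii), identified with a product $U\times W\to U$, $W$ a box; then $L_1\cap U_1$ is identified with $(L\cap U)\times W$, which is a submanifold of $U\times W$ since $L\cap U$ is an open subset of the submanifold $L$, hence a submanifold of $U$. These local descriptions agree on overlaps, so $L_1$ is a submanifold of $M_1$. Writing $m_1=(\ell,w)\in(L\cap U)\times W$, the depth of $m_1$ in $L_1$ equals the depth of $\ell$ in $L$ (the box $W$ being smooth contributes nothing), and $\ell=h(m_1)$; taking the supremum over $m_1\in L_1$ shows that the rank of $L_1$ equals $\sup\{\,\operatorname{depth}_L(q) : q\in L\cap h(M_1)\,\}$, which is at most the rank of $L$. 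I expect the main obstacle to be the straightening step in (ii): combining the polyhedral-cone computation at $m_1$ with the repeated use of Lemma~\ref{lemma.depth} at nearby boundary points to force $U_j\equiv 0$ on $\{x_j=0\}$, while checking that every coordinate change used is a genuine corner chart (sending $\{x_j\ge 0\}$ to $\{x_j\ge 0\}$).
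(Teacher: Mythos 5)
Your proof is correct, and for the key part (ii) it takes a genuinely different route from the paper. The paper does not straighten $h$: it extends $h$ to a usual submersion $h_0$ of open (corner-free) neighborhoods, invokes the standard submersion theorem to get a fibration $h_1 : V_1 \to V$ there, and then proves the crucial identity $M_1 \cap V_1 = h_1^{-1}(M \cap V)$ by contradiction, lifting a path from an interior point to a hypothetical bad point $p \notin M_1$ with $h_1(p) \in M \cap V$ and using depth preservation (Lemma~\ref{lemma.depth}) at the first time the lift hits $\partial M_1$. You instead establish a genuine corner-adapted normal form: the tameness condition, read as an equality of full-dimensional polyhedral cones with irredundant facet descriptions, forces $dU_j(m_1) = c_j\,dx_j(m_1)$ with $c_j>0$ after relabelling; the boundary-minimum argument combined with depth preservation at nearby depth-one points forces $U_j$ to vanish on $\{x_j=0\}$, whence $U_j = x_j W_j$ with $W_j>0$ and $h$ becomes a coordinate projection. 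Your approach costs more local analysis (the cone uniqueness argument and the first-order condition at boundary minima of $U_i$) but buys a stronger, explicit product structure $U_1 \simeq U \times W$ with $W$ a corner-free box, which makes your part (iii) and the smoothness of fibers in Corollary~\ref{cor.corners} immediate; the paper's path-lifting argument is softer and avoids any normal form for $h$ itself, at the price of a more delicate topological contradiction. Both treatments of (i) and (iii) are essentially the same. One small point worth making explicit in your write-up: the charts must be chosen so that $h$ maps the $M_1$-chart into the $M$-chart, so that $U_i \ge 0$ on the whole chart domain and the depth of $h(q)$ is read off as the number of vanishing $U_j(q)$ --- this is implicit in your setup and is harmless, but it is the hypothesis under which both the minimum argument and the ``exactly one vanishes'' count are valid.
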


\begin{proof}
We have already noticed that the depths of $x$ and $h(x)$ are the same
(Lemma \ref{lemma.depth}), so the rank of $M_1$, which is the maximum
of the depths of $x \in M_1$, is inferior or equal to the rank of
$M$. This proves (i).

Let us now prove (ii). Let $m_1 \in M_1$ be of depth $k$.  We can
choose a standard neighbourhood $W_1$ of $m_1$ in $M_1$ and a standard
neighborhood $W$ of $h(m_1)$ in $M$ such that $h(W_1) \subset
W$. Since our problem is local, we may assume that $M_1 = W_1 = [0,
  a)^k \times (-a, a)^{n_1-k}$ and $M = W = [0, b)^k \times (-b, b)^{n
      -k}$, $a,b > 0$, with $m_1$ and $h(m_1)$ corresponding to the
    origins. Note that both $M$ and $M_1$ will then be manifolds with
    corners of rank $k$; this is possible since $h$ preserves the
    depth, by Lemma \ref{lemma.depth}. We can then extend $h$ to a map
    $h_0 : Y_1 := (-a, a)^{n_1} \to \RR^{n}$ that is a (usual)
    submersion at $0 = m_1$ (not necessarily tame). By decreasing $a$,
    if necessary, we may assume that $h_0$ is a (usual) submersion
    everywhere and hence that $h_0(Y_1)$ is open in $\RR^{n}$. By
    standard differential geometry results, we can then choose an open
    neighborhood $V$ of $0 = h_0(m_1)$ in $\RR\sp{n}$ and an open
    neighborhood $V_1$ of $0 = m_1$ in $Y_1 := (-a, a)^{n_1}$ such
    that the restriction $h_1$ of $h_0$ to $V_1$ is a fibration $h_1:
    V_1 \to V$ with fibers diffeomorphic to $(-1, 1)\sp{n_1-n}$.  By
    further decreasing $V$ and $V_1$, we may assume that $V$ is an
    open ball centered at 0.

Next, we notice that $M \cap V$ consists of the vectors in $V$ that
have the first $k$ components $\ge 0$. By construction, we therefore
have that
\begin{equation*}
 h_1(M_1 \cap V_1) \, = \, h_0(M_1 \cap V_1) \ \subset \ M \cap V \, = \,
 \Big (\, [0, b)^k \times (-b, b)^{n -k} \, \Big ) \cap V \,.
\end{equation*}
Let $U_1 := M_1 \cap V_1$. We will show that we have in fact more,
namely, that we have
\begin{equation}\label{eq.one}
 U_1 \, = \, h_1\sp{-1}(M \cap V)\ \ \mbox{ and } \ \ 
 h_1(U_1) = M \cap V\,, 
\end{equation}
which will prove (ii) for $U_1 := M_1 \cap V_1$, since $h_1 : V_1 \to
V$ is a fibration with fibers diffeomorphic to $(-1, 1)\sp{n_1-n}$ and
$h(U_1) = h_1(U_1) = M \cap V$ is open in $M$.

Indeed, in order to prove the relations in Equation \eqref{eq.one},
let us notice that, since $h_1$ is surjective, it is enough to prove
that $U_1 = h_1\sp{-1}(M \cap V)$, since that will then give, right
away that $h_1(U_1) = M \cap V$. The relations in Equation
\eqref{eq.one} will be enough to complete the proof of (ii). Let us
assume then, by contradiction, that it is not true that $U_1 =
h_1\sp{-1}(M \cap V)$. This means that there exists $p = (p_i) \in V_1
\smallsetminus M_1$ such that $h_1(p) = h_0(p) \in M \cap V = \big(
[0, b)^k \times (-b, b)^{n -k} \big) \cap V$.  Let us choose $q =
  (q_i)$ in $M_1 \cap V_1$ of depth zero. That is, we assume that $q$
  is an interior point of $M_1 \cap V_1$.  Then the two points $h_1(p)
  = h_0(p)$ and $h_1(q) = h_0(q) = h(q)$ both belong to $M$, more
  precisely,
\begin{equation*}
 h_1(p), h_1(q) \, \in \, M \cap V \, = \, \big( [0, b)^k \times (-b,
   b)^{n -k} \big) \cap V \,,
\end{equation*}
which is the first octant in a ball.  Therefore $h_1(p)$ and $h_1(q)$
can be joined by a path $\gamma = (\gamma_i) : [0, 1] \to M \cap V$,
with $\gamma(1) = h_1(p)$ (and hence with $\gamma(0) = h_1(q)$).  All
paths are assumed to be continuous, by definition. Since $h$ preserves
the depth, $h_1(q) = h_0(q) = h(q)$ is moreover an interior point of
$M \cap V$.  Therefore we may assume that the path $\gamma(t)$
consists completely of interior points of $M$ for $t < 1$.

We can lift the path $\gamma$ to a path $\tilde \gamma : [0, 1] \to
V_1$ with $\tilde \gamma(0) = q$, $\tilde \gamma(1) = p$, $\gamma =
h_1 \circ \tilde \gamma$, since
\begin{equation*}
 h_1 := h_0\vert_{V_1} : V_1 \to V
\end{equation*}
is a fibration. We have $\tilde \gamma_i(0) = q_i> 0$ for $i = 1,
\ldots, k$, since $q = (q_i)$ is an interior point of $V_1 \cap M_1$.
On the other hand, since $p \notin M_1$, there exists at least one
$i$, $1 \le i \le k$, such that $\tilde \gamma_i(1) = p_i < 0$. Since
$\tilde \gamma_i(0) = q_i > 0$ and the functions $\tilde \gamma_j$ are
continuous, we obtain that the set
\begin{equation*}
 Z \ede \cup_{j=1}\sp{n} \tilde \gamma_j\sp{-1}(0) \, = \, \{\, t \in
 [0, 1], \ \mbox{ there exists } \ 1 \le j \le k \ \mbox{ such that }
 \ \tilde \gamma_j(t) = 0 \, \}
\end{equation*}
is closed and non-empty. Let $t_* = \inf Z \in Z$. Then $t_* > 0$
since $q = (q_i) = (\tilde \gamma_i(0))$ is of depth zero, meaning
that $\tilde \gamma_j(0)>0$ for $1 \le j \le k$, and hence that $0
\notin Z$.  Using again $\tilde \gamma_j(0)>0$, we obtain $\tilde
\gamma_i(s) > 0$ for all $0 \le s < t_*$, by the minimality of $t_*$,
since the functions $\tilde \gamma_j$ are continuous.  Hence $\tilde
\gamma(s) \in M_1 \subset Y_1$ for $s < t_*$.  (Recall that $h_0 : Y_1
:= (-a, a)^{n_1} \to \RR^{n}$ and that we are assuming $M_1 = [0,
  1)\sp{k} \times (-1, 1)\sp{n-k}$.)  We obtain that $\tilde
  \gamma(t_*) \in M_1 \cap V_1$, because $M_1$ is closed in $Y_1$.
  Therefore $t_* < 1$, because $p = \tilde \gamma(1) \notin M_1$.
  Since $\tilde \gamma_j(t_*) = 0$ for some $j$, we have that $\tilde
  \gamma(t_*)$ is a boundary point of $M_1$, and hence it has depth
  $>0$. Hence the depth of $\gamma(t_*) = h_0(\tilde \gamma(t_*)) =
  h(\tilde \gamma(t_*))$ is also $>0$ since $h$ preserves the
  depth. But this is a contradiction since $\gamma(t)$ was constructed
  to consist entirely of interior points for $t < 1$. This proves
  (ii).

The last part is a consequence of (ii), as follows. We use the same
notation as in the proof of (ii). We may assume $h\sp{-1}(L)$ to be
non-empty, because otherwise the statement is obviously true.  Let us
choose then $m_1 \in h\sp{-1}(L)$ and denote $m = h(m_1)$.  By the
statement (ii) just proved, there exit neighborhoods $U_1$ of $m_1$
and $U$ of $m$ such that the restriction of $h$ to $U_1$ induces a
fibration $h_2 := h\vert_{U_1} : U_1 \to U$. By decreasing $U_1$ and
$U$, if necessary, we can assume that the fibers of $h_2$ are
diffeomorphic to $(-1, 1)\sp{n - n\rp}$. Let $V_1$ be a standard
neighborhood of $m = h(m_1)$ in $L$. Then $h_2\sp{-1}(V_1)$ is a
standard neighborhood of $m_1$ in $h\sp{-1}(L)$. This completes the
proof of (iii) and, hence, also of the lemma.
\end{proof}

We shall use the above result in the following way:

\begin{corollary}\label{cor.corners}
Let $h : M_1 \to M$ be a tame submersion of manifolds with corners.
\begin{enumerate}[(i)]
\item $h$ is an open map.
\item The fibers $h\sp{-1}(m)$, $m \in M$, are smooth manifolds (that
  is, they have no corners).
\item Let us denote by $\Delta \in M \times M$ be the diagonal and by
  $h \times h : M_1 \times M_1 \to M \times M$ the product map $h
  \times h(m, m\rp) = (h(m), h(m\rp))$.  Then $(h \times
  h)\sp{-1}(\Delta)$ is a submanifold of $M_1 \times M_1$ of the same
  rank as $M_1$.
\end{enumerate} 
\end{corollary}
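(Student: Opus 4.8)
The plan is to reduce all three parts to the local fibration form of a tame submersion established in Lemma~\ref{lemma.corners}, together with Lemma~\ref{lemma.depth}; no new ideas are needed beyond these.

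For (i), I would use that openness of a map is a local condition on the source. Let $W \subseteq M_1$ be open and let $x = h(m_1)$ be a point of $h(W)$, with $m_1 \in W$. By Lemma~\ref{lemma.corners}(ii) there is an open neighborhood $U_1$ of $m_1$ on which $h$ restricts to a fibration $h\colon U_1 \to h(U_1)$ with $h(U_1)$ open in $M$. Since a fibration is an open map, $h(W \cap U_1)$ is open in $h(U_1)$, hence open in $M$, and it is a neighborhood of $x$ contained in $h(W)$. Thus $h(W)$ is open and $h$ is an open map. For (ii), I would apply Lemma~\ref{lemma.corners}(iii) to the one-point submanifold $L = \{m\} \subset M$: a point is a $0$-dimensional manifold without corners and is closed in $M$ (a manifold with corners is $T_1$), hence a submanifold of rank $0$, so Lemma~\ref{lemma.corners}(iii) gives that $h^{-1}(m) = h^{-1}(\{m\})$ is a submanifold of $M_1$ of rank $\le 0$; and a manifold with corners of rank $0$ is, by definition, a smooth manifold. (Alternatively, one reads off from the fibration form that $h^{-1}(m)$ is locally diffeomorphic to an open subset of $\RR^{n_1 - n}$.)

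For (iii), the first step is to check that $h \times h\colon M_1 \times M_1 \to M \times M$ is again a tame submersion. Its differential $dh \times dh$ is surjective, and the inward-pointing cone of a product of manifolds with corners is the product of the inward-pointing cones (immediate from the curve characterization of $T^{+}$), so the defining equality $(dh \times dh)^{-1}\bigl(T^{+}(M \times M)\bigr) = T^{+}(M_1 \times M_1)$ follows at once from the corresponding equality for $h$. The diagonal $\Delta$ is a submanifold of $M \times M$ (diffeomorphic to $M$), so Lemma~\ref{lemma.corners}(iii) already gives that $N := (h \times h)^{-1}(\Delta)$ is a submanifold of $M_1 \times M_1$, and it only remains to compute its rank. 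Here I would invoke the fibration form once more: given $(m,m') \in N$, pick by Lemma~\ref{lemma.corners}(ii) an open neighborhood $U'$ of $m'$ with $h|_{U'}\colon U' \to U_0$ a fibration onto an open set $U_0 \ni h(m') = h(m)$, trivialized (shrinking if necessary) as $U' \cong U_0 \times F$ with $F$ smooth, and an open neighborhood $U$ of $m$ in $M_1$ with $h(U) \subseteq U_0$. Then
\[
  N \cap (U \times U') \ \longrightarrow \ U \times F, \qquad (x,y) \longmapsto \bigl(x,\operatorname{pr}_F(y)\bigr),
\]
is a diffeomorphism, its inverse sending $(x,f)$ to the pair $(x,y)$ where $y \in U'$ corresponds to $(h(x),f) \in U_0 \times F$. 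Hence $N$ is locally diffeomorphic to $U \times F$ with $U$ open in $M_1$ and $F$ smooth, so the depth of $(m,m')$ in $N$ equals the depth of $m$ in $M_1$. Taking the supremum over $(m,m') \in N$ and using that $(m,m) \in N$ for every $m \in M_1$ gives $\operatorname{rank} N = \operatorname{rank} M_1$. (Equivalently, the first-factor projection restricts to a surjective tame submersion $N \to M_1$, and one applies Lemma~\ref{lemma.depth}.)

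The only step that is not entirely mechanical is this last local identification $N \cap (U \times U') \cong U \times F$ in part (iii); once it is in place, the depth (hence rank) statement follows immediately, while the submanifold statement and parts (i)--(ii) are direct consequences of Lemma~\ref{lemma.corners}.
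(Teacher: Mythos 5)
Your proof is correct and follows the same route as the paper, which simply cites Lemma~\ref{lemma.corners}(ii) for (i) and Lemma~\ref{lemma.corners}(iii) with $L=\{m\}$ and $L=\Delta$ for (ii) and (iii). You additionally supply two details the paper leaves implicit — the verification that $h\times h$ is again a tame submersion, and the local identification $N\cap(U\times U')\cong U\times F$ giving the \emph{exact} rank of $(h\times h)^{-1}(\Delta)$ (the cited lemma only yields an upper bound) — both of which are welcome and correct.
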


\begin{proof}
The first part follows from Lemma \ref{lemma.corners}(ii).  The second
and third parts follow from Lemma \ref{lemma.corners}(iii), by taking
$L = \{m\}$ for (ii) and $L = \Delta$ for (iii).
\end{proof}

We shall use the following conventions and notations.

\begin{notation}\label{not.Gamma}{\normalfont
If \m{E \to X} is a smooth vector bundle, we denote by \m{\Gamma(X;
  E)} (respectively, by \m{\Gamma_c(X;E)}) the space of smooth
(respectively, smooth, compactly supported) sections of
\m{E}. Sometimes, when no confusion can arise, we simply write
\m{\Gamma(E)}, or, respectively, \m{\Gamma_c(E)} instead of $\Gamma(X;
E)$, respectively $\Gamma_c(X; E)$. If $M$ is a manifold with corners,
we shall denote by
\begin{equation*}
 \maV_b(M) \, := \, \{ \, X \in \Gamma(M; TM), \, X \mbox{ tangent to
 all faces of } M \, \}
\end{equation*}
the set of vector fields on $M$ that are tangent to all faces of $M$
\cite{kottkeMelrose}.}
\end{notation}

For further reference, let us recall a classical result of Serre and
Swan \cite{karoubiBook}, which we formulate in the way that we will
use.

\begin{theorem}[Serre-Swan, \cite{karoubiBook}]\label{SS}
 Let $M$ be a compact Hausdorff manifold with corners and $\maV$ be a
 finitely generated, projective $\CI(M)$-module. Then there exists a
 real vector bundle $E_{\maV} \to M$, uniquely determined up to
 isomorphism, such that $\maV \simeq \Gamma(M; E_{\maV})$ as
 $\CI(M)$-module.  We can choose $E_{\maV}$ to depend functorially on
 $\maV$, in particular, any $\CI(M)$-module morphism $f : \maV \to
 \maW \simeq \Gamma(M; E_{\maW})$ induces a unique smooth vector
 bundle morphism $\tilde f : E_{\maV} \to E_{\maW}$ compatible with
 the isomorphisms $\maV \simeq \Gamma(M; E_{\maV})$ and $\maW \simeq
 \Gamma(M; E_{\maW})$.
\end{theorem}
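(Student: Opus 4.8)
The plan is to prove the Serre--Swan theorem for compact Hausdorff manifolds with corners by reducing it to the classical statement for compact Hausdorff spaces (or smooth manifolds without corners), treating the corner structure as a mild technical complication that does not affect the projective-module machinery. First I would recall that for a compact Hausdorff space $X$, finitely generated projective $C(X)$-modules are exactly the section modules of vector bundles (the topological Serre--Swan theorem), and that the smooth refinement — replacing $C(X)$ by $\CI(M)$ for a compact manifold $M$ — follows because smooth sections are dense in continuous sections and a smooth idempotent matrix over $\CI(M)$ has constant rank, hence defines a smooth subbundle of a trivial bundle. The only point to check is that nothing in this argument uses the absence of corners: partitions of unity subordinate to coordinate charts exist on compact manifolds with corners, smooth functions separate points, and a $\CI(M)$-linear projection $e \in M_N(\CI(M))$ with $e^2 = e$ still has locally constant rank on the (connected components of the) manifold with corners, so its image is a smooth subbundle $E_{\maV} \hookrightarrow M \times \RR^N$.

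Concretely, the key steps, in order, are: (1) Since $\maV$ is finitely generated and projective over $\CI(M)$, choose generators $v_1, \dots, v_N$ and write $\maV$ as a direct summand of the free module $\CI(M)^N$, so there is an idempotent $e = e^2 \in M_N(\CI(M))$ with $\maV \cong e \cdot \CI(M)^N$. (2) Define $E_{\maV} \subset M \times \RR^N$ to be the image of the bundle endomorphism of the trivial bundle $M \times \RR^N$ given fiberwise by $e(x) \in M_N(\RR)$; using that $e(x)$ is an idempotent matrix depending smoothly on $x$, its rank is locally constant, hence $E_{\maV}$ is a smooth subbundle, and $\Gamma(M; E_{\maV}) = e\cdot\Gamma(M; M\times\RR^N) = e\cdot\CI(M)^N \cong \maV$. (3) For uniqueness, if $\maV \cong \Gamma(M;E)$ and $\maV \cong \Gamma(M;E')$ then the $\CI(M)$-module isomorphism $\Gamma(M;E)\to\Gamma(M;E')$ is $\CI(M)$-linear, hence localizes and evaluates at each point to give a vector bundle isomorphism $E \to E'$ (here one uses that evaluation $\Gamma(M;E) \to E_x$ is surjective with kernel $\maI_x \Gamma(M;E)$, $\maI_x$ the ideal of functions vanishing at $x$, which holds on manifolds with corners just as in the smooth case via cutoff functions). (4) Functoriality: given a $\CI(M)$-module map $f : \maV \to \maW$, under the identifications it becomes a $\CI(M)$-linear map $\Gamma(M;E_{\maV}) \to \Gamma(M;E_{\maW})$; localizing and evaluating pointwise yields linear maps $(E_{\maV})_x \to (E_{\maW})_x$ assembling into a smooth bundle morphism $\tilde f$, and this assignment is clearly compatible with composition and identities.

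The one genuinely corner-sensitive ingredient is step (2): the claim that the image of a smooth idempotent $e(x) \in M_N(\RR)$ is a smooth subbundle over a manifold with corners. I would handle this the standard way: near a point $x_0$, the rank of $e(x)$ is lower semicontinuous and the rank of $1 - e(x)$ is also lower semicontinuous, and since their ranks sum to $N$ both are locally constant; then the usual argument — choosing a basis of $\operatorname{im}e(x_0)$ and pushing it by $e(x)$ to get a local smooth frame of $\operatorname{im}e(x)$ for $x$ near $x_0$ — works verbatim in a corner chart $[0,a)^k\times(-a,a)^{n-k}$ because it only uses continuity and smoothness of $e$, never the local Euclidean structure. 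Thus the subbundle is smooth in the manifold-with-corners sense (components smooth in every coordinate chart, as defined in Subsection~\ref{ssec.mc}).

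The main obstacle, such as it is, is purely expository rather than mathematical: one must be careful that all the pointwise-evaluation and localization arguments (the isomorphisms $\Gamma(M;E)/\maI_x\Gamma(M;E) \cong E_x$ and the existence of smooth cutoff functions supported in standard corner neighborhoods) are available in the corner setting, and that the resulting bundle maps are smooth in the sense appropriate to manifolds with corners. Since none of these steps interact nontrivially with faces or boundary strata, I do not expect any real difficulty; the proof is essentially a remark that the classical Serre--Swan argument is insensitive to corners, and I would present it as such, citing \cite{karoubiBook} for the core of the argument and only spelling out the idempotent-to-subbundle and pointwise-evaluation lemmas to the extent needed.
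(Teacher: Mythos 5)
Your proof is correct, but there is nothing in the paper to compare it against: Theorem~\ref{SS} is stated there as a recalled classical result, attributed to \cite{karoubiBook}, and no proof is given. Your argument is the standard idempotent-splitting proof of Serre--Swan (essentially the one in Karoubi's book), and the points you single out as the only corner-sensitive ones are indeed the right ones: that a smooth idempotent $e\in M_N(\CI(M))$ has locally constant rank (which you could also get from $\operatorname{rank} e(x)=\operatorname{tr} e(x)$ being continuous and integer-valued), so that $\operatorname{im}e$ is a smooth subbundle of $M\times\RR^N$ via the usual pushed-frame argument in a corner chart, and that the evaluation isomorphism $\Gamma(M;E)/\maI_x\Gamma(M;E)\simeq E_x$ survives because cutoff functions and partitions of unity exist on compact manifolds with corners. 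Spelling these out is a genuine (if modest) addition relative to the paper, which simply takes the corner case for granted; the only thing I would add for completeness is the converse observation, implicitly used when one identifies $\maW\simeq\Gamma(M;E_{\maW})$, that $\Gamma(M;E)$ is itself finitely generated projective for any vector bundle $E$ over a compact manifold with corners (embed $E$ as a summand of a trivial bundle using a finite cover by standard charts and a subordinate partition of unity).
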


In particular, there exists a (unique up to isomorphism) vector bundle
$T\sp{b}M$ such that $\Gamma(T\sp{b}M) \simeq \maV_b(M)$ as
$\CI(M)$-modules \cite{kottkeMelrose}, where $\maV_b$ is as introduced
in \ref{not.Gamma}.

\subsection{Definition of Lie groupoids and Lie algebroids}
\label{ssec.dLgLa} Recall that a {\em groupoid} $\maG$ is a small category 
in which every morphism is invertible. The class of objects of $\maG$,
denoted $\maG\sp{(0)}$, is thus a set. For convenience, we shall
denote $M := \maG\sp{(0)}$. The set of morphisms $\maG :=
\maG\sp{(1)}$ is thus also a set.

One typically thinks of a groupoid in terms of its structural
morphisms.  Thus the domain and range of a morphism give rise to maps
$d, r : \maG \to M$. We shall therefore write $d, r: \maG \tto M$ (or,
simply, $\maG \tto M$) for a groupoid with units $M$. We shall denote
by $\mu(g, h) = gh$ the composition of two composable morphisms $g$
and $h$, that is, the composition of two morphisms satisfying $d(g) =
r(h)$ and by
\begin{equation}
 \maG\sp{(2)} \, := \, \{(g, h) \in \maG \times \maG, \ d(g) = r(h) \}
\end{equation}
the domain of the composition map $\mu$. Let us notice that, by
Corollary \ref{cor.corners}, the set $\maG\sp{(2)}$ is a manifold with
corners whenever $M$ and $\maG$ are manifolds with corners and $d$ and
$r$ are tame submersions of manifolds with corners.

The objects of $\maG$ will also be called {\em units} and the
morphisms of $\maG$ will also be called {\em arrows}. To the groupoid
$\maG$ there are also associated the inverse map $i(g) = g\sp{-1}$ and
the embedding $u : M \to \maG$, which associates to each object its
identity morphism. If $M$ and $\maG$ are manifolds with corners and
$i$ is smooth and $d$ is a tame submersion of manifolds with corners,
then $r$ is also a tame submersion of manifolds with corners.

For simplicity, we typically write $gh := \mu(g,h)$.  The structural
morphism $d, r, \mu, i, u$ will satisfy the following conditions
\cite{buneciSurvey, MackenzieBook2, MoerdijkMrcun02}:
\begin{enumerate}
 \item $g_1(g_2 g_3) = (g_1 g_2) g_3$ for any $g_i \in \maG$ such that
   $d(g_i) = r(g_{i+1})$.
 \item $g u(d(g)) = g$ and $u(r(g)) g = g$ for any $g \in \maG$.
 \item $g i(g) = u(r(g))$ and $i(g) g = u(d(g))$ for any $g \in \maG$.
\end{enumerate}

Recall then

\begin{definition} A {\em Lie groupoid} is a groupoid $\maG \tto M$ such that
\begin{enumerate}
  \item $M$ and $\maG$ are manifolds (possibly with corners), with $M$
    Hausdorff,
  \item the structural morphisms $d, r, i, u$ are smooth,
  \item $d$ is a tame submersion of manifolds with corners (so
    $\maG\sp{(2)}$ is a manifold) and $\mu : \maG\sp{(2)} \to \maG$ is
    smooth.
\end{enumerate}
\end{definition}

Note that we do not assume $\maG = \maG\sp{(1)}$ to be Hausdorff,
although that will be the case for most groupoids considered in this
paper.  Lie groupoids were introduced by Ehresmann. See
\cite{MackenzieBook2} for a comprehensive introduction to the subject
as well as for more references.  Note that $\maG$ is {\em not}
required to be Hausdorff, as this will needlessly remove a large class
of important examples, such as the ones arising in the study of
foliations \cite{ConnesBook}.

We are interested in Lie groupoids since many operators of interest
have distribution kernels that are naturally defined on a Lie
groupoid. Let us see how this is achieved in the case of regularizing
operators. Let $\maG \tto M$ be a Lie groupoid and let us choose a
metric on $A(\maG)$.  We can use this metric and the projections $r :
\maG_x \to M$ that satisfy $T\maG_x \simeq r\sp{*}(A(\maG))$ to obtain
a family of metrics $g_x$ on $\maG_x$. By constructions, these metrics
will be right invariant.  Whenever integrating on a set of the form
$\maG_x$, $x \in M$, we shall do that with respect to the volume form
associated to $g_x$. Let us assume, for simplicity that $\maG$ is
Hausdorff. We then define a convolution product on $\CIc(\maG)$ by the
formula
\begin{equation}\label{eq.def.convolution}
 \phi \ast \psi (g) \ede \int_{\maG_{d(g)}} \phi(gh\sp{-1})
 \psi(h)dh\,.
\end{equation}

A {\em subgroupoid} of a groupoid $\maG$ is a subset $\maH$ such that
the structural morphisms of $\maG$ induce a groupoid structure on
$\maH$.  We shall need the notion of a {\em Lie subgroupoid} of a Lie
groupoid, which is closely modeled on the definition in
\cite{MackenzieBook2}. Recall that if $M$ is a manifold with corners
and $L \subset M$ is a subset, we say that $L$ is a submanifold of $M$
if it is a {\em closed} subset, if it is a manifold with corners in
its own with for topology induced from $M$, and if the inclusion $L
\to M$ is smooth and has injective differential.

\begin{definition}\label{def.subgroupoid}
Let $\maG \tto M$ be a Lie groupoid. A Lie groupoid $\maH \tto L$ is a
{\em Lie subgroupoid} of $\maG$ if $L$ is a submanifold of $M$ and
$\maH$ is a submanifold of $\maG$ with the groupoid structural maps
induced from $\maG$. (So $L$ and $\maH$ are closed subsets, according
to our conventions.)
\end{definition}

%\subsection{Lie algebroids}

Lie groupoids generalize Lie groups. By analogy, a Lie groupoid $\maG$
will have an associated infinitesimal object $A(\maG)$, the \dlp Lie
algebroid associated to to $\maG$.\drp\ To define it, let us first
recall the definition of a Lie algebroid. See
Pradines\rp\ \cite{Pradines} for the original definition and
Mackenzie\rp s books \cite{MackenzieBook2} a comprehensive
introduction to their general theory.

\begin{definition} \label{def.Lie.alg}
A {\em Lie algebroid} \m{A \to M} is a real vector bundle over a
Hausdorff manifold with corners \m{M} together with a {\em Lie algebra}
structure on \m{\Gamma(M;A)} (with bracket \m{[\ , \ ]}) and a vector
bundle map \m{\varrho: A \rightarrow TM}, called {\em anchor}, such
that the induced map \m{\varrho_* : \Gamma(M; A) \to \Gamma(M; TM)}
satisfies the following two conditions:
\begin{enumerate}[(i)]
\item \m{\varrho_*([X,Y]) = [\varrho_*(X),\varrho_*(Y)]} and
\item \m{[X, fY] = f[X,Y] + (\varrho_*(X) f)Y}, for all \m{X, Y \in
  \Gamma(M; A)} and \m{f \in \CI(M)}.
\end{enumerate}
\end{definition}

Morphisms of Lie algebroids are tricky to define in general (see for
instance 4.3.1 \cite{MackenzieBook2}), but we will need only special
cases. The isomorphisms are easy.  Two groupoids $A_i \to M_i$ are
{\em isomorphic} if there exists a vector bundle isomorphism $\phi :
A_1 \to A_2$ that preserves the corresponding Lie brackets.  If $M_1 =
M_2 = M$, we will consider morphisms {\em over $M$}.  (Often, however,
this \dlp over $M$\drp\ will be omitted.) Unless explicitly stated otherwise,
an isomorphism of two Lie algebroids will induced the {\em identity}
on the base, with the exception when this isomorphism comes from the 
action of a given Lie group. The same convention applies to the 
isomorphisms of Lie groupoids.

\begin{definition}\label{def.morphism}
 Let $A_i \to M$ be two Lie algebroids. A {\em morphism over $M$} of
 $A_1$ to $A_2$ is a vector bundle morphism $\phi : A_1 \to A_2$ that
 induces the identity over $M$ and is compatible with the anchor maps
 and the Lie brackets.
\end{definition}

More precisely, the map $\phi$ of this definition satisfies
$\varrho(\phi(X)) = \varrho(X)$ and $\varrho([X, Y]) = [\varrho(X),
  \varrho(Y)]$ for all sections $X$ and $Y$ of $A_1$.  See 3.3.1 of
\cite{MackenzieBook2}.

The following simple remark will be useful in the proof of Theorem
\ref{thm.tame2}.

\begin{lemma}\label{lemma.rem.prod}
 Let $A \to M$ be a Lie algebroid and $f \in \CI(M)$ be such that
 $\{f=0\}$ has an empty interior. Then $f \Gamma(M; A) \subset
 \Gamma(M; A)$ is a finitely generated, projective module and a Lie
 subalgebra. Thus there exists a Lie algebroid, denoted $fA$, such
 that $\Gamma(fA) := \Gamma(M; fA) \simeq f\Gamma(A)$.
\end{lemma}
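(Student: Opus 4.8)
The plan is to verify separately that $f\Gamma(M;A)$ is a finitely generated projective $\CI(M)$-module and that it is closed under the Lie bracket, and then to produce $fA$ by transporting the Lie algebroid structure of $A$ along a module isomorphism. For the module part, I would consider the $\CI(M)$-linear map $m_f\colon\Gamma(M;A)\to f\Gamma(M;A)$, $m_f(X)=fX$; it is surjective by the definition of $f\Gamma(M;A)$, and it is injective because $fX=0$ forces $X$ to vanish on the dense set $\{f\neq0\}$ --- dense precisely because $\{f=0\}$ has empty interior --- hence $X=0$ by continuity. Thus $m_f$ is an isomorphism of $\CI(M)$-modules, and since $\Gamma(M;A)$ is finitely generated and projective (being the sections of a vector bundle), so is $f\Gamma(M;A)$. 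This injectivity step is the only place where the hypothesis on $\{f=0\}$ is used, and it is the one point I expect to require genuine care.

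For the Lie subalgebra claim, applying the Leibniz rule of Definition \ref{def.Lie.alg} (and its skew-symmetric form $[fX,Y]=f[X,Y]-(\varrho_*(Y)f)X$) gives, for $X,Y\in\Gamma(M;A)$,
\begin{equation*}
 [fX,\,fY] \, = \, f\big(\, f[X,Y] + (\varrho_*(X)f)\,Y - (\varrho_*(Y)f)\,X \,\big) \, \in \, f\Gamma(M;A)\,,
\end{equation*}
so $f\Gamma(M;A)$ is closed under $[\ ,\ ]$ and hence is a Lie subalgebra of $(\Gamma(M;A),[\ ,\ ])$; being moreover a $\CI(M)$-submodule, it also absorbs the anchor terms $(\varrho_*(fX)g)\,fY$ occurring in the corresponding Leibniz identity.

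Finally, to build $fA$ one may use $m_f$ to transport everything onto $A$ itself: take $fA:=A$ as a vector bundle, with anchor $f\varrho$ and bracket $[X,Y]_{fA}:=f[X,Y]+(\varrho_*(X)f)Y-(\varrho_*(Y)f)X$ (the pullback of $[\ ,\ ]$ on $f\Gamma(M;A)$ under $m_f$, well defined thanks to the displayed identity and the injectivity of $m_f$). Skew-symmetry and the Leibniz rule for $[\ ,\ ]_{fA}$ follow at once from the formula, anchor--bracket compatibility reduces to that in $A$, and the Jacobi identity for $[\ ,\ ]_{fA}$ transfers from that of $[\ ,\ ]$ through the linear isomorphism $m_f$; hence $fA$ is a Lie algebroid and $m_f$ exhibits $\Gamma(M;fA)\simeq f\Gamma(M;A)$ compatibly with anchors and brackets. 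Equivalently, one may invoke the Serre--Swan theorem (Theorem \ref{SS}) on the finitely generated projective module $f\Gamma(M;A)$ to obtain an intrinsic bundle $fA$, with the anchor transported via the functoriality clause of Theorem \ref{SS}. All of these verifications are routine once the identity for $[fX,fY]$ is available.
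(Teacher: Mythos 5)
Your proof is correct and rests on the same key computation as the paper's: the identity $[fX,fY]=f^{2}[X,Y]+f(\varrho_*(X)f)Y-f(\varrho_*(Y)f)X$, which is all the paper itself records. The additional steps you supply (injectivity of $m_f$ from the empty-interior hypothesis, hence projectivity, and the explicit construction of $fA$) are exactly the routine verifications the paper leaves implicit, and they are carried out correctly.
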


\begin{proof} The proof of the Lemma relies on two simple calculations,
which nevertheless will be useful in what follows.  Let $X, Y \in
\Gamma(A) := \Gamma(M; A)$. We have
 \begin{equation}\label{eq.with.f}
  [fX, fY] \, = \, f X(f)Y - f Y(f)X + f\sp{2}[X, Y] \, \in \, \Gamma(fA) \,.
 \end{equation}
 The proof is complete.
\end{proof}

Recall the following definition (see \cite{MackenzieBook2, Rinehart}).

\begin{definition}\label{def.LieRinehart}
Let $R$ be a commutative associative unital real algebra and let
$\mfkg$ be a Lie algebra and an $R$-module such that $\mfkg$ acts by
derivations on $R$ and the Lie bracket satisfies
\begin{equation*}
 [X, rY] \, = \, r[X, Y] + X(r)Y\,, \quad \mbox{ for all }\ r \in R
 \ \mbox{ and }\ X, Y \in \mfkg\,.
\end{equation*}
Then we say that $\mfkg$ is an {\em $R$-Lie-Rinehart algebra}.
\end{definition}

Let $M$ be a compact manifold with corners.  We thus see that the
category of Lie algebroids with base $M$ is equivalent to the category
of finitely-generated, projective $\CI(M)$-Lie-Rinehart algebras, by
the Serre-Swan Theorem, Theorem \ref{SS}. It is useful in Analysis to
think of Lie algebroids as comming from Lie-Rinehart algebras.

We now recall some basic constructions involving Lie algebroids. See
\cite{MackenzieBook2} for more details.  For further
reference, let us introduce here the {\em isotropy} of a Lie
algebroid.

\begin{definition}\label{def.isotropy}
Let \m{\varrho : A \to TM} be a Lie algebroid on \m{M} with anchor
\m{\varrho}. Then the kernel \m{\ker (\varrho_x : A_x \to T_x M)} of
the anchor is the {\em isotropy} of \m{A} at \m{x \in M}.
\end{definition}

The isotropy at any point can be shown to be a Lie algebra.

\subsection{Direct products and pull-backs of Lie algebroids}
For the purpose of proving Theorems \ref{thm.tame} and
\ref{theorem.blow-up} below, we need a good understanding of thick
pull-back Lie algebroids and of their relation to vector
pull-backs. We thus recall the definition of the thick pull-back of a
Lie algebroid and of the direct product of two Lie algebroids. More
details can be found in \cite{MackenzieBook2}, however, we use a
simplified approach that is enough for our purposes. We therefore
adapt accordingly our notation and terminology. For instance, we
shall use the term \dlp thick pull-back of Lie algebroids\drp\ (as in
\cite{aln1}) in order to avoid confusion with the ordinary
(i.e. vector bundle) pull-back, which will also play a role. For
example, vector pull-backs appear in the next lemma, Lemma
\ref{lemma.prod1}, which states that a constant family of Lie
algebroids defines a new Lie algebroid. We first make the following
observations.

\begin{lemma}\label{lemma.prod1} 
Let $A_2 \to M_2$ be a vector bundle and $M_1$ be another manifold.
Let $A := p_2\sp{*}(A_2)$ be the vector bundle pull-back of $A_2$ to
the product $M_1 \times M_2$ via projection $p_2 : M_1 \times M_2 \to
M_2$. If $A_2 \to M_2$ is a Lie algebroid, then $A \to M_1 \times M_2$
is also Lie algebroid with $[f \otimes X, g \otimes Y] = fg \otimes [X
  , Y]$ for all $f, g \in \CI(M_1)$ and $X, Y \in \Gamma(A_2)$.
\end{lemma}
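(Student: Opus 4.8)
\emph{Proof plan.} The plan is to equip $A := p_2^*(A_2)$ with an anchor and a Lie bracket by transporting the structure of $A_2$ slice-wise in the $M_1$-direction, and then to reduce the two conditions of Definition~\ref{def.Lie.alg} to the corresponding conditions for $A_2$. For the anchor I use the canonical splitting $T(M_1 \times M_2) = p_1^*(TM_1) \oplus p_2^*(TM_2)$ and set $\varrho_A := \iota_2 \circ p_2^*(\varrho_{A_2}) : p_2^*(A_2) \to p_2^*(TM_2) \hookrightarrow T(M_1\times M_2)$, where $\varrho_{A_2}$ is the anchor of $A_2$ and $\iota_2$ is the inclusion of the second summand. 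Concretely, the vector field associated with a pull-back section $p_2^*X$, $X \in \Gamma(M_2;A_2)$, is the slice-wise field $(m_1,m_2) \mapsto (\varrho_{A_2})_*(X)(m_2) \in T_{m_2}M_2 \subset T_{(m_1,m_2)}(M_1\times M_2)$; in particular it annihilates every function pulled back from $M_1$.

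For the bracket I first declare $[\,p_2^*X, p_2^*Y\,] := p_2^*([X,Y])$ on pull-back sections and then extend it by $\RR$-bilinearity, antisymmetry, and the Leibniz rule~(ii) of Definition~\ref{def.Lie.alg} with respect to $\varrho_A$, which for $h,g \in \CI(M_1\times M_2)$ forces
\[
  [\, h\,p_2^*X,\ g\,p_2^*Y\,] \ede hg\, p_2^*([X,Y]) \;+\; h\,\big((\varrho_A)_*(p_2^*X)\, g\big)\, p_2^*Y \;-\; g\,\big((\varrho_A)_*(p_2^*Y)\, h\big)\, p_2^*X \,.
\]
When $h = f \in \CI(M_1)$ and $g \in \CI(M_1)$ the last two terms vanish by the remark above, and one recovers the formula in the statement, $[f\otimes X, g\otimes Y] = fg \otimes [X,Y]$. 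Two points must be checked here. First, that the sections $p_2^*X$ generate $\Gamma(M_1\times M_2; A)$ as a $\CI(M_1\times M_2)$-module: this holds because $A_2$ is a direct summand of a trivial bundle, so $\Gamma(M_2;A_2)$ is finitely generated over $\CI(M_2)$, and pulling a finite generating set back by $p_2$ gives one for $\Gamma(M_1\times M_2; A)$ (cf.\ Theorem~\ref{SS}). Second, and this is the only non-formal point, that the Leibniz extension does not depend on how a section is expressed in terms of the $p_2^*X$: the relations among these generators are exactly the $\CI(M_2)$-linearity relations $p_2^*(\phi)\,p_2^*(X) = p_2^*(\phi X)$, $\phi \in \CI(M_2)$, and compatibility of the displayed formula with them is a short computation using condition~(ii) for $A_2$ together with $(\varrho_A)_*(p_2^*X)(p_2^*\phi) = p_2^*\big((\varrho_{A_2})_*(X)\phi\big)$.

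It then remains to verify the two conditions of Definition~\ref{def.Lie.alg}. Because $[\cdot,\cdot]$ is $\RR$-bilinear, antisymmetric and satisfies the Leibniz rule, the anchor defect $(U,V) \mapsto (\varrho_A)_*([U,V]) - [(\varrho_A)_*U,(\varrho_A)_*V]$ is $\CI(M_1\times M_2)$-bilinear, and once it vanishes the Jacobiator $[[U,V],W] + [[V,W],U] + [[W,U],V]$ is $\CI(M_1\times M_2)$-trilinear; hence both vanish on all sections as soon as they vanish on the generators $p_2^*X$. On such sections the anchor defect is $\iota_2 \circ p_2^*$ of the anchor defect of $A_2$, which is zero — here one uses that the Lie bracket of two slice-wise fields pulled back from $M_2$ is again of this form and is computed slice-wise — and the Jacobiator on $p_2^*X, p_2^*Y, p_2^*Z$ is $p_2^*$ of the Jacobi identity in $\Gamma(M_2;A_2)$. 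This proves the lemma; I expect the genuine obstacle to be the well-definedness check of the previous paragraph, everything else being pull-back bookkeeping.
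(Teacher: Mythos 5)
Your proof is correct and is essentially the argument the paper has in mind: the paper's own proof is just ``this follows from definitions,'' with the slice-wise anchor and bracket made explicit in Remark~\ref{rem.LR.explicit}, and your write-up supplies exactly the omitted bookkeeping (the forced Leibniz extension, its well-definedness over the relations $p_2\sp{*}(\phi)p_2\sp{*}(X)=p_2\sp{*}(\phi X)$, and the tensoriality of the anchor defect and Jacobiator). No gaps.
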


\begin{proof}
 This follows from definitions.
\end{proof}

\begin{remark}\label{rem.LR1}
A slight generalization of Lemma \ref{lemma.prod1} would be that if
$\mfk g$ is an $R$-Lie-Rinehart algebra and $R_1$ is another ring,
then $R \otimes \mfk g$ (tensor product over the real numbers) is an
$R_1 \otimes R$-Lie-Rinehart algebra, except that, in our case, we are
really considering also completions (of $R_1 \otimes R$ and of $R_1
\otimes \mfk g$) with respect to the natural topologies.
\end{remark}

We now make the Lie algebroid structure in Lemma \ref{lemma.prod1} more
explicit.

\begin{remark}\label{rem.LR.explicit}
 Let us identify $\Gamma(M_1 \times M_2; A) \simeq \CI(M_2 ;
 \Gamma(M_1; A_1))$. Then the Lie bracket on the space of sections of
 the $A \to M_1 \times M_2$ of Lemma \ref{lemma.prod1} is given by
\begin{equation*}
  [X, Y](m) \, := \, [X(m), Y(m)] \,,
\end{equation*}  
where $m \in M$ and $X, Y \in \Gamma(M_1 \times M_2; A) \simeq \CI(M_2
; \Gamma(M_1; A_1))$, so that the evaluations $X(m), Y(m) \in
\Gamma(M_1; A)$ are defined.  The anchor is
\begin{equation*}
 \varrho : A \, \to \, p_1\sp{*}(TM_1) = TM_1 \times M_2 \subset T(M_1
 \times M_2) \,.
\end{equation*}
\end{remark}

We now introduce products of Lie algebroids \cite{MackenzieBook2}
(our notation is slightly different from the one in that book).

\begin{corollary}\label{cor.prod1}
 Let $A_i \to M_i$, $i=1, 2$, be Lie algebroids and let
 $p_1\sp{*}(A_1)$ and $p_2\sp{*}(A_2)$ be their vector bundle
 pull-backs to $M_1 \times M_2$ (introduced in Lemma
 \ref{lemma.prod1}) with their natural Lie algebroid structures.  Then
\begin{equation*}
  A_1 \boxtimes A_2 \ede p_1\sp{*}(A_1) \oplus p_2\sp{*}(A_2) \simeq
  A_1 \times A_2 \to M_1 \times M_2
\end{equation*}
 has a natural Lie algebroid structure $A_1 \boxtimes A_2 \to M_1
 \times M_2$ such that $\Gamma(M_1; A_1)$ and $\Gamma(M_2; A_2)$
 commute in $\Gamma(M_1 \times M_2; A_1 \boxtimes A_2)$.  We notice
 that $\Gamma(M_1 \times M_2; p_i\sp{*}(A_i))$ is thus a sub Lie
 algebra of $\Gamma(M_1 \times M_2; A_1 \boxtimes A_2)$, $i = 1, 2$.
\end{corollary}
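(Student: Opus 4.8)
The plan is to reduce the statement to local product trivializations together with the Lie-Rinehart algebra viewpoint of Definition~\ref{def.LieRinehart} and Remark~\ref{rem.LR1}. The underlying bundle is already given, $A_1 \boxtimes A_2 = p_1\sp{*}(A_1) \oplus p_2\sp{*}(A_2)$, so the task is to equip its space of sections with an anchor and a Lie bracket satisfying the axioms of Definition~\ref{def.Lie.alg}. For the anchor: by Lemma~\ref{lemma.prod1} and Remark~\ref{rem.LR.explicit}, the Lie algebroids $p_i\sp{*}(A_i) \to M_1 \times M_2$ have anchors taking values, respectively, in the subbundles $p_1\sp{*}(TM_1)$ and $p_2\sp{*}(TM_2)$ of $T(M_1 \times M_2)$; since these subbundles are complementary and sum to $T(M_1 \times M_2)$, the two anchors combine into a bundle map $\varrho \ede \varrho_1 \oplus \varrho_2 : p_1\sp{*}(A_1) \oplus p_2\sp{*}(A_2) \to T(M_1 \times M_2)$, which I take to be the anchor of $A_1 \boxtimes A_2$.

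To build the bracket, I would cover $M_1 \times M_2$ by products $V_1 \times V_2$ of charts over which $A_1$ and $A_2$ are trivial, with local frames $(e\sp{(1)}_{\alpha})$ over $V_1$ and $(e\sp{(2)}_{\beta})$ over $V_2$; their pull-backs form a local frame of $A_1 \boxtimes A_2$ over $V_1 \times V_2$. On this frame I set $[e\sp{(i)}_{\alpha}, e\sp{(i)}_{\gamma}]$ equal to (the pull-back of) the $A_i$-bracket and $[e\sp{(1)}_{\alpha}, e\sp{(2)}_{\beta}] \ede 0$ --- this is where the requested commutation is imposed --- and extend to all local sections by skew-symmetry and the Leibniz rule (ii) of Definition~\ref{def.Lie.alg} relative to $\varrho$. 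Because the chosen sections form a local frame, this extension is well defined with no consistency condition to check; one then verifies, routinely, that it is skew-symmetric and satisfies the Leibniz rule for all local sections, and that it is independent of the chosen product trivialization --- here using that the transition functions of $A_i$ are pulled back from $M_i$, hence constant along the other factor and compatible with the vanishing mixed brackets --- so that the local brackets glue to a bracket on $\Gamma(M_1 \times M_2; A_1 \boxtimes A_2)$.

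The steps carrying the real content are condition~(i) of Definition~\ref{def.Lie.alg} and the Jacobi identity, and I would establish (i) before the Jacobi identity, since the Jacobiator is a tensor only once (i) holds. The discrepancy $\varrho_*([X,Y]) - [\varrho_*(X), \varrho_*(Y)]$ is $\CI(M_1 \times M_2)$-bilinear in $(X,Y)$, so it suffices to check it vanishes on frame sections: on two frame sections of the same factor this is condition~(i) for $A_i$, while $\varrho(e\sp{(1)}_{\alpha})$ and $\varrho(e\sp{(2)}_{\beta})$ are pull-backs of vector fields living on the two different factors, hence commute, which matches $\varrho([e\sp{(1)}_{\alpha}, e\sp{(2)}_{\beta}]) = \varrho(0) = 0$. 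With (i) in hand, the Jacobiator of $[\ ,\ ]$ is totally antisymmetric and $\CI(M_1 \times M_2)$-trilinear, so the Jacobi identity likewise reduces to frame sections; it holds within each factor by the Jacobi identity for $A_i$, and in a mixed configuration the sole a priori nonzero term, say $[[e\sp{(1)}_{\alpha}, e\sp{(1)}_{\gamma}], e\sp{(2)}_{\beta}]$, vanishes because $[e\sp{(1)}_{\alpha}, e\sp{(1)}_{\gamma}]$ is a $\CI(V_1)$-combination of the $e\sp{(1)}_{\delta}$ and $\varrho(e\sp{(2)}_{\beta})$ annihilates functions pulled back from $V_1$. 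This last point --- the derivation of $\CI(M_1 \times M_2)$ induced by a section of $p_1\sp{*}(A_1)$ commutes with the one induced by a section of $p_2\sp{*}(A_2)$ --- is exactly the \dlp commuting modules\drp\ phenomenon of Remark~\ref{rem.LR1}, and is the main obstacle, in the sense that it is the one place where the product structure is genuinely used.

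Assembling these, $A_1 \boxtimes A_2 = p_1\sp{*}(A_1) \oplus p_2\sp{*}(A_2) \simeq A_1 \times A_2$ with anchor $\varrho$ and the constructed bracket is a Lie algebroid over $M_1 \times M_2$, and by construction the pull-backs of $\Gamma(M_1; A_1)$ and $\Gamma(M_2; A_2)$ commute inside $\Gamma(M_1 \times M_2; A_1 \boxtimes A_2)$. Finally, each $\Gamma(M_1 \times M_2; p_i\sp{*}(A_i))$ is a Lie subalgebra: it is spanned over $\CI(M_1 \times M_2)$ by the Lie subalgebra $\Gamma(M_i; A_i)$ and the bracket restricted to it coincides with the one already provided by Lemma~\ref{lemma.prod1}, hence it is closed under the bracket. (Alternatively, one may simply invoke the construction of the direct product of Lie algebroids in~\cite{MackenzieBook2} and only verify these two commutation assertions, which are then immediate.)
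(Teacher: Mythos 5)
Your proposal is correct. The paper itself offers no proof of this corollary---it treats the direct product structure as immediate from Lemma~\ref{lemma.prod1} and a citation of Mackenzie's book---and your construction (anchor $\varrho_1\oplus\varrho_2$, bracket determined on pulled-back frames by declaring mixed brackets zero, extension by the Leibniz rule, and tensoriality arguments reducing anchor-compatibility and Jacobi to frame sections, with the mixed Jacobi term killed because $\varrho(e^{(2)}_{\beta})$ annihilates functions pulled back from $M_1$) is exactly the standard argument the paper implicitly relies on, carried out in full.
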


The Lie algebroid $A_1 \boxtimes A_2$ just defined is called the {\em
  direct product Lie algebroid} (see, for instance,
\cite{MackenzieBook2}) and is thus isomorphic, as a vector bundle, to
the product $A_1 \times A_2 \to M_1 \times M_2$. We shall need the
following important related construction.

\begin{definition}\label{def.thick-pb}
 Let $A \to L$ be a Lie algebroid over $L$ with anchor $\varrho: A \to
 TL$. Let $f : M \to L$ be a smooth map and define as in \cite[pages
   202--203]{HigginsMackenzie1}
 \begin{equation*}
  A \oplus_{TL} TM \, := \, \{\, (\xi, X) \in A \times TM, \,
  \varrho(\xi) = df(X) \in TL \, \}\,.
 \end{equation*}
Assume $A \oplus_{TL} TM$ defines a smooth vector bundle over $M$.
Then we define the {\em thick pull-back} Lie algebroid of $A$ by $f$
by $f\pullback (A) := A \oplus_{TL} TM$.
\end{definition}

As we will see shortly, it is easy to see that if $f$ is a 
tame submersion of manifolds with corners, then $f\pullback(A)$ is
defined. We shall use Lemma \ref{lemma.corners}(ii) to reduce to the
case of products, which we treat first.
% % 

\begin{lemma}\label{lemma.res.pb}
Let $A \to L$ be a Lie algebroid over a manifold with corners $L$ and
let $Y$ be a smooth manifold. If $f$ denotes the projection $L \times
Y \to L$, then
\begin{equation*} %\label{eq.product}
 f\pullback(A) \, \simeq \, A \boxtimes TY \, \simeq \, f\sp{*}(A)
 \oplus (L \times TY) \,,
\end{equation*}
the first isomorphism being an isomorphism of Lie algebroids and the
second isomorphism being simply an isomorphism of vector bundles.
\end{lemma}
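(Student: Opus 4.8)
The plan is to unwind the definition of the thick pull-back in the special case of a projection and exhibit the claimed vector bundle identification explicitly, then check that under this identification the Lie bracket on sections matches the one on $A \boxtimes TY$. First I would write $f : L \times Y \to L$ for the projection and compute $f\pullback(A) = A \oplus_{TL} TL\times TY$ fibrewise. At a point $(\ell, y)$, the differential $df_{(\ell,y)} : T_\ell L \times T_y Y \to T_\ell L$ is just the first projection, so the fibre is $\{(\xi, (v, w)) \in A_\ell \times (T_\ell L \times T_y Y) : \varrho(\xi) = v\}$. The condition $\varrho(\xi) = v$ pins down $v$ in terms of $\xi$, leaving $\xi \in A_\ell$ and $w \in T_y Y$ free; hence the fibre is canonically isomorphic to $A_\ell \times T_y Y$, which is exactly the fibre of $f\sp{*}(A) \oplus (L \times TY)$ and, by Corollary \ref{cor.prod1}, of $A \boxtimes TY$ restricted to $L \times \{y\}$. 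This establishes the vector bundle isomorphism; smoothness is automatic because $\varrho$ is a smooth bundle map and the projection $(\xi, (v,w)) \mapsto (\xi, w)$ is a smooth left inverse to $\xi \oplus w \mapsto (\xi, (\varrho(\xi), w))$. In particular this shows $f\pullback(A)$ is indeed a well-defined Lie algebroid in this case, as promised in the remark preceding the lemma.

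The second step is to verify that the bracket transported from $f\pullback(A)$ agrees with the bracket on $A \boxtimes TY$. Using the identification above, a section of $f\pullback(A)$ is a pair $(X, W)$ with $X \in \Gamma(L \times Y; f\sp{*}A) \simeq \CI(Y; \Gamma(L;A))$ and $W$ a vertical vector field along the $Y$-factor, i.e. $W \in \Gamma(L\times Y; L\times TY)$; the anchor sends $(X, W)$ to $\varrho_*(X) + W \in \maV(L \times Y)$, where $\varrho_*(X)$ is viewed as an $L$-direction vector field depending on $y$. The bracket in a thick pull-back $f\pullback(A)$ is, by the Higgins--Mackenzie construction, determined by requiring the anchor $f\pullback(A) \to T(L\times Y)$ to be a Lie algebroid morphism together with the Leibniz rule, and one computes directly that $[(X_1, W_1),(X_2,W_2)] = ([X_1,X_2] + \mathcal{L}_{W_1}X_2 - \mathcal{L}_{W_2}X_1,\ [W_1,W_2])$, where $\mathcal{L}_{W}X$ means differentiating the $Y$-dependence of $X$ along $W$. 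On the other side, by Remark \ref{rem.LR.explicit} and Corollary \ref{cor.prod1}, the bracket on $A \boxtimes TY = p_1\sp{*}(A) \oplus p_2\sp{*}(TY)$ is exactly this same formula, since $\Gamma(p_1\sp{*}A)$ and $\Gamma(p_2\sp{*}TY)$ generate it and the cross-terms are governed precisely by the derivation action of $TY$-directions on the $Y$-dependence of sections of $f\sp{*}A$. Hence the two Lie algebroid structures coincide, and the vector bundle isomorphism $f\pullback(A) \simeq f\sp{*}(A) \oplus (L\times TY)$ (the splitting coming from the chosen left inverse) is the asserted one, which need only be an isomorphism of vector bundles.

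I expect the main obstacle to be purely bookkeeping: making precise the identification $\Gamma(L\times Y; f\sp{*}A) \simeq \CI(Y;\Gamma(L;A))$ (which needs $Y$ smooth so that there are no corner subtleties, and which uses the Serre--Swan picture of Theorem \ref{SS} in the compact case, or a partition-of-unity argument in general) and then matching the two bracket formulas term by term without sign errors. No genuinely hard point arises: tameness of $f$ is not even needed here since $f$ is already a genuine fibration with smooth fibre $Y$, so Lemma \ref{lemma.corners}(ii) is invoked only in the subsequent reduction of the general tame-submersion case to this product case. The only thing to be careful about is that the second isomorphism is asserted to be merely an isomorphism of vector bundles — it depends on the choice of complement to $f\sp{*}(A)$ inside $f\pullback(A)$, equivalently on splitting the anchor sequence — whereas the first isomorphism $f\pullback(A) \simeq A\boxtimes TY$ is canonical and bracket-preserving; I would state this distinction explicitly when writing up the identification.
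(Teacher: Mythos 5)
Your proof is correct and follows essentially the same route as the paper, which simply cites Definition \ref{def.thick-pb} and Corollary \ref{cor.prod1}; you have filled in the fibrewise computation and the bracket comparison that the paper leaves implicit. The only minor remark is that in this product case the splitting $(\xi,(v,w))\mapsto(\xi,w)$ is in fact canonical (it comes from the given decomposition $T(L\times Y)=TL\times TY$ rather than from an arbitrary choice of complement), but this does not affect the argument.
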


\begin{proof}
  The result then follows from Definition \ref{def.thick-pb} and
  Corollary \ref{cor.prod1}.
\end{proof}

Thus, in general, the Lie algebroid pull-back (or thick pull-back)
$f\pullback A$ will be non-isomorphic to the vector bundle pull-back
$f\sp{*}(A)$. The following was stated in the smooth case in
\cite{MackenzieBook2}.

\begin{proposition}\label{prop.res.pb}
 Let $f : M \to L$ be a surjective tame submersion of manifolds with
 corners and $A \to L$ be a Lie algebroid. Then the thick pull-back
 $f\pullback(A)$ is defined (that is, it is a Lie algebroid).  Let
 $T_{vert}(f) := \ker(f_* : TM \to TL)$, then $T_{vert}(f) \subset
 f\pullback(A)$ is an inclusion of Lie algebroids and $A \simeq \big (
 f \pullback(A)/\ker(f_*) \big)\vert_{L}$ as vector bundles.
\end{proposition}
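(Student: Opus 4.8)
The plan is to reduce the global statement to the local product model already treated in Lemma \ref{lemma.res.pb}, using the local fibration form of a tame submersion supplied by Lemma \ref{lemma.corners}(ii). First I would observe that the three claims — that $f\pullback(A)$ is a well-defined Lie algebroid, that $T_{vert}(f)$ sits inside it as a Lie subalgebroid, and that $A$ is recovered as a quotient bundle restricted to $L$ — are all local over $M$ in the following sense: the set $A \oplus_{TL} TM$ is defined pointwise, so the only thing to check for it to be a vector bundle is local triviality; and a bracket on its sections that satisfies the Leibniz rule and Jacobi identity is likewise determined locally and then glued, because two such brackets agreeing on overlapping open sets agree globally. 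So I would cover $M$ by open sets $U_\alpha$ on which, by Lemma \ref{lemma.corners}(ii), $f|_{U_\alpha}$ is (after shrinking) a fibration $U_\alpha \to V_\alpha$ with $V_\alpha \subset L$ open and fibers diffeomorphic to a fixed Euclidean ball; on each such $U_\alpha$, choosing a local product structure $U_\alpha \cong V_\alpha \times Y$, Lemma \ref{lemma.res.pb} tells us that $f\pullback(A|_{V_\alpha}) \cong (A|_{V_\alpha}) \boxtimes TY$ is a Lie algebroid, hence in particular $A \oplus_{TL} TM$ is a smooth subbundle of $A \times TM$ over $U_\alpha$ and carries a Lie algebroid structure there.

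Next I would check that these local Lie algebroid structures are compatible. The underlying vector bundle $A \oplus_{TL} TM$ is defined intrinsically (independently of any product chart), so there is nothing to glue at the bundle level beyond noting each piece is a subbundle. For the bracket: over a fixed $U_\alpha$, the bracket on $\Gamma(U_\alpha; f\pullback A)$ is pinned down by the Lie algebroid axioms together with its values on a set of generating sections, and in the product model these generators can be taken to be sections of the form $\xi \otimes 1$ with $\xi \in \Gamma(V_\alpha;A)$ pulled back, together with vertical vector fields; since the anchor is forced to be $\varrho\pullback(\xi, X) = X$ (the obvious projection to $TM$, which is inward-pointing-compatible because $f$ is tame), the Leibniz rule determines the bracket from its values on such generators, and these values are intrinsic (they do not see the choice of product splitting). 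Hence the local brackets agree on overlaps $U_\alpha \cap U_\beta$ and glue to a global Lie algebroid structure on $f\pullback(A)$. I would spell this uniqueness-of-bracket argument out carefully, as it is the one slightly delicate point.

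Finally, the two structural assertions. That $T_{vert}(f) = \ker f_* \hookrightarrow f\pullback(A)$, via $X \mapsto (0,X)$, is an inclusion: this map is clearly a fiberwise-injective bundle map over $M$, its image is the subbundle $\{(\xi,X): \xi = 0\}$, and closure of $\Gamma(T_{vert}(f))$ under the bracket (and the compatibility of the inclusion with anchors, both being the inclusion into $TM$) is checked in the local product model, where $T_{vert}(f)$ becomes $L \times TY$ inside $A \boxtimes TY$ — exactly the sub Lie algebra $\Gamma(M_1 \times M_2; p_2^{*}(TY))$ of Corollary \ref{cor.prod1}. For the quotient statement, over $L$ the fiber $(f\pullback A)_\ell$ for $\ell \in L$ consists of pairs $(\xi, X)$ with $\xi \in A_\ell$, $X \in T_\ell M$, and $\varrho(\xi) = f_*(X)$; modding out by $\ker(f_*)_\ell = (T_{vert}(f))_\ell$ kills the $X$-component, so the projection $(\xi,X)\mapsto \xi$ descends to a bundle isomorphism $\big(f\pullback(A)/\ker f_*\big)\big|_{L} \xrightarrow{\ \sim\ } A$, using that $f_*$ is surjective so every $\xi$ occurs. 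I expect the main obstacle to be the gluing/uniqueness-of-bracket step: making precise that the local Lie algebroid structures obtained from different product charts via Lemma \ref{lemma.res.pb} genuinely coincide on overlaps, rather than merely being abstractly isomorphic. Everything else is a routine unwinding of Definition \ref{def.thick-pb} together with the already-established Lemma \ref{lemma.corners}(ii) and Lemma \ref{lemma.res.pb}.
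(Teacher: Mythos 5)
Your proposal is correct and follows essentially the same route as the paper, whose entire proof reads ``this is a local problem, so the result follows from Lemma \ref{lemma.res.pb}'': you localize via Lemma \ref{lemma.corners}(ii) to the product model of Lemma \ref{lemma.res.pb} and then glue. The details you add --- local triviality of $A \oplus_{TL} TM$, the intrinsic characterization of the bracket (anchor plus Leibniz rule on generating sections) guaranteeing agreement on chart overlaps, and the explicit fiberwise verification of the inclusion $T_{vert}(f) \hookrightarrow f\pullback(A)$ and of the quotient identification --- are exactly the steps the paper leaves implicit, and you correctly flag the uniqueness-of-bracket gluing as the only delicate point.
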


\begin{proof}
 This is a local problem, so the result follows from Lemma
 \ref{lemma.res.pb}.
\end{proof}

Let us recall now the definition of the Lie algebroid $A(\maG)$
associated to a Lie groupoid $\maG$, due to Pradine \cite{Pradines}.
Let $d, r : \maG \tto M$ be a Lie groupoid, then we let
\begin{equation*}
  A(\maG) \, := \, \ker(d_* : T\maG \to TM)\vert_{M} \,,
\end{equation*}
that is, $A(\maG)$ is the restriction to the units of the kernel of
the differential of the domain map $d$. The sections of $A(\maG)$
identify with the space of $d$-horizontal, right invariant vector
fields on $\maG$ (that is, vector fields on $\maG$ that are tangent to
the submanifolds $\maG_x := d\sp{-1}(x)$ and are invariant with
respect to the natural action of $\maG$ by right translations).  The
groupoid $\maG$ acts by right translations on $\maG_x$ in the sense
that if $\gamma \in \maG$ has $r(\gamma) = x$ and $d(\gamma) = y$,
then the map $\maG_x \ni h \to h \gamma \in \maG_y$ is a
diffeomorphism.  In particular, the space of sections of $A(\maG) \to
M$ has a natural Lie bracket that makes it into a Lie algebroid.

\begin{definition}\label{def.AG}
 Let $\maG \tto M$ be a Lie groupoid, then the Lie algebroid~$A(\maG)$
 is called the {\em Lie algebroid associated to $\maG$.}
\end{definition}

\section{Constructions with Lie groupoids}

We now introduce some basic constructions using Lie groupoids.

\subsection{Basic examples of groupoids}

We continue with various examples of constructions of Lie groupoids
and Lie algebroids that will be needed in what follows.

We begin with the three basic examples. Most of these examples are
extensions to the smooth category of some examples from the locally
compact category.  We will not treat the locally compact category
separately, however.

\begin{example}
\label{ex.Lie.group}
Any {\em Lie group} $G$ is a Lie groupoid with associated Lie
algebroid $A(G) = Lie(G)$, the Lie algebra of $G$. Let us assume $G$
unimodular, for simplicity, then the product on $\CIc(\maG) =
\CIc(G)$, Equation \ref{eq.def.convolution}, is simply the convolution
product with respect to the Haar measure.
\end{example}

At the other end of the spectrum, we have the following example.

\begin{example}\label{ex.space}
 Let $M$ be a manifold with corners and $\maG\sp{(1)} = \maG\sp{(0)} =
 M$, so the groupoid of this example contains only units. We shall
 call a groupoid with these properties a {\em space}.  We have $A(M) =
 M \times \{0\}$, the zero vector bundle over $M$ The product on
 $\CIc(\maG) = \CIc(M)$ is nothing but the pointwise product of two
 functions.
\end{example}

We thus see that the category of Lie groupoids contains the
subcategories of Lie groups and of manifolds (possibly with
corners). The last basic example is that of a product.

\begin{example}
 \label{ex.product}
 Let $\maG_i \tto M_i$, $i = 1, 2$, be two Lie groupoids. Then $\maG_1
 \times \maG_2$ is a Lie groupoid with units $M_1 \times M_2$.  We
 have $A(\maG_1 \times \maG_2) \simeq A(\maG_1) \boxtimes A(\maG_2)$,
 by Proposition 4.3.10 in \cite{MackenzieBook2}.
\end{example}

We shall need the following more specific classes of Lie groupoids.
The goal is to build more and more general examples that will lead us
our desired desingularization procedure. We proceed by small steps,
mainly due to the complicated nature of this construction, but also
because particular or intermediate cases of this construction are
needed on their own.  The following example is crucial in what
follows, since it will be used in the definition of the
desingularization groupoid.

\begin{example}\label{ex.BLG}
 Let $G$ be a Lie group with automorphism group $\Aut(G)$ and let $P$
 be a principal $\Aut(G)$-bundle. Then the associated fiber bundle
 $\maG := P \times_{\Aut(G)} G$ with fiber $G$ is a Lie groupoid
 called a {\em Lie group bundle} or a {\em bundle of Lie groups}. We
 have $d = r$ and $A(\maG) \simeq P \times_{\Aut(G)} Lie(G)$ in this
 example. We shall be concerned with this example especially in the
 following two particular situations. Let $\pi : E \to M$ be a smooth
 {\em real vector bundle} over a manifold with corners. Then each
 fiber $E_m := \pi\sp{-1}(m)$ is a commutative Lie group, and hence
 $E$ is a Lie groupoid with the corresponding Lie group bundle
 structure. The following frequently used example is obtained as
 follows. Let $\RR_{+}\sp{*} = (0, \infty)$ act on the fibers of the
 vector bundle $\pi : E \to M$ by dilation.  This yields, for each $m
 \in M$, the semi-direct product $G_m := E_m \rtimes
 \RR_{+}\sp{*}$. Then $\maG := \cup G_m$ is a Lie group bundle, and
 hence has a natural Lie groupoid structure.  Typically, we will have
 $E = A(\maH)$, the Lie algebroid of some Lie groupoid $\maH$, in
 which case these constructions appear in the definitions of the
 adiabatic groupoid and of the edge modification, and hence in the
 definition of the desingularization of a Lie groupoid. Equation
 \eqref{eq.def.convolution} becomes the fiberwise convolution.
\end{example}

\begin{example}
\label{ex.pair}
Let $M$ be a smooth manifold (thus $M$ {\em does not have
  corners}). Then we define the {\em pair groupoid} of $M$ as $\maG :=
M \times M$, a groupoid with units $M$ and with $d$ the second
projection, $r$ the first projection, and $(m_1, m_2)(m_2, m_3) =
(m_1, m_3)$. We have $A(M \times M) = TM$, with anchor map the
identity map.  A related example is that of $\maP M$, the {\em path
  groupoid} of $M$, defined as the set of fixed end point homotopy
classes of paths in $M$. It has the same Lie algebroid as the pair
groupoid: $A(\maP M) = TM$, but it leads to differential operators
with completely different properties (and hence to a different
analysis). See \cite{Gualtieri} for a description of all groupoids
integrating $TM$.
\end{example}

\begin{remark}
 The product on $\CIc(\maG) = \CIc(M \times M)$ is, in the case of the
 pair groupoid, simply the product of integral kernels. Let us fix a
 metric on $A(M \times M) = TM$, and hence a measure on $M$. Then
 Equation \eqref{eq.def.convolution} becomes
 \begin{equation}
  \phi \ast \psi (x, z) \, = \, \int_{M} \phi(x, y) \psi(y, z) dy\,.
 \end{equation}
 This is the reason why the pair groupoids are so basic in our 
 considerations.
\end{remark}

We need to recall the concept of a {\em morphism} of two groupoids,
because we want equivariance properties of our constructions.

\begin{definition}\label{def.groupoid.m}
 Let $\maG \tto M$ and $\maH \tto L$ be two groupoids. A {\em
   morphism} $\phi : \maH \to \maG$ is a functor of the corresponding
 categories.
\end{definition}

More concretely, given a morphism $\phi : \maH \to \maG$, it is
required to satisfy $\phi(gh) = \phi(g)\phi(h)$.  Then there will also
exists a map $L \to M$, usually also denoted by $\phi$, such that
$d(\phi(g)) = \phi(d(g))$, $r(\phi(g)) = \phi(r(g))$, and $\phi(u(x))
= u(\phi(x))$.

If $\maG \tto M$ and $\maH \tto L$ are Lie groupoids and the groupoid
morphism $\phi : \maH \to \maG$ is smooth, we shall say that $\phi$ is
a {\em Lie groupoid morphism}. If $\Gamma$ is a Lie group and $\maG
\tto M $ is a Lie groupoid, we shall say that $\Gamma$ {\em acts} on
$\maG$ if there exists a smooth map $\alpha : \Gamma \times \maG \to
\maG$ such that, for each $\gamma \in \Gamma$, the induced map
$\alpha_{\gamma} :\maG \ni g \to \alpha(\gamma, g) \in \maG$ is a Lie
groupoid morphism and $\alpha_{\gamma} \alpha_{\delta} =
\alpha_{\gamma \delta}$.

We now recall the important construction of {\em fibered pull-back
  groupoids} \cite{HigginsMackenzie1, HigginsMackenzie2}.

\begin{example}\label{ex.pullback} Let again $M$ and $L$ be locally
compact spaces and $f : M \to L$ be a continuous map. Let $d, r : \maH
\tto L$ be a locally compact groupoid (so $L$ is the set of units of
$\maH$), the {\em fibered pull-back groupoid} is then
\begin{equation*}
  f\pullback (\maH) \ede \{\, (m, g, m\rp) \in M \times
 \maH \times M, f(m) = r(g),\, d(g) = f(m\rp) \, \} \,. 
\end{equation*}
It is a groupoid with units $M$ and with 
%structural maps 
$d(m, g, m\rp) = m\rp$, $r(m, g, m\rp) = m$, and product $(m, g, m\rp)
(m\rp, g\rp, m\drp) = (m, g g\rp, m\drp)$.  We shall also sometimes
write $M \times_f \maH \times_f M = f\pullback (\maH)$ for the fibered
pull-back groupoid.  We shall use this construction in the case when
$f$ is a tame submersion of manifolds with corners and $\maH$ is a Lie
groupoid.  Then $\maG$ is a Lie groupoid (the fibered pull-back Lie
groupoid).  Indeed, to see that $d$ is a tame submersion, if is enough
to write that $f$ is locally a product, see Lemma
\ref{lemma.corners}(ii).  It is a subgroupoid of the product $M \times
M \times \maH$ of the pair groupoid $M \times M$ and $\maH$. Also by
Proposition 4.3.11 in \cite{MackenzieBook2}, we have
\begin{equation}
 A \big ( f\pullback (\maH) \big) \, \simeq \, f\pullback \big(
 A(\maH) \big) \,
\end{equation}
(see Definition~\ref{def.thick-pb}). Thus the Lie algebroid of the
fibered pull-back groupoid $f\pullback(\maH)$ is the thick pull-back
Lie algebroid $f\pullback \big( A(\maH) \big)$ and hence it contains
as a Lie algebroid the space $\ker (df)$ of $f$-vertical tangent
vector fields on $M$.  We note that if a Lie group $\Gamma$ acts
(smoothly by groupoid automorphisms) on $\maH \tto L$ and if the map
$f : M \to L$ is $\Gamma$-equivariant, then $\Gamma$ will act on
$f\pullback(\maH)$.
\end{example}

\subsection{Adiabatic groupoids and the edge-modification}
Our desingularization uses in an essential way {\em adiabatic groupoids.}
In this subsection, we shall thus recall in detail the construction of
the adiabatic groupoid, as well as some related constructions
\cite{ConnesBook, DebordSkandalis, NWX}.  For the purpose of further
applications, we stress the smooth action of a Lie group $\Gamma$ (by
Lie groupoid automorphisms) and thus the functoriality of our
constructions. We shall use the following standard notation.

\begin{notation}\label{not.res}
Let $d, r : \maG \tto M$ be a groupoid and $A, B \subset M$, then we
denote $\maG_A := d\sp{-1}(A)$ and $\maG_A\sp{B} := r\sp{-1}(B) \cap
d\sp{-1}(A)$. We also write $\maG_x := d\sp{-1}(x)$.
\end{notation}

In particular, $\maG_A\sp{A}$ is a groupoid with units $A$, called the
{\em reduction of $\maG$ to $A$}. In general, it will not be a Lie
groupoid even if $\maG$ is a Lie groupoid.  If $A \subset M$ is
$\maG$-invariant, meaning that $\maG_A\sp{A} = \maG_A = \maG\sp{A} :=
r\sp{-1}(A)$, then $\maG_A$ will be a groupoid, called the {\em
  restriction} of $\maG$ to (the invariant subset) $A$.

Let $\maG$ be a Lie groupoid with units $M$ and Lie algebroid $A :=
A(\maG) \to M$. The {\em adiabatic groupoid $\maG_{ad}$ associated to
  $\maG$} will have units $M \times [0, \infty)$.  We shall define
  $\maG_{ad}$ in several steps: first we define its Lie algebroid,
  then we define it as a set, then we recall the unique smooth
  structure that yields the desired Lie algebroid, and, finally, we
  show that this construction is functorial and thus preserves group
  actions.

\subsubsection{The Lie algebroid of the adiabatic groupoid}.
 We first define a Lie algebroid $A_{ad} \to M \times [0, \infty)$
   that will turn out to be isomorphic to $A(\maG_{ad})$, as in
   \cite{NWX}. As vector bundles, we have
 \begin{equation*}
  A_{ad} \ede  A \times [0, \infty) \to M \times [0, \infty) \,.  
 \end{equation*}
That is, $A_{ad}$ is the vector bundle bundle pull-back of $A \to M$
to $M \times [0, \infty)$ via the canonical projection $\pi : M \times
  [0, \infty) \to M$.  To define the Lie algebra structure on the
    space of sections of $A_{ad}$, let $X(t)$ and $Y(t)$ be sections
    of $A_{ad}$, regarded as smooth functions $[0, \infty) \to
      \Gamma(M; A(\maG))$. Then
\begin{equation}\label{eq.times.t}
  [X, Y](t) \ede t [X(t), Y(t)] \,.
\end{equation}
Let us denote by $\pi\sp{*}(A)$ the Lie algebroid defined by the
vector bundle pull-back, as in Lemma \ref{lemma.prod1}. Thus we see
that $A_{ad} \simeq \pi\sp{*}(A)$ as {\em vector bundles} but {\bf
  not} as Lie algebroids. Nevertheless, we do have a natural Lie
algebroid morphism (over $M \times [0, \infty)$, not injective!)
\begin{equation}\label{eq.Aad}
 A_{ad} \, \simeq \, t\pi\sp{*}(A) \, \to \pi\sp{*}(A) \,,
\end{equation}
where the second Lie algebroid is defined by Lemma
\ref{lemma.rem.prod} and the isomorphism is by Equation
\eqref{eq.times.t}. The induced map identifies $\Gamma(A_{ad})$ with
$t \Gamma(\pi\sp{*}(A))$, however.

\subsubsection{The underlying groupoid of $\maG_{ad}$}\label{sssec.underlying}
We shall define the adiabatic groupoid $\maG_{ad}$ as the union of two
Lie groupoids, denoted $\maG_1$ and $\maG_2$, which we define first.
This will also define the groupoid structure on $\maG_{ad}$ (but not
the smooth structure yet!).  We let $\maG_1 := A(\maG) \times \{0\}$
with the Lie groupoid structure of a bundle of commutative Lie groups
$A(\maG) \times \{0\} \to M$.  (That is $\maG_1$ is simply a vector
bundle, regarded as a Lie groupoid.)  The groupoid $\maG_2$ is given
by $\maG_2 := \maG \times (0, \infty)$, with the product Lie groupoid
structure, where $(0, \infty)$ is regarded as a space (as in Example
\ref{ex.space}). As a set, we then define the adiabatic groupoid
$\maG_{ad}$ associated to $\maG$ as the {\em disjoint} union
 \begin{equation}
  \maG_{ad} \ede \maG_1 \sqcup \maG_2 \ede \bigl( A(\maG) \times \{0\}
  \bigr) \sqcup \bigl( \maG \times (0, \infty)\bigr) .
 \end{equation}
We endow $\maG_{ad}$ with the natural groupoid structure 
$d, r: \maG_{ad} \to M \times [0, \infty)$, where $d$ and $r$ restrict to
each of $\maG_1$ and $\maG_2$ to the corresponding domain and range maps,
respectively. 
 
\subsubsection{The Lie groupoid structure on $\maG_{ad}$}
We endow $\maG_{ad} := \maG_1 \cup \maG_2$ with the unique smooth
structure that makes it a Lie groupoid with Lie algebroid $A_{ad}$, as
in \cite{NistorJapan}. 
We proceed as in 
\cite{ConnesBook, DebordSkandalis} using a (real version of) the 
\dlp deformation to the normal cone\drp\ considered in those papers.
Let us make that construction explicit in our
case. We thus choose connections $\nabla$ on all the
manifolds $\maG_x := d\sp{-1}(x)$, $x \in M$. As in \cite{NWX}, we can 
choose these connections such that the resulting
family of connections is invariant with respect to right
multiplication by elements in $\maG$. This gives rise to a smooth map
$\exp_{\nabla} : A = A(\maG) \to \maG$ that maps the zero section of
$A(\maG)$ to the set of units of $\maG$. There exists a neighborhood
$U$ of the zero section of $A(\maG)$ on which $\exp_{\nabla}$ is a
diffeomorphism onto its image. Let us define then $W = W_{U} \subset A
\times [0, \infty) = A_{ad}$ to be the set of pairs $(X, t) \in A
  \times [0, \infty)$ such that $tX \in U$ and define $\Phi : W \to
    \maG_{ad}$ by the formula
\begin{equation}\label{eq.exp.coordinates}
 \Phi(X, t) \ede 
 \begin{cases}
  \ (\exp_{\nabla}(tX), t) \in \maG \times (0, \infty) & \mbox{ if } 
  t>0 \\ 
%\mbox{ and } tX \in U\\
  \ \ (X, 0) \in A(\maG) \times \{0\} & \mbox{ if } t = 0\,.
 \end{cases}
\end{equation}
We define the smooth structure on $\maG_{ad}$ such that both the image
of $\Phi$ and the set $\maG \times (0, \infty)$ are open subsets of
$\maG_{ad}$ with the induced smooth structure coinciding with the
original one. The transition functions are smooth.  The fact that the
resulting smooth structure makes $\maG_{ad}$ a Lie groupoid follows
from the differentiability with respect to parameters (including
initial data) of solutions of ordinary differential equations. This
smooth structure does not depend on the choice of the connection
$\nabla$, since the choice of a different connection would just amount
to the conjugation with a local diffeomorphism $\psi$ of $\maG$ in a
neighborhood of the units.  By construction, the space of sections of
$A(\maG_{ad})$ identifies with $t \Gamma(\pi\sp{*}(A))$, and hence
$A(\maG_{ad}) \simeq A_{ad}$, as desired. (Note that by
\cite{NistorJapan, NWX}, it is known that there exists a unique Lie
groupoid structure on $\maG_{ad}$ such that the associated Lie
algebroid is $A_{ad}$.)

\subsubsection{Actions of compact Lie groups}
The following lemma states that the adiabatic construction is
compatible with Lie group actions. We state this as a lemma.

\begin{lemma}\label{lemma.ad.Gamma}
Let $\Gamma$ be a Lie group and assume that $\Gamma$ acts on $\maG
\tto M$, then $\Gamma$ acts on $\maG_{ad}$ as well.
\end{lemma}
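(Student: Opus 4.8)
The plan is to prove that a $\Gamma$-action on $\maG$ induces a $\Gamma$-action on $\maG_{ad}$ by using the naturality of every ingredient in the construction of $\maG_{ad}$. First I would note that since $\Gamma$ acts on $\maG \tto M$ by Lie groupoid automorphisms $\alpha_\gamma$, it acts on $M$ as well, and hence diagonally (trivially on the second factor) on $M \times [0,\infty)$, which is the unit space of $\maG_{ad}$. Differentiating, $\Gamma$ acts on the Lie algebroid $A(\maG)$: each $\alpha_\gamma$ induces a Lie algebroid automorphism $(\alpha_\gamma)_* : A(\maG) \to A(\maG)$ covering the action on $M$. One then defines a map $\beta_\gamma : \maG_{ad} \to \maG_{ad}$ piecewise using the decomposition $\maG_{ad} = \bigl(A(\maG)\times\{0\}\bigr) \sqcup \bigl(\maG \times (0,\infty)\bigr)$: on $\maG_1 = A(\maG)\times\{0\}$ set $\beta_\gamma(X,0) = ((\alpha_\gamma)_* X, 0)$, and on $\maG_2 = \maG\times(0,\infty)$ set $\beta_\gamma(g,t) = (\alpha_\gamma(g), t)$. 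Because $\alpha_\gamma$ is a groupoid morphism and differentiation is functorial, each $\beta_\gamma$ is a set-theoretic groupoid automorphism of $\maG_{ad}$ covering the action on $M\times[0,\infty)$, and the cocycle identity $\beta_\gamma \beta_\delta = \beta_{\gamma\delta}$ is inherited from $\alpha_\gamma \alpha_\delta = \alpha_{\gamma\delta}$ (using again that differentiation of a composition is the composition of differentials).

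The remaining point is smoothness: we must check that $\beta : \Gamma \times \maG_{ad} \to \maG_{ad}$ is smooth for the Lie groupoid structure on $\maG_{ad}$ described in Subsection~\ref{sssec.underlying} and the following one. Here I would invoke the chart $\Phi : W \to \maG_{ad}$ of Equation~\eqref{eq.exp.coordinates}, built from a right-invariant family of connections $\nabla$ on the fibers $\maG_x$. The key observation is that, exactly as the smooth structure on $\maG_{ad}$ is independent of the choice of $\nabla$ (stated in the text: a different connection amounts to conjugation by a local diffeomorphism near the units), we may push forward the chosen connections by $\alpha_\gamma$ to get a new invariant family $\nabla^\gamma := (\alpha_\gamma)_*\nabla$; then $\exp_{\nabla^\gamma} = \alpha_\gamma \circ \exp_{\nabla} \circ (\alpha_\gamma)_*^{-1}$. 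Transporting the chart $\Phi$ accordingly shows that $\beta_\gamma$ is expressed in the $\Phi$-coordinates by $(X,t) \mapsto ((\alpha_\gamma)_* X, t)$, which is manifestly smooth jointly in $\gamma, X, t$ since $(\gamma, X) \mapsto (\alpha_\gamma)_* X$ is smooth (it is the differential of the smooth action $\alpha$). On the open piece $\maG \times (0,\infty)$ the map is literally $(\gamma, g, t)\mapsto (\alpha(\gamma,g),t)$, visibly smooth. Since the two open sets $\operatorname{im}\Phi$ and $\maG\times(0,\infty)$ cover $\maG_{ad}$, smoothness of $\beta$ follows. Finally, because $\beta_\gamma$ preserves each of $\maG_1$ and $\maG_2$ and restricts to a Lie groupoid morphism there, and is smooth, it is a Lie groupoid morphism of $\maG_{ad}$; together with the cocycle identity this says precisely that $\Gamma$ acts on $\maG_{ad}$.

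I expect the only real obstacle to be the smoothness verification across the $t=0$ locus, i.e.\ checking that $\beta_\gamma$ is smooth at points of $\maG_1$; this is handled cleanly by the connection-transport argument above, which reduces it to the already-invoked fact that the deformation-to-the-normal-cone smooth structure is natural with respect to diffeomorphisms (here the diffeomorphisms $\alpha_\gamma$ of $\maG$ preserving the unit space and the fibration by the $\maG_x$). Everything else — the groupoid-morphism property and the cocycle relation — is formal functoriality of $A(-)$ applied to the morphisms $\alpha_\gamma$, and of the product structure $\maG\times(0,\infty)$.
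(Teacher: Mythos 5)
Your proposal is correct and follows essentially the same route as the paper: define the action separately on the two pieces $A(\maG)\times\{0\}$ and $\maG\times(0,\infty)$, then verify smoothness across $t=0$ via the exponential chart $\Phi$ and the independence of the smooth structure of $\maG_{ad}$ from the choice of (right-invariant) connection. Your write-up is in fact somewhat more explicit than the paper's (the conjugation formula $\exp_{\nabla^{\gamma}} = \alpha_{\gamma}\circ\exp_{\nabla}\circ(\alpha_{\gamma})_*^{-1}$ and the cocycle/morphism checks), while the paper additionally records the minor domain point that for $\gamma$ in a compact neighborhood $V$ of the identity one can shrink $U$ to $U_1$ so that $V\times W_{U_1}$ maps into $W_U$.
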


\begin{proof}
We can see this as follows. We use the notation in
\ref{sssec.underlying}.  We obtain immediately an action of $\Gamma$
on both $\maG_1$ and $\maG_2$. To see that this extends to an action
on the adiabatic groupoid, we need to check the compatibility with the
coordinate map $\Phi$.  Let $V$ be a compact neighborhood of the
identity in $\Gamma$. We can choose an open neighborhood $U_1 \subset
U$ of the set of units of $\maG$ such that the action of $\Gamma$ on
$M$ maps $V \times U_1$ to $U$. Then $V \times W_{U_1}$ maps to $W_U$
and the resulting map is smooth by the invariance of the smooth
structure on $\maG_{ad}$ with respect to the choice of connection.
\end{proof}

\subsubsection{Extensions of the adiabatic groupoid construction}
We shall need two slight examples of generalizations of the adiabatic
groupoid construction. We shall use the reduction of a groupoid $G$ to
a subset $A$, which, we recall, is denoted $G_A\sp{A} := r\sp{-1}(A)
\cap d\sp{-1}(A)$.

\begin{example}\label{ex.pullback2} Let again $M$ and $L$ be manifolds 
with corners and $f : M \to L$ be a tame submersion of manifolds with
corners. Let $\maH \tto L$ be a Lie groupoid and adiabatic groupoid
$\maH_{ad} \tto L \times [0, \infty)$. Let $\maG := f \pullback (\maH)
  = M \times_{f} \maH \times_{f} M$ be the fibered pull-back
  groupoid. Then the {\em adiabatic groupoid of} $\maG$ (with respect
  to $f$) has units $M \times [0, \infty)$ and is defined by
\begin{equation*}
 \maG_{ad, f} \, := \, f_1 \pullback (\maH_{ad}) \,,
\end{equation*}
where $f_1 := (f, id) : M \times [0, \infty) \to L \times [0,
    \infty)$.  Unlike $\maG_{ad}$, the groupoid $\maG_{ad, f}$ will
    not be a bundle of Lie groups at time $0$, but will be the fibered
    pull-back of the Lie groupoid $A(\maH) \to L$, regarded as a
    bundle of Lie groups, by the map $f : M \to L$. More precisely,
    let $X := M \times \{0\}$, which is an invariant subset of the set
    of units of $M \times [0, \infty)$.  Then the restriction of
      $\maG_{ad, f}$ to $X$ satisfies
\begin{equation}
 (\maG_{ad, f})_{X} \, \simeq \, M \times_{f} A(\maH) \times_{f} M \,
  =: \, f\pullback \big( A(\maH) \big)\,.
\end{equation}
\end{example}

\begin{remark}
If $\maH = L \times L$, then $\maG = M \times M$ (so both $\maH$ and
$\maG$ are pair groupoids in this particular case) and $\maG_{ad, f}$
at time $0$ will be the fibered pull-back to $M$ of the Lie groupoid
$A(\maH) = TL \to L$.  In this particular case, the associated
differential operators on $\maG_{ad, f}$ model adiabatic limits, hence
the name of these groupoids (this explains the choice of the name \dlp
adiabatic groupoid\drp\ in \cite{NWX}).
\end{remark}

For the next example, we shall need an action of $\RR_{+}\sp{*}$ on
the last example.

\begin{remark}\label{rem.DS}
We use the same setting and notation as in Example \ref{ex.pullback2}
above and let $\RR_+\sp{*} = (0, \infty)$ act by dilations on the time
variable $[0, \infty)$. This action induces a family of automorphisms
  of $\maH_{ad}$, as in \cite{DebordSkandalis} if we let $s \in
  \RR_+\sp{*} = (0, \infty)$ act by $s\cdot (g, t) = (g, s\sp{-1}t)$
  on $(g, t) \in \maH \times (0, \infty) \subset \maH_{ad}$. Referring
  to Equation \eqref{eq.exp.coordinates} that defines a
  parametrization of a neighborhood of $A(\maH) \times \{0\} \subset
  \maH_{ad}$, we obtain
\begin{multline*}
 s \cdot \Phi(X, t) \ede s \cdot (\exp_{\nabla}(tX), t) \ede
 (\exp_{\nabla}(tX), s\sp{-1}t) \\
 = \, (\exp_{\nabla}(s\sp{-1}t sX), s\sp{-1}t) \, =: \, \Phi(sX,
 s\sp{-1}t) \,.
\end{multline*}
By setting $t = 0$ in this equation, we obtain by continuity that the
action of $s$ on $(X, 0)$ is $s(X, 0) = (sX, 0)$.
\end{remark}

We shall use this remark to obtain a (slight extension of a)
construction in \cite{DebordSkandalis}. Recall that if a Lie group
$\Gamma$ acts on a Lie groupoid $\maG \tto M$, then the {\em
  semi-direct product} \cite{MackenzieBook2, MoerdijkMrcun02} $\maG
\rtimes \Gamma$ is defined by $\maG \rtimes \Gamma = \maG \times
\Gamma$, as manifolds, and $\maG \rtimes \Gamma$ has units $M$ and
product $(g_1, \gamma_1)(g_2, \gamma_2) := (g_1 \gamma_1(g_2),
\gamma_1 \gamma_2)$.

\begin{example}\label{ex.DS}
We use the notation in Example \ref{ex.pullback2} and in Remark
\ref{rem.DS}.  In particular, we denote $f_1 := (f, id) : M \times [0,
  \infty) \to L \times [0, \infty)$.  The action of $\RR_{+}\sp{*}$
    commutes with $f_1$ and induces an action on $\maG_{ad, f} := f_1
    \pullback (\maH_{ad})$ and we let
\begin{equation*}
 \edge(M, f, \maH) \ede \maG_{ad, f} \rtimes \RR_+\sp{*}
 \ede f_1 \pullback (\maH_{ad}) \rtimes \RR_+\sp{*} \, =
 \, f_1 \pullback ( \maH_{ad} \rtimes \RR_+\sp{*} ) \,,
\end{equation*}
 be the associated semi-direct product groupoid. The space of units of
 $\edge(M, f, \maH)$ is $M \times [0, \infty)$. The groupoid
   $\maH_{ad} \rtimes \RR_+\sp{*}$ was introduced and studied in
   \cite{DebordSkandalis}.
\end{example}

Let us spell out in detail the structure of the groupoid $\edge(M, f,
\maH)$.
   
\begin{remark}\label{rem.str.edge}
To describe $\edge(M, f, \maH)$ as a set, we shall describe its
reductions to $M \times \{0\}$ and to $M \times (0, \infty)$ (that is,
we shall describe its reductions at time $t = 0$ and at time $t > 0$).
Let us endow $A(\maH)$ with the Lie groupoid structure of a
(commutative) bundle of Lie groups with units $L \times \{0\}$.  Then,
at time $t = 0$, $\edge(M, f, \maH)$ is the semi-direct product
$f\pullback (A(\maH)) \rtimes \RR_+\sp{*} \simeq f\pullback (A(\maH)
\rtimes \RR_+\sp{*})$, with $\RR_+\sp{*}$ acting by dilations on the
fibers of $A(\maH)$.  That is
\begin{equation}\label{eq.important}
 \edge(M, f, \maH)_{\{0\} \times M} \, \simeq \, (M \times_{f} A(\maH)
 \times_{f} M) \rtimes \RR_+\sp{*} \, = \, M \times_{f} ( A(\maH)
 \rtimes \RR_+\sp{*})\times_{f} M
\end{equation} 
Thus $\edge(M, f, \maH)_{M \times \{0\}}$ is the fibered pull-back to
$M \times \{0\}$ via $f$ of a bundle of solvable Lie groups on $L$.
On the other hand, the complement, that is, the reduction of $\edge(M,
f, \maH)$ to $M \times (0, \infty)$ is isomorphic to the product
groupoid $f \pullback (\maH) \times (0, \infty)\sp{2} , $ where the
first factor in the product is the fibered pull-back of $\maH$ to $M$
and the second factor is the pair groupoid of $(0, \infty)$.
\end{remark}

For the pair groupoid $\maG = M \times M$ with $M$ smooth, compact,
the example of the adiabatic groupoid is due to Connes
\cite{ConnesBook} and was studied in connection with the index theorem
for smooth, compact manifolds.  See \cite{ConnesBook, DebordSkandalis,
  NWX} for more details.

\subsubsection{The anisotropic construction\label{sssec.an}}  
We shall need also an {\em anisotropic variant} of the groupoid 
$\edge(M, f, \maH)$, which is easier to define, but currently less used 
in applications.
We continue to use the notation in Example \ref{ex.pullback2} and in Remark
\ref{rem.DS}. In particular, $f_1 := (f, id) : M \times [0,
  \infty) \to L \times [0, \infty)$. We now modify the definition of
  $\edge(M, f, \maH) := f_1 \pullback ( \maH_{ad} \rtimes \RR_+\sp{*} )$
by replacing $\maH_{ad}$ with the product groupoid $\maH \times [0, \infty)$.
Thus we define 
\begin{equation}\label{eq.def.anisotropic}
 \edge_{ni}(M, f, \maH) \ede f_1 \pullback \bigl 
 ( (\maH \times [0, \infty) )\rtimes \RR_+\sp{*} \bigr )\,.
\end{equation}
Let us consider the action of $\RR_+\sp{*} = (0, \infty)$ on
$[0, \infty)$ and denote by $\maT := 
[0, \infty) \rtimes \RR_{+}\sp{2}$ the corresponding groupoid semi-direct 
product. Then Equation~\eqref{eq.def.anisotropic} becomes 
\begin{equation}\label{eq.def.anisotropic2}
 \edge_{ni}(M, f, \maH) \ede f \pullback(\maH) \times \maT  \,.
\end{equation}

We have the following analog of Remark \ref{rem.str.edge}. 

\begin{remark}\label{rem.str.edge2}
 The natural morphism $\maH_{ad} \to \maH \times [0, \infty)$ that
   integrates the Lie algebroid morphism of Equation \eqref{eq.Aad},
   gives rise to a morphism
\begin{equation}\label{eq.def.Psi}
  \Psi : \edge(M, f, \maH)_{M \times (0, \infty)} \, \to \,
  \edge_{ni}(M, f, \maH)_{M \times (0, \infty)} \,. 
\end{equation}
We can describe this morphism and, at the same time, describe
$\edge_{ni}(M, f, \maH)$ as a set, by describing its restrictions to
$M = M \times \{0\}$ and to $M \times (0, \infty)$, using, at time $t
= 0$, the $\RR_{+}\sp{*}$-equivariant groupoid morphism $A(\maH) \ni
\xi \to u(d(\xi)) \in \maH$. Thus in the two equations below, the
composition is the restriction of $\Psi$ to $M$ and, respectively, to
$M \times (0, \infty)$:
\begin{equation}\label{eq.important2}
\begin{gathered}
 \edge(M, f, \maH)_{M} \, \simeq \, f\pullback(A(\maH)) \rtimes
 \RR_{+}\sp{*} \ \to \, f\pullback(\maH) \times \RR_{+}\sp{*} \,
 \simeq \, \edge_{ni}(M, f, \maH)_{M} \\
 \edge(M, f, \maH)_{M \times (0, \infty)} \, \simeq \,
 f\pullback(\maH) \times (0, \infty)\sp{2} \, \simeq \, \edge_{ni}(M,
 f, \maH)_{M \times (0, \infty)} \,.
\end{gathered}
\end{equation} 
\end{remark} 

The construction of the edge modifications is equivariant.

\begin{lemma}\label{lemma.edge.Gamma}
Let us assume with the same notation that a Lie group $\Gamma$ acts on
$\maH \tto L$ and that the tame submersion $f : M \to L$ is $\Gamma$
invariant. Then $\Gamma$ acts on $\edge(M, f, \maH)$ and on
$\edge_{ni}(M, f, \maH)$ in a way that is compatible with the
structure provided by Remarks \ref{rem.str.edge} and
\ref{rem.str.edge2}. In particular, the natural morphism $\edge(M, f,
\maH) \to \edge_{ni}(M, f, \maH)$ is $\Gamma$-equivariant.
\end{lemma}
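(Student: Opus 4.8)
The plan is to reduce everything to the functoriality statements already established for the building blocks of the construction. Since $\edge(M,f,\maH)$ and $\edge_{ni}(M,f,\maH)$ are assembled from (i) the adiabatic groupoid $\maH_{ad}$ (or the product $\maH \times [0,\infty)$), (ii) the semidirect product with $\RR_+^*$, and (iii) the fibered pull-back along $f_1=(f,\mathrm{id})$, it suffices to check that a $\Gamma$-action is carried through each of these three operations, and that the identifications of Remarks \ref{rem.str.edge} and \ref{rem.str.edge2} are $\Gamma$-equivariant.

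First I would record that, by hypothesis, $\Gamma$ acts on $\maH \tto L$ by Lie groupoid automorphisms and $f:M\to L$ is $\Gamma$-invariant (where $\Gamma$ acts trivially on $M$, or more generally compatibly). By Lemma \ref{lemma.ad.Gamma}, the $\Gamma$-action extends to the adiabatic groupoid $\maH_{ad} \tto L\times[0,\infty)$ (acting trivially on the $[0,\infty)$ factor); the product groupoid $\maH\times[0,\infty)$ trivially inherits a $\Gamma$-action as well. Next, the dilation action of $\RR_+^*$ on the time variable commutes with $\Gamma$ (they act on different factors and $\Gamma$ acts trivially on $[0,\infty)$), so $\Gamma$ acts on the semidirect products $\maH_{ad}\rtimes\RR_+^*$ and $(\maH\times[0,\infty))\rtimes\RR_+^*$ by automorphisms, via $\gamma\cdot(g,s) = (\gamma\cdot g, s)$; one checks this respects the twisted product $(g_1,s_1)(g_2,s_2)=(g_1 s_1(g_2), s_1 s_2)$ precisely because $\gamma$ commutes with the $\RR_+^*$-action. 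Finally, since $f$ is $\Gamma$-invariant, so is $f_1=(f,\mathrm{id})$, and by the last sentence of Example \ref{ex.pullback} a $\Gamma$-action on the base groupoid induces a $\Gamma$-action on the fibered pull-back; applying this with base groupoid $\maH_{ad}\rtimes\RR_+^*$ (resp. $(\maH\times[0,\infty))\rtimes\RR_+^*$) and map $f_1$ yields the desired $\Gamma$-action on $\edge(M,f,\maH) = f_1\pullback(\maH_{ad}\rtimes\RR_+^*)$ (resp. on $\edge_{ni}(M,f,\maH)$).

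It then remains to see that the action is compatible with the structural descriptions. The identifications in Remark \ref{rem.str.edge} of the reductions at $t=0$ and $t>0$ are all built from $f\pullback(-)$, $\rtimes\RR_+^*$, and the passage $\maG_2 := \maG\times(0,\infty)$, each of which we have just seen is $\Gamma$-equivariant; restricting the $\Gamma$-action to the invariant subsets $M\times\{0\}$ and $M\times(0,\infty)$ of the unit space and tracing through the isomorphisms gives compatibility. For Remark \ref{rem.str.edge2}, the natural morphism $\maH_{ad}\to\maH\times[0,\infty)$ integrates the canonical Lie algebroid morphism of Equation \eqref{eq.Aad}, and this morphism is $\Gamma$-equivariant because the adiabatic construction — including the coordinate map $\Phi$ of Equation \eqref{eq.exp.coordinates} — was shown in Lemma \ref{lemma.ad.Gamma} to be $\Gamma$-equivariant (one may choose the right-invariant family of connections $\nabla$ to be $\Gamma$-invariant, using compactness of a neighborhood of the identity in $\Gamma$ as in the proof of that lemma). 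Likewise the $t=0$ morphism $A(\maH)\ni\xi\mapsto u(d(\xi))\in\maH$ is $\Gamma$-equivariant since $\Gamma$ acts by groupoid automorphisms and hence commutes with $u$ and $d$. Applying $f_1\pullback(-\rtimes\RR_+^*)$ to these equivariant morphisms shows $\Psi$ of Equation \eqref{eq.def.Psi}, and the full natural morphism $\edge(M,f,\maH)\to\edge_{ni}(M,f,\maH)$, is $\Gamma$-equivariant.

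I expect the only real subtlety to be the equivariance of the smooth structure on the adiabatic groupoid, i.e. that the coordinate chart $\Phi$ can be chosen $\Gamma$-compatibly; but this is exactly the content of Lemma \ref{lemma.ad.Gamma}, whose proof (shrinking $U$ to a $\Gamma$-stable neighborhood $U_1$ using a compact neighborhood $V$ of the identity in $\Gamma$) applies verbatim here. Everything else is a formal bookkeeping exercise of propagating a group action through pull-backs and semidirect products, using the already-established functoriality of each construction; so the proof is short, and I would simply assemble these observations, remarking that the verification of the product identities is routine.
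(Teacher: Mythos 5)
Your proof is correct and follows essentially the same route as the paper: the $\Gamma$-action on $\maH_{ad}$ comes from Lemma \ref{lemma.ad.Gamma}, it commutes with the $\RR_{+}^{*}$-dilations by naturality and hence passes to the semidirect product $\maH_{ad}\rtimes\RR_{+}^{*}$, and the $\Gamma$-invariance of $f$ then transports it to the fibered pull-back $\edge(M,f,\maH)$. The only cosmetic difference is that for $\edge_{ni}(M,f,\maH)$ the paper takes the shortcut of using the product decomposition \eqref{eq.def.anisotropic2} with $\Gamma$ acting trivially on the factor $\maT$, whereas you propagate the action through the three-step construction; your explicit verification of compatibility with Remarks \ref{rem.str.edge} and \ref{rem.str.edge2} and of the equivariance of $\Psi$ supplies detail the paper leaves implicit.
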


\begin{proof} The action of $\Gamma$ on $\edge_{ni}(M, f, \maH)$ is simply the 
 product action comming from the isomorphism \eqref{eq.def.anisotropic2},
 with $\Gamma$ acting trivially on the groupoid $\maT$. We thus need 
 only consider the action of $\Gamma$ on $\edge(M, f, \maH)$.
 The group $\Gamma$ acts on $\maH_{ad}$ by Lemma \ref{lemma.ad.Gamma}.
 This action commutes with the action of $\RR_{+}\sp{*}$ by
 naturality. Hence we obtain an action of $\Gamma$ on $\maH_{ad}
 \rtimes \RR_{+}\sp{*}$. The result follows since $f : M \to L$ is
 $\Gamma$ invariant.
\end{proof}
  
\subsection{Glueing Lie groupoids\label{ssec.glueing}}
We shall need to \dlp glue\drp\ two Lie groupoids along an open subset
of the set of units above which they are isomorphic. This can be done
under certain conditions, and we review now this construction
following Theorem~3.4 in \cite{Gualtieri}.

Let $\maG_i \tto M_i$, $i = 1, 2$, be two Lie groupoids. (Thus the
sets of units $M_i$ are Hausdorff manifolds, possibly with corners.)
Let us assume that we are given open subsets $U_i \subset M_i$ such
that the reductions $(\maG_i)_{U_i}\sp{U_i}$, $i = 1, 2$, are
isomorphic via an isomorphism $\phi : (\maG_1)_{U_1}\sp{U_1} \to
(\maG_2)_{U_2}\sp{U_2}$ that covers a diffemorphism $U_1 \to U_2$,
also denoted by $\phi$. We define $M := M_1 \cup_{\phi} M_2$ as
follows.  Let us consider on the disjoint union $M_1 \sqcup M_2$ the
equivalence relation $\sim_{\phi}$ generated by $x \sim_{\phi}
\phi(x)$ if $x \in U_1$. Then $M_1 \cup_{\phi} M_2 := M_1 \sqcup
M_2/\sim_{\phi}$. We define similarly
\begin{equation}\label{eq.glueing}
 \maH \ede \maG_1 \cup_{\phi} \maG_2 \ede (\maG_1 \sqcup \maG_2)/\sim \,.
\end{equation} 
We shall denote by $U_1\sp{c} := M_1 \smallsetminus U_1$ the
complement of $U_1$ in $M_1$ and by $M_1 \cap \maG_1 U_1\sp{c} \maG_1$
the $\maG_1$-orbit of $U_1\sp{c}$ in $M_1$.  We shall use a similar
notation for $\maG_2$.

\begin{proposition}\label{prop.glueing}
 Let us assume that the set $\phi(U_1 \cap \maG_1 U_1\sp{c} \maG_1)$
 does not intersect $U_2 \cap \maG_2 U_2\sp{c} \maG_2$ and that $M :=
 M_1 \cup_{\phi} M_2$ is a Hausdorff manifold (possibly with corners).
 Then the set $\maH$ of Equation \eqref{eq.glueing} has a natural Lie
 groupoid structure with units $M$. We have $\maG_i \simeq
 (\maH)_{M_i}\sp{M_i}$.
\end{proposition}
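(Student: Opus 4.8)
The plan is to verify the four claims in Proposition~\ref{prop.glueing}: (a) $\maH$ is well defined as a set, (b) it carries a groupoid structure with units $M$, (c) this structure is that of a Lie groupoid, and (d) the reductions recover $\maG_1$ and $\maG_2$. First I would check that $\sim$ on $\maG_1 \sqcup \maG_2$ is compatible with $\sim_\phi$ on $M_1 \sqcup M_2$ via $d$ and $r$: since $\phi$ is a groupoid isomorphism over the diffeomorphism $U_1 \to U_2$, identifying $g \in (\maG_1)_{U_1}^{U_1}$ with $\phi(g)$ automatically identifies $d(g)$ with $d(\phi(g)) = \phi(d(g))$ and likewise for $r$; so $d, r$ descend to maps $\maH \to M$. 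The only subtlety is that $(\maG_i)_{U_i}^{U_i}$ consists of arrows with \emph{both} endpoints in $U_i$, so one must observe that an arrow of $\maG_1$ with $d(g) \in U_1$ but $r(g) \notin U_1$ is \emph{not} glued to anything; this is exactly why the orbit hypothesis is needed. I would then note that the quotient maps $\maG_i \to \maH$ and $M_i \to M$ are injective on each piece (so $\maG_i$ embeds as a subset of $\maH$), because $\sim$ only identifies points of $\maG_1$ with points of $\maG_2$ and never two distinct points of the same $\maG_i$.

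Next, for the groupoid structure: the unit embedding $u : M \to \maH$ is induced by $u_1 \sqcup u_2$ (compatible with $\sim$ since $\phi$ preserves units), and the inverse $i : \maH \to \maH$ is induced by $i_1 \sqcup i_2$ (compatible since $\phi(g^{-1}) = \phi(g)^{-1}$). The multiplication is the point where the orbit hypothesis $\phi(U_1 \cap \maG_1 U_1^c \maG_1) \cap (U_2 \cap \maG_2 U_2^c \maG_2) = \emptyset$ does real work, and I expect this to be the main obstacle. Concretely, one must show that if $g, h$ are composable arrows in $\maH$, then they can be multiplied consistently: a priori $g$ might be represented in $\maG_1$ and $h$ in $\maG_2$, so one needs to lift both to the same $\maG_i$. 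I would argue that the set of arrows of $\maH$ \emph{not} lying in the image of $(\maG_1)_{U_1}^{U_1} \cong (\maG_2)_{U_2}^{U_2}$ splits as a disjoint union of arrows ``purely in $\maG_1$'' and ``purely in $\maG_2$'', where the orbit condition guarantees that an arrow with an endpoint outside $U_1$ (when viewed in $\maG_1$) cannot simultaneously, after gluing, be viewed as having an endpoint outside $U_2$; hence composability forces a common ambient groupoid $\maG_i$ in which to form the product, and the result is independent of the choice because $\phi$ is a homomorphism on the overlap. The associativity, unit, and inverse axioms then follow because they hold in each $\maG_i$ and the quotient map is a ``local isomorphism'' of groupoids.

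For the smooth structure: the quotient maps $M_i \to M$ and $\maG_i \to \maH$ are, by the above injectivity, open embeddings onto the subsets $M_i$ and $\maG_i$ of $\maH$; $M$ is a manifold with corners by hypothesis, and I would equip $\maH$ with the smooth structure for which $\maG_1, \maG_2$ are open submanifolds with their given structures, which is consistent precisely because the transition map on the overlap is $\phi$, a diffeomorphism. Since $\maG_1 \cup \maG_2 = \maH$ as an open cover, the structural maps $d, r, u, i, \mu$ are smooth because they restrict to smooth maps on each open piece (for $\mu$ one uses that $\maH^{(2)}$ is covered by $\maG_1^{(2)}$ and $\maG_2^{(2)}$ together with the overlap, again by the composability analysis above, and invokes Corollary~\ref{cor.corners} to see $\maH^{(2)}$ is a manifold with corners once we know $d$ is a tame submersion — which it is, being tame on each open piece). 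Finally, claim (d), $\maG_i \simeq (\maH)_{M_i}^{M_i}$, is essentially by construction: $M_i \subset M$ is open, an arrow of $\maH$ with both $d$ and $r$ in $M_i$ is represented by an arrow of $\maG_i$ (here again the orbit hypothesis prevents an arrow of $\maG_j$, $j \ne i$, with an endpoint in $M_i \setminus U_i$ from sneaking in), and conversely every arrow of $\maG_i$ lands in $(\maH)_{M_i}^{M_i}$; so the open embedding $\maG_i \hookrightarrow \maH$ is an isomorphism onto the reduction.
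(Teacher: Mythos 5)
Your proposal is correct and follows essentially the same route as the paper: identify $\maG_1, \maG_2$ as subsets of $\maH$ via the gluing, define the structural maps by restriction, and use the orbit hypothesis to rule out the only problematic case for multiplication, namely two composable arrows each having an endpoint outside its gluing region, which would place their common unit in $U_1 \cap \maG_1 U_1^{c}\maG_1 \cap \maG_2 U_2^{c}\maG_2$. The paper's proof is terser and omits the smoothness and reduction details you supply; your one imprecise sentence about a single arrow ``being viewed as having an endpoint outside $U_2$'' should be rephrased in terms of the common unit of the composable pair, but the underlying argument is the paper's.
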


\begin{proof}
This is basically a consequence of the definitions.  We define the
domain map $d : \maH \to M$ by restriction to each of the groupoids
$\maG_i$, which is possible since $h \sim h\rp$ implies $d(h) \sim
d(h\rp)$. We proceed similarly to define the range map $r$.

Let us now identify $\maG_i$ with subsets of $\maH$.  Hence now $U_1 =
U_2$ and $\phi$ is the identity. To define the multiplication for $g_j
\in \maH$, $j = 1, 2$, just note that the hypothesis ensures that, if
$g_j\in\maG_j$, for $j=1,2$, with $d(g_1) = r(g_2) \in M$, then, first
of all, $x := d(g_1) = r(g_2) \in M_1 \cap M_2 = U_1 = U_2$.  Next,
either $r(g_1)\in U_1$ or $d(g_2)\in U_2 = \phi(U_1) = U_1$, because
otherwise
\begin{equation*}
  d(g_1) \, = \, r(g_2) \in U_1 \, \cap \, \maG_1 U_1\sp{c} \maG_1 \,
  \cap \,\maG_2 U_2\sp{c} \maG_2 \,,
\end{equation*}
which is in direct contradiction with the hypothesis.  This means
that, in fact, $g_j \in \maG_i$, for the same $i$, and we can define
the multiplication using the multiplication in $\maG_i$.
\end{proof}

One of the differences between our result, Proposition
\ref{prop.glueing}, and Theorem 3.4 in \cite{Gualtieri} is that we are
not starting with a Lie algebroid that needs to be integrated, thus we
do not have orbits that we could use. See however \cite{Gualtieri} for
a discussion of the gluing procedure in the framework of manifolds
(and many other useful results).

\section{Desingularization groupoids}

We now introduce our desingularization construction of a Lie groupoid
along a tame submanifold.

\subsection{A structure theorem near tame submanifolds} 
We have the following basic definition.

\begin{definition}\label{def.tame}
 Let $A \to M$ be a Lie algebroid over a manifold $M$.  Let $L \subset
 M$ be a submanifold of $M$ such that there exists a tubular
 neighborhood $U$ of $L$ in $M$ with projection map $\pi : U \to
 L$. We shall say that $L$ is an $A$-{\em tame submanifold} of $M$ if
 there exists also a Lie algebroid $B \to L$ such that the restriction
 of $A$ to $U$ is isomorphic to the thick pull-back Lie algebroid of
 $B$ to $U$ via $\pi$, that is,
\begin{equation}\label{def.tame.eq1}
 A \vert_{U} \, \simeq \, \pi\pullback(B) \,,
\end{equation}
via an isomorphism that is the identity on $U$. Both $M$ and $L$ are
allowed to have corners.
\end{definition}

\begin{remark}
We note that, by Proposition \ref{prop.res.pb}, we have that the Lie
algebroid $B$ of Definition \ref{def.tame} satisfies $B \simeq
(A/\ker(\pi_*))\vert_L$, and hence $B$ is determined up to an
isomorphism by $A$.
\end{remark}

Recall that if $\maG \tto M$ is a groupoid and $A \subset M$, then
$\maG_A\sp{A} := r\sp{-1}(A) \cap d\sp{-1}(A)$ is the {\em reduction
  of $\maG$ to $A$}. We shall use repeatedly the fact that, if $B
\subset A$, then $(\maG_A\sp{A})_{B}\sp{B} = \maG_{B}\sp{B}$.  Also,
recall that a topological space is called {\em simply-connected} if it
is path connected and its first homotopy group $\pi_1(X)$ is
trivial. A groupoid $\maG$ is called {\em $d$-simply connected} if the
fibers $\maG_x := d\sp{-1}(x)$ of the domain map are simply-connected.
Here is one of our main technical results that provides a canonical
form for a Lie groupoid in the neighborhood of a tame submanifold. All
the isomorphisms of Lie groupoids are smooth morphisms.

\begin{theorem}\label{thm.tame}
 Let $\maG \tto M$ be a Lie groupoid and let $L \subset M$ be an
 $A(\maG)$-tame submanifold of $M$.  Let $U \subset M$ be a tubular
 neighborhood of $L$ as in Definition \ref{def.tame}, with $\pi : U
 \to L \subset U$ the associated structural projection. Then the
 reduction groupoids $\maG_{L}\sp{L}$ and $\maG_U\sp{U}$ are Lie
 groupoids. Assume, furthermore, that the fibers of $\pi : U \to L$
 are simply-connected. Then there exists an isomorphism
\begin{equation*}
  \maG_{U}\sp{U} \, \simeq 
 \, \pi\pullback ( \maG_{L}\sp{L} ) \ede 
   U \times_{\pi} \maG_{L}\sp{L} \times_{\pi} U\,
\end{equation*}
of Lie groupoids that is the identity on the set of units $U$.
\end{theorem}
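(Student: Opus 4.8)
The plan is to compare $\maG_U^U$ and $\pi\pullback(\maG_L^L)$ through their Lie algebroids over $U$ and then integrate the resulting identification, the simply-connectedness of the fibers of $\pi$ entering exactly in the integration step, where it kills the monodromy obstruction.

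First I would settle the two groupoidness claims. Since $U$ is open in $M$, both $d\sp{-1}(U)$ and $r\sp{-1}(U)$ are open in $\maG$, so $\maG_U^U = d\sp{-1}(U)\cap r\sp{-1}(U)$ is an open subgroupoid of $\maG$, hence a Lie groupoid, with $A(\maG_U^U) = A(\maG)\vert_U \simeq \pi\pullback(B)$ by the tameness hypothesis (Definition \ref{def.tame}). For $\maG_L^L$, Lemma \ref{lemma.corners}(iii) applied to the tame submersions $d$ and $r$ shows that $d\sp{-1}(L)$ and $r\sp{-1}(L)$ are submanifolds of $\maG$; I would then check that they meet transversally along $\maG_L^L$ using the thick pull-back form of $A(\maG)\vert_U$: over a point of $L$ the anchor of $\pi\pullback(B)$ surjects onto the vertical directions $\ker(\pi_*)$, which together with $TL$ span $TM$ along $L$ because $\pi\colon U\to L$ is a tubular neighbourhood (equivalently, near $L$ every $\maG$-orbit is $\pi$-saturated, so $L$ is transverse to all orbits it meets), from which the required transversality follows. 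Hence $\maG_L^L = d\sp{-1}(L)\cap r\sp{-1}(L)$ is a submanifold, the structural maps restrict to tame submersions, and $\maG_L^L \tto L$ is a Lie groupoid; the same computation identifies $A(\maG_L^L)$ with the canonical wide subalgebroid $B \hookrightarrow \pi\pullback(B)$, so that $A(\maG_L^L)\simeq B$.

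Combining this with $A(f\pullback\maH)\simeq f\pullback(A(\maH))$ (Example \ref{ex.pullback}, Proposition 4.3.11 of \cite{MackenzieBook2}) yields Lie algebroid isomorphisms over $U$,
\[ A(\maG_U^U)\ \simeq\ \pi\pullback(B)\ \simeq\ \pi\pullback(A(\maG_L^L))\ \simeq\ A\big(\pi\pullback(\maG_L^L)\big)\,. \]
Thus $\maG_U^U$ and $\pi\pullback(\maG_L^L)$ are two Lie groupoids with the same Lie algebroid over $U$, and the issue is to produce an isomorphism between them inducing the identity on $U$. For this I would integrate the wide subalgebroid inclusion $\ker(\pi_*)\hookrightarrow\pi\pullback(B) = A(\maG_U^U)$: since the fibers of $\pi$ are connected and simply-connected, the equivalence-relation groupoid $U\times_\pi U \tto U$ is the source-simply-connected integration of $\ker(\pi_*)$, so by the integrability results of Moerdijk--Mrcun \cite{MoerdijkMrcun02} (Lie's second theorem) the inclusion integrates to a Lie groupoid morphism $U\times_\pi U\to\maG_U^U$, which one checks is an embedding onto a wide Lie subgroupoid $\maR$. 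Together with the subgroupoid $\maG_L^L\subset\maG_U^U$ produced above, this should let me write every arrow of $\maG_U^U$ as a product of one arrow of $\maR$, one of $\maG_L^L$, and one of $\maR$, giving the isomorphism $\maG_U^U\simeq \maR\cdot\maG_L^L\cdot\maR = U\times_\pi\maG_L^L\times_\pi U = \pi\pullback(\maG_L^L)$, where existence of the decomposition uses source-connectedness of the $\pi$-fibers.

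I expect this integration step, together with the uniqueness of the above decomposition, to be the main obstacle. Uniqueness amounts to injectivity of the resulting morphism $\maG_U^U\to\pi\pullback(\maG_L^L)$ — which is automatically a local diffeomorphism realizing the Lie algebroid isomorphism displayed above and is the identity on $U$ — and it is precisely here that simply-connectedness of the fibers of $\pi$ is indispensable: $\maG$ is not assumed $d$-simply connected, and without this hypothesis the two integrations of $\pi\pullback(B)$ genuinely differ, their source fibers differing by $\pi_1$ of the fibers of $\pi$. A convenient way to package surjectivity and injectivity at once is to compare source fibers: the source fiber of $\pi\pullback(\maG_L^L)$ over $x$ is a bundle over $(\maG_L^L)_{\pi(x)}$ with contractible fibers $\pi\sp{-1}(\cdot)$, so the local diffeomorphism from the source fiber of $\maG_U^U$ over $x$, which carries the same monodromy data, must be a diffeomorphism.
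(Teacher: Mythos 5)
Your proposal is correct and follows essentially the same route as the paper: transversality of $(d,r)$ to $L\times L$ (coming from tameness) makes $\maG_L^L$ a Lie groupoid, the inclusion $\ker(\pi_*)\hookrightarrow A(\maG)\vert_U$ is integrated via Moerdijk--Mrcun to a unit-preserving (hence injective) morphism $\Phi: U\times_\pi U\to\maG_U^U$ using simple-connectedness of the $\pi$-fibers, and conjugating by the arrows $g(u):=\Phi(\pi(u),u)$ produces the decomposition $\gamma\mapsto(r(\gamma),\,g(r(\gamma))\gamma g(d(\gamma))^{-1},\,d(\gamma))$. The paper just writes this isomorphism and its inverse explicitly, which makes your closing source-fiber/monodromy discussion of injectivity and surjectivity unnecessary.
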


\begin{proof} 
First of all, we have that $\maG_L\sp{L}$ is a Lie groupoid by
\cite[Proposition 1.5.16]{MackenzieBook2} since the joint map $(d, r)
: \maG \to M \times M$ is transverse to $L \times L$. This
transversality property follows from the $A$-tameness of $L$, which in
turn implies that for every $g\in U$ we have that
\begin{equation*} 
  (d_\ast, r_\ast) (T_g\maG) \supset T_{d(g)}U \times T_{vert,
    r(g)}\pi.
\end{equation*}
Then, the fibered pair groupoid $\maH := U \times_\pi U = \{(u_1,
u_2), \pi(u_1) = \pi(u_2) \}$
is a Lie groupoid with Lie algebroid $T_{vert}\pi = \ker(d \pi)$.  The
assumption that the fibers of $\pi : U \to L$ are simply-connected
shows that $\maH$ is $d$-simply connected. Since $T_{vert}\pi$ is
contained in $A(\maG)\vert_U$ as a Lie subalgebroid, by the definition
of a $A(\maG)$-tame submanifold.  Proposition 3.4 of
\cite{MoerdijkMrcun02} (see also \cite{MackenzieBook2, NistorJapan})
gives there exists a morphism of Lie groupoids
\begin{equation}
  \Phi \, : \, \maH \ede U \times_\pi U  \, \to \, \maG_{U}\sp{U}
\end{equation}
that preserves the units, in the sense that $d(\Phi(\gamma)) =
d(\gamma)$ and $r(\Phi(\gamma)) = r(\gamma)$.  This gives that $\Phi$
is injective.

% fibered pair gropoid is used here
Let $(u_1, u_2) \in U \times U$ with $\pi(u_1) = \pi(u_2)$. Then
$(u_1, u_2) \in \maH := U \times_\pi U$. In particular, $(\pi(u), u)
\in L \times_\pi U \in \maH$ for any $u \in U$. Let us denote by $g(u)
:= \Phi(\pi(u), u) \in \maH \subset \maG$, which defines a smooth map
$g : U \to \maG$, since $\Phi$ is smooth. Then, for any $\gamma \in
r\sp{-1}(U) \cap d\sp{-1}(U) =: \maG_U\sp{U}$ and any $u \in U$, we
have
\begin{equation*}
 \begin{gathered}
  d(g(u)) = d(\pi(u), u) = u \,, \quad r(g(u)) = r(\pi(u), u) = \pi(u)
  \in L \,,\\
  h_{\gamma} := g(r(\gamma)) \gamma g(d(\gamma))\sp{-1} \in
  \maG_L\sp{L}\, , \ \ \mbox{ and }\ \ \Psi(\gamma) : = (r(\gamma),
  h_{\gamma} , d(\gamma)) \in U \times_{\pi} \maG_{L}\sp{L}
  \times_{\pi} U\,.
 \end{gathered}
\end{equation*}
The map $\Psi$ is the desired isomorphism.
\end{proof}

\subsection{The edge modification}
We shall now use the structure theorem, Theorem \ref{thm.tame} to
provide a desingularization of a Lie groupoid in the neighborhood of a
tame submanifold of its set of units. We need, however, to first
discuss the (real) blow-up of a tame submanifold. We use the standard
approach, see for example \cite{schroedinger, kottkeMelrose}.

\begin{notation}\label{not.submanifold}{\normalfont
 In what follows, $L$ will be a {\em tame} submanifold a manifold with
 corners (to be specified each time), that is, a submanifold with the
 property that it has a tubular neighborhood $U$ with structural
 projection $\pi : U \to L$.  We let $S := \pa
 U$. We shall denote by $NL$ the normal bundle of $L$ in $M$. We
 assume that $U$ identifies with the set of unit vectors in $NL$ using
 some connection. In particular, $S \simeq SNL$, the set of unit
 vectors in $NL$, and $U \smallsetminus L \simeq S \times (0, 1)$.}
\end{notation}

We now recall the definition of the real blow-up of a manifold with
respect to a tame submanifold. We use the notation introduced in
\ref{not.submanifold}.  Let us assume that $L$ is a tame submanifold
of a manifold with corners $M$.  Informally, the {\em real blow-up}
or, simply, the {\em blow-up} of $M$ along $L$ is the manifold with
corners obtained by removing $L$ from $M$ and gluing back $S \simeq
SNL$ in a compatible way.

\begin{definition}\label{def.blow-up}
Let $L$ be a tame submanifold of a manifold with corners $M$.  We use
the notation introduced in \ref{not.submanifold} and we let $\phi$ be
the diffeomorphism $U \smallsetminus L \simeq S \times (0, 1)$.  Then
the {\em real blow-up} of $M$ along $L$, denoted $[M:L]$ is defined by
glueing $M \smallsetminus L$ and $S \times [0, 1)$ using $\phi$, that
  is
\begin{equation}\label{eq.blow-up}
 [M:L] \ede (M \smallsetminus L) \cup_{\phi} (S \times [0, 1)) \, = \,
   \bigl ( (M \smallsetminus L) \sqcup S \times [0, 1) \bigr )/\sim
     \,,
\end{equation}
where $\sim$ is the equivalence relation defined by $\phi(x) \sim x$,
as in the Subsection \ref{ssec.glueing}.
\end{definition}

\begin{remark}\label{rem.kappa}
By construction, there exists an associated natural smooth map
\begin{equation*}
  \kappa : [M: L] \, \to \, M \,,
\end{equation*}
the {\em blow-down map}, which is uniquely determined by the
condition that it be continuous and it be the identity on $M \setminus
L$. For example,
\begin{equation}\label{ex.blow-up}
   [\RR\sp{n+k}: \{0\} \times \RR\sp{k}] \, \simeq \, S\sp{n-1} \times
   [0, \infty) \times \RR\sp{k}\, ,
\end{equation}
with $r \in [0, \infty)$ representing the distance to the submanifold
  $L = \{0\} \times \RR\sp{k}$ and $S\sp{p}$ denoting the sphere of
  dimension $p$ (the unit sphere in $\RR\sp{p+1}$). Locally, all
  blow-ups that we consider are of this form.  In this example,
  Equation \eqref{eq.blow-up}, the blow-down map is simply
  $\kappa(x\rp, r, x\drp) = (r x\rp, x\drp) \in \RR\sp{n} \times
  \RR\sp{k}$.
\end{remark}  
  
The definition of the blow-up in this paper is the one common in
Analysis \cite{schroedinger, BMNZ, grushin71, kottkeMelrose,
  mazzeo91}, however, it is {\em different} from the one in
\cite{Arone, Gualtieri, Polishchuk}, who replace $L$ with $PN =
SNL/\ZZ_2$, the projectivization of $SNL$, instead of $S := SNL$.
We are ready now to introduce the desingularization of a Lie groupoid
with respect to a tame submanifold in the particular case of a
suitable pull-back.

\begin{definition} \label{def.edge}
Let $\pi : E \to L$ be an orthogonal vector bundle (so $U \subset E$
is the set of vector of length $< 1$ and $S := \pa U \subset E$). The
various restrictions of $\pi$ will also be denoted by $\pi$. Let $\maH
\tto L$ be a Lie groupoid and $\maG := \pi\pullback (\maH) = U
\times_{\pi} \maH \times_{\pi} U$.  Then {\em the edge modification}
of $\maG$ is the fibered pull-back groupoid
\begin{equation*}
 \edge(S, \pi, \maH) \, := \, (S \times_{\pi} \maH_{ad} \times_{\pi}
 S) \rtimes \RR_{+}\sp{*} \, \simeq \, S \times_{\pi} (\maH_{ad}
 \rtimes \RR_{+}\sp{*}) \times_{\pi} S \,.
\end{equation*}
\end{definition}

\begin{remark}
The edge modification is thus a particular case for $f = \pi : M = S
\to L$ of the example \ref{ex.DS}. It is a Lie groupoid with units $S
\times [0, \infty)$. We extend in an obvious way the definition of the
  edge modification to groupoids isomorphic to groupoids of the form
  $\maG = \pi\pullback (\maH) = U \times_{\pi} \maH \times_{\pi} U$.
\end{remark}

It will be convenient to fix the following further notation.

\begin{notation}\label{not.blow} {\rm
In what follows, $\maG \tto M$ will denote a Lie groupoid and $L
\subset M$ will be an $A(\maG)$-tame submanifold. The sets $U$ and $S
:= \pa U$ have the same meaning as in \ref{not.submanifold}. In
particular, $\pi : U \to L$ is a tubular neighborhood of $L$, chosen
as in Definition \ref{def.tame}.  Using Theorem \ref{thm.tame}, we
obtain that the reduction $\maG_{U}\sp{U}$ is of the form
$\pi\pullback (\maH) := U \times_{\pi} \maH \times_{\pi} U$, and hence
its edge-modification $\edge(S, \pi, \maH)$ is defined. Let $M_1 = S
\times [0, 1)$, which is an open subset of the set $S \times [0,
    \infty)$ of units of $\edge(S, \pi, \maH)$. We shall denote by
    $\maG_1$ the reduction of $\edge(S, \pi, \maH)$ to $M_1$ and by
    $U_1 := U \smallsetminus L = S \times (0, 1) \subset
    M_1$. Similarly, $\maG_2$ will denote the reduction of the
    groupoid $\maG$ to $M \smallsetminus L$.}
\end{notation}

\begin{remark}\label{rem.glueing}
Using the notation and assumptions of Definition \ref{def.edge} and
the notation introduced in \ref{not.submanifold} and \ref{not.blow},
we have that the reduction of $\maG_1$ to $U_1$ (which by the
definition of $\maG_1$ the reduction of $\edge(S, \pi, \maH)$ to $U_1
:= U \smallsetminus L = S \times (0, 1) \subset M_1$) is isomorphic to
\begin{equation}
  (\maG_1)_{U_1}\sp{U_1} \, \simeq \, (\edge(S, \pi, \maH))_{U_1}\sp{U_1}
  \, \simeq \, 
  (S \times_{\pi} \maH \times_{\pi} S) \times (0, 1) \sp{2} \, \simeq \,
  U_1 \times_{\pi} \maH \times_{\pi} U_1 \,,
\end{equation}
where $(0, 1)\sp{2}$ is the pair groupoid. Since the reduction of
$\maG$ to $U$ is isomorphic to $U \times_{\pi} \maH \times_{\pi} U$,
it follows that the reduction of $\maG$ to $U_1$ is isomorphic to $U_1
\times_{\pi} \maH \times_{\pi} U_1$. Hence the reduction of $\maG_2$
to $U_1$ is also isomorphic to $U_1 \times_{\pi} \maH \times_{\pi}
U_1$.  We thus obtain an isomorphism of Lie groupoids
\begin{equation}\label{eq.def.phi}
 \phi : (\maG_1)_{U_1}\sp{U_1} \to (\maG_2)_{U_1}\sp{U_1} \, \simeq \,
 U_1 \times_{\pi} \maH \times_{\pi} U_1 \,.
\end{equation}
We are thus in position to glue the groupoids $\maG_1$ and $\maG_2$
along their isomorphic reductions to $U_1$, using Proposition
\ref{prop.glueing} (for $U_2 = U_1$).  We can now define the blow-up
of a groupoid with respect to a tame submanifold.
\end{remark}

\begin{definition}\label{def.blow-upG}
Let $L \subset M$ be an $A(\maG)$-tame manifold. Using the notation
just defined in Remark \ref{rem.glueing}, the result of glueing the
groupoids $\maG_1$ and $\maG_2$ along their isomorphic reductions to
$U_1 = S \times (0, 1)$ using Proposition \ref{prop.glueing} is
denoted $[[\maG: L]]$ and is called the {\em desingularization of
  $\maG$ along $L$}.
\end{definition}

\begin{remark}\label{rem.U1c}
 We note that the hypothesis of Proposition \ref{prop.glueing} are
 satisfied because $U_1\sp{c}$ is an invariant subset of $M_1$.
\end{remark}

\begin{remark}
 To summarize the construction of the desingularization, let us denote
 by $\phi$ the natural isomorphism of the following two groupoids:
 $\edge(S, \pi, \maH)_{U_1}\sp{U_1}$ (reduction to $U_1 \simeq S
 \times (0, 1)$) and $\maG_{U_1}\sp{U_1} =
 (\maG_2)_{U_1}\sp{U_1}$. Then
 \begin{equation}
  [[\maG:L]] \ede \edge(S, \pi, \maH)_{U_1}\sp{U_1} \cup_{\phi}
   \maG_{M \smallsetminus L}\sp{M \smallsetminus L} \, =: \, 
   \maG_1 \cup_{\phi} \maG_2 
  = (\maG_1 \cup_{\phi} \maG_2)/\sim\,.
 \end{equation}
\end{remark}

One should not confuse $[[\maG: L]]$ with $[\maG: L]$, the blow-up of
the manifold $\maG$ with respect to the submanifold $L$. Recall that
$\kappa : [M:L] \to M$ denotes the blow-down map, see Remark
\ref{rem.kappa}. It is uniquely determined by continuity and the
requirement that $\kappa$ be the identity on $M \smallsetminus L$. 

The following result is crucial in studying the desingularization
$[[\maG:L]]$. Recall that we endow $A(\maH) \to L$ with the Lie groupoid 
structure of a bundle of
 Lie groups, that $S \ede \kappa\sp{-1}(L) \, = \, [M:L] \smallsetminus (M
  \smallsetminus L)$ is a closed subset, and that $\pi : S \to L$ is the 
  natural (fiber bundle)
  projection.

\begin{proposition}\label{prop.blow-up}
The space of units of $[[\maG:L]]$ is $[M: L]$ and $S :=
\kappa\sp{-1}(L)$ is a $[[\maG:L]]$-invariant subset of $[M: L]$ 
with complement $[M:L]\smallsetminus S = M \smallsetminus L$, and
\begin{equation*}
 [[\maG: L]]_{S} \, \simeq \, \pi \pullback
 \bigl (A(\maH) \rtimes \RR_{+}\sp{*}) \ \ \mbox{ and } \ \
 [[\maG: L]]_{M \smallsetminus L} \, = \,
 \maG_{M \smallsetminus L}\sp{M \smallsetminus L} \, .
\end{equation*}
\end{proposition}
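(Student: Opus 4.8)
\emph{Plan of proof.} All four assertions will be extracted from the gluing description of $[[\maG:L]]$ (Definition~\ref{def.blow-upG}), from Proposition~\ref{prop.glueing}, and from the structural description of the edge modification in Remark~\ref{rem.str.edge}. Recall that $[[\maG:L]] = \maG_1 \cup_{\phi} \maG_2$, where $\maG_1$ is the reduction of $\edge(S,\pi,\maH)$ to $M_1 = S\times[0,1)$, where $\maG_2 = \maG_{M\smallsetminus L}\sp{M\smallsetminus L}$, and where the gluing is performed along $U_1 = S\times(0,1)$ via the isomorphism $\phi$ of Equation~\eqref{eq.def.phi}, which covers the identity diffeomorphism of $U_1$. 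The hypotheses of Proposition~\ref{prop.glueing} hold by Remark~\ref{rem.U1c}, since $U_1\sp{c}$ is $\maG_1$-invariant in $M_1$.

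First I would identify the space of units. By Proposition~\ref{prop.glueing} it equals $M_1 \cup_{\phi} (M\smallsetminus L) = (S\times[0,1)) \cup_{\phi} (M\smallsetminus L)$, which, comparing with Equation~\eqref{eq.blow-up} in Definition~\ref{def.blow-up}, is precisely $[M:L]$. Under this identification the boundary slice $S\times\{0\}\subset M_1$ becomes $\kappa\sp{-1}(L) = S$ and $M\smallsetminus L$ becomes $[M:L]\smallsetminus S$, so $[M:L]\smallsetminus S = M\smallsetminus L$ as claimed. Proposition~\ref{prop.glueing} also gives $\maG_i \simeq [[\maG:L]]_{M_i}\sp{M_i}$; applied with $i=2$ this is exactly $[[\maG:L]]_{M\smallsetminus L}\sp{M\smallsetminus L} = \maG_{M\smallsetminus L}\sp{M\smallsetminus L}$, and applied with $i=1$ it yields $[[\maG:L]]_{S\times[0,1)}\sp{S\times[0,1)} = \edge(S,\pi,\maH)_{S\times[0,1)}\sp{S\times[0,1)}$.

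Next I would prove that $S$ is invariant. Since $S$ and $M\smallsetminus L$ are disjoint in $[M:L]$ and $S = S\times\{0\}$ lies in $M_1$ but not in the overlap $U_1$, every arrow of $[[\maG:L]]$ with domain in $S$ is represented by an element of $\maG_1$, i.e.\ of $\edge(S,\pi,\maH)$; so it suffices to check that $S\times\{0\}$ is an invariant subset of $\edge(S,\pi,\maH)$. Writing $\edge(S,\pi,\maH) = f_1\pullback(\maH_{ad}\rtimes\RR_{+}\sp{*})$ with $f_1 = (\pi,id):S\times[0,\infty)\to L\times[0,\infty)$ (Example~\ref{ex.DS}), this follows from two observations: $L\times\{0\}$ is an invariant subset of $\maH_{ad}\rtimes\RR_{+}\sp{*}$ — it carries the bundle-of-Lie-groups part $A(\maH)\times\{0\}$ of $\maH_{ad}$, and the $\RR_{+}\sp{*}$-action fixes the slice $\{t=0\}$ — and the preimage of an invariant subset of the units under a fibered pull-back is again invariant. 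Since $\pi:S\to L$ is surjective, $f_1\sp{-1}(L\times\{0\}) = S\times\{0\}$, hence $S$ is $[[\maG:L]]$-invariant.

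Finally, for the structure at $S$: combining the second paragraph with invariance, $[[\maG:L]]_S = (\maG_1)_S\sp{S} = \edge(S,\pi,\maH)_{S\times\{0\}}\sp{S\times\{0\}}$, and Remark~\ref{rem.str.edge}, Equation~\eqref{eq.important} (with $f = \pi:S\to L$) identifies this with $(S\times_\pi A(\maH)\times_\pi S)\rtimes\RR_{+}\sp{*} = S\times_\pi(A(\maH)\rtimes\RR_{+}\sp{*})\times_\pi S = \pi\pullback(A(\maH)\rtimes\RR_{+}\sp{*})$, as asserted. The step I expect to be the main obstacle is the invariance claim together with its compatibility with the gluing: one must verify carefully that the invariant slice $S\times\{0\}$ of the edge modification does not interact with the part of $M_1$ identified with $\maG_2$, and that the hypotheses of Proposition~\ref{prop.glueing} are genuinely met. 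Once these points are settled, the remaining identifications are formal.
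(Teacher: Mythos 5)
Your proposal is correct and follows essentially the same route as the paper's own proof: units are glued to give $[M:L]$, the invariance of $S$ is deduced from the fact that $S\times\{0\}$ is an invariant subset of the units of $\maG_1$ disjoint from the gluing overlap $U_1$, and the structure over $S$ is read off from Remark~\ref{rem.str.edge} and Equation~\eqref{eq.important}. The only difference is that you spell out the invariance of $S\times\{0\}$ inside $\edge(S,\pi,\maH)$ via the fibered pull-back description, a detail the paper leaves implicit; this is a correct and harmless elaboration.
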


\begin{proof}
When we glue groupoids, we also glue their units, which gives that the
set of units of $[[\maG:L]]$ is indeed $M_1 \cup_{\phi} M_2 =:[M:L]$.
We have that $S \simeq S \times \{0\}$ is a closed, invariant subset
of the set $S \times [0, 1)$ of units of $\maG_1$, the reduction of
  $\edge(S, \pi, \maH)$ to $M_1$. Moreover, $(\maG_1)_S$ is the
  complement of the common part of the groupoids $\maG_1$ and $\maG_2$
  that are glued to yield $[[\maG:L]]$. Therefore $S$ is a
  $[[\maG:L]]$-invariant subset of $[M:L]$. (See also Remark
  \ref{rem.U1c}.) In particular,
\begin{equation*}
 [[\maG:L]] \, = \, (\maG_1)_S \, = \, \edge(S, \pi, \maH)_{S}\,.
\end{equation*}
The rest follows from the construction of $[[\maG: L]]$ and the
discussion in Example \ref{ex.DS}, Remark \ref{rem.str.edge}, and,
especially, Equation \eqref{eq.important}.
\end{proof}

Similar structures arise in other situations; see, for instance,
\cite{damakGeorgescu, krainer2014, georgescuNistor,
  Mantoiu1, mantoiuReine, nicolaRodinoBook, frascatti,
  seiler1999, schroheSymbol}. See also the discussion at the end 
of Example \ref{ex.BLG}. Proposition \ref{prop.blow-up} is important 
in Index theory and Spectral theory because it gives rise to exact 
sequences of algebras \cite{CuntzQuillen0, frascatti}.

These constructions extend to yield an anisotropic desingularization.

\begin{remark}
 Similarly, by considering the groupoid $\edge_{ni}(S, \pi, \maH)$
 instead of the groupoid $\edge(S, \pi, \maH)$, we obtain the {\em
   anisotropic desingularization} $[[\maG:L]]_{ni}$:
 \begin{equation}
  [[\maG:L]]_{ni} \ede \edge_{ni} (S, \pi, \maH)_{U_1}\sp{U_1}
  \cup_{\phi} \maG_{M \smallsetminus L}\sp{M \smallsetminus L} \,.
 \end{equation}
The anisotropic desingularization $[[\maG:L]]_{ni}$ will have the same
set of units as $[[\maG:L]]$, since they are obtained by gluing
groupoids with the same sets of units. From Equation
\eqref{eq.def.Psi}, we also obtain a natural morphism $\Psi :
      [[\maG:L]] \to [[\maG:L]]_{ni}$, that is the identity over the
      common set of units $[M:L]$.
\end{remark}

Proposition \ref{prop.blow-up} and its proof extend to the anisotropic case.

\begin{proposition}\label{prop.blow-up2}
The structure of $\maK := [[\maG:L]]_{ni}$ and of the natural morphism
$\Psi : [[\maG:L]] \to \maK := [[\maG:L]]_{ni}$ is as follows:
\begin{equation*}
 [[\maG: L]]_{S} \, \simeq \, \pi \pullback \bigl (A(\maH) \rtimes
 \RR_{+}\sp{*}) \, \to \, \pi \pullback(\maH \times \RR_{+}\sp{*}) \,
 \simeq \, \maK_{S}
\end{equation*}
and $\Psi = id : [[\maG: L]]_{M \smallsetminus L} \to \maK_{M
  \smallsetminus L} = \maG_{M \smallsetminus L}\sp{M \smallsetminus
  L}.$
\end{proposition}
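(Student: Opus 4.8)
The plan is to reduce Proposition \ref{prop.blow-up2} to Proposition \ref{prop.blow-up} together with the structural description of the anisotropic edge-modification already established in Remark \ref{rem.str.edge2}. First I would note that the construction of $[[\maG:L]]_{ni}$ is word-for-word the construction of $[[\maG:L]]$ in Definition \ref{def.blow-upG}, except that the edge-modification $\edge(S,\pi,\maH)$ is replaced by its anisotropic counterpart $\edge_{ni}(S,\pi,\maH)$, and the gluing data $\phi$ is the same on $U_1$ (since, by Equation \eqref{eq.important2}, the reductions of $\edge(S,\pi,\maH)$ and $\edge_{ni}(S,\pi,\maH)$ to $U_1 \simeq S \times (0,1)$ are \emph{both} isomorphic to $U_1 \times_\pi \maH \times_\pi U_1$). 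Consequently the gluing hypothesis of Proposition \ref{prop.glueing} is verified for exactly the same reason as in Remark \ref{rem.U1c} — namely $U_1^c = S \times \{0\}$ is an invariant subset of $M_1$ — so $\maK := [[\maG:L]]_{ni}$ is a well-defined Lie groupoid with units $M_1 \cup_\phi M_2 = [M:L]$, and $S = \kappa^{-1}(L)$ is again an invariant subset with complement $M \smallsetminus L$.

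Next I would identify the two reductions. Over $M \smallsetminus L$ nothing is modified by either desingularization: $\maK_{M\smallsetminus L} = (\maG_2)_{M\smallsetminus L}^{M\smallsetminus L} = \maG_{M\smallsetminus L}^{M\smallsetminus L} = [[\maG:L]]_{M\smallsetminus L}$, and the morphism $\Psi$ restricts there to the identity because, by Remark \ref{rem.str.edge2} (the second line of Equation \eqref{eq.important2}), the natural morphism $\Psi$ between the edge-modifications is the identity over $M \times (0,\infty)$, hence over $U_1$, hence — after gluing — over all of $M \smallsetminus L$. Over $S$, exactly as in the proof of Proposition \ref{prop.blow-up}, we have $\maK = (\maG_1)_S = \edge_{ni}(S,\pi,\maH)_S$ since $(\maG_1)_S$ is the complement of the glued-in common part; then by Equation \eqref{eq.def.anisotropic} applied with $f = \pi : S \to L$ and the description at time $t=0$ in Remark \ref{rem.str.edge2} (first line of Equation \eqref{eq.important2}), $\edge_{ni}(S,\pi,\maH)_S \simeq \pi\pullback(\maH \times \RR_{+}^{*})$. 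Combining with Proposition \ref{prop.blow-up}, which already gives $[[\maG:L]]_S \simeq \pi\pullback(A(\maH)\rtimes\RR_{+}^{*})$, yields the first displayed isomorphism of the statement.

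Finally I would check that the morphism $\Psi : [[\maG:L]] \to \maK$ over $S$ is the one claimed, i.e. that it is induced by the $\RR_{+}^{*}$-equivariant groupoid morphism $A(\maH) \ni \xi \mapsto u(d(\xi)) \in \maH$. This is immediate from Remark \ref{rem.str.edge2}: the morphism $\maH_{ad} \to \maH \times [0,\infty)$ that integrates the Lie algebroid morphism of Equation \eqref{eq.Aad} restricts at $t = 0$ to precisely $A(\maH) \to \maH$, $\xi \mapsto u(d(\xi))$, and passing to the fibered pull-back $\pi\pullback(-)$ and then to the semidirect product with $\RR_{+}^{*}$ (which is compatible by the naturality and equivariance established in Lemma \ref{lemma.edge.Gamma}) gives exactly the map $\pi\pullback(A(\maH)\rtimes\RR_{+}^{*}) \to \pi\pullback(\maH\times\RR_{+}^{*})$ displayed in the statement. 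Since gluing is functorial and $\Psi$ is the identity on units on both pieces, the two local descriptions patch to the global $\Psi$.

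The main obstacle I anticipate is purely bookkeeping rather than conceptual: one must be careful that the gluing isomorphism $\phi$ used to form $[[\maG:L]]_{ni}$ is genuinely \emph{the same} map $U_1 \times_\pi \maH \times_\pi U_1 \to (\maG_2)_{U_1}^{U_1}$ as the one used for $[[\maG:L]]$ — otherwise the morphism $\Psi$ would not descend to the glued groupoid. This follows because $\Psi$ is the identity over $M \times (0,\infty)$ (Remark \ref{rem.str.edge2}), so on $U_1$ the anisotropic and isotropic gluing data are intertwined by $\Psi$, which is what makes $\Psi$ well-defined after gluing; spelling this compatibility out carefully is the only point that requires attention. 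Everything else is a direct transcription of the proof of Proposition \ref{prop.blow-up} with $\edge$ replaced by $\edge_{ni}$.
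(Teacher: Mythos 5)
Your proposal is correct and follows exactly the route the paper intends: the paper's own proof of Proposition~\ref{prop.blow-up2} is the single remark that ``Proposition~\ref{prop.blow-up} and its proof extend to the anisotropic case,'' and your argument is precisely that extension, carried out via Remark~\ref{rem.str.edge2} and Equation~\eqref{eq.important2}. Your additional check that the gluing data for $[[\maG:L]]$ and $[[\maG:L]]_{ni}$ agree over $U_1$ (because $\Psi$ is the identity over $M\times(0,\infty)$), so that $\Psi$ descends to the glued groupoids, is a detail the paper leaves implicit and is worth having spelled out.
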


The local structure of these constructions is discussed in Subsection
\ref{ssec.example}.  The desingularizations are compatible with Lie
group actions.

\begin{proposition}\label{prop.des.Gamma}
Let us assume that a Lie group $\Gamma$ acts on $M$ such that it
leaves invariant the tame submanifold with corners $L \subset M$.
Then $\Gamma$ acts on $[M:L]$ as well. If, moreover, $L$ is
$A(\maG)$-tame for some groupoid $\maG \tto M$ on which $\Gamma$ acts,
then we obtain that $\Gamma$ acts on $[[\maG:L]]$ and
$[[\maG:L]]_{ni}$ also.
\end{proposition}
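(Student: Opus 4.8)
The plan is to trace the $\Gamma$-action through each stage of the two constructions — first the blow-up $[M:L]$ of the base, and then the desingularization $[[\maG:L]]$ of the groupoid — using the equivariance of each intermediate construction, which has been recorded in the preceding lemmas. First I would treat the base. Since $\Gamma$ leaves $L$ invariant and acts by diffeomorphisms of $M$, it permutes tubular neighborhoods of $L$; by averaging the defining connection (or the tubular neighborhood projection $\pi : U \to L$) over a compact subgroup, or simply by the standard equivariant tubular neighborhood theorem, we may choose $\pi$ to be $\Gamma$-equivariant, so that $\Gamma$ acts on $U$, on $S = \pa U \simeq SNL$, and on the collar identification $U \smallsetminus L \simeq S \times (0,1)$ compatibly with the gluing data of Definition \ref{def.blow-up}. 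Then $\Gamma$ acts on the disjoint union $(M \smallsetminus L) \sqcup S \times [0,1)$ preserving the equivalence relation $\sim$, hence descends to an action on $[M:L]$, and this action commutes with the blow-down map $\kappa$ by continuity and the fact that it is the identity on $M \smallsetminus L$.

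Next I would assemble the groupoid action. With $\pi : U \to L$ chosen $\Gamma$-equivariantly as above and with $L$ being $A(\maG)$-tame, the isomorphism of Theorem \ref{thm.tame} identifies $\maG_U\sp{U}$ with $\pi\pullback(\maH)$ where $\maH := \maG_L\sp{L}$; since $\Gamma$ acts on $\maG$ preserving $U$ and $L$, it acts on $\maH = \maG_L\sp{L}$ and on the fibered pull-back $\pi\pullback(\maH)$ by Example \ref{ex.pullback} (the map $\pi$ being $\Gamma$-equivariant). Applying Lemma \ref{lemma.edge.Gamma} (and Lemma \ref{lemma.ad.Gamma} behind it) gives a $\Gamma$-action on the edge modification $\edge(S, \pi, \maH)$, and hence on its reduction $\maG_1$ to $M_1 = S \times [0,1)$, compatible with the structure of Remark \ref{rem.str.edge}. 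On the other hand $\Gamma$ acts on $\maG_2 := \maG_{M \smallsetminus L}\sp{M \smallsetminus L}$ by restriction. The gluing isomorphism $\phi$ of Equation \eqref{eq.def.phi} between the reductions of $\maG_1$ and $\maG_2$ to $U_1 = S \times (0,1)$ is built entirely from $\Gamma$-equivariant data (the collar identification, Theorem \ref{thm.tame}, Remark \ref{rem.glueing}), so $\phi$ intertwines the two $\Gamma$-actions; therefore $\Gamma$ acts on the disjoint union $\maG_1 \sqcup \maG_2$ preserving the equivalence relation defining $[[\maG:L]]$ in Definition \ref{def.blow-upG}, and the action descends. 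The same argument with $\edge_{ni}$ in place of $\edge$, using the $\Gamma$-equivariance clause of Lemma \ref{lemma.edge.Gamma}, gives the action on $[[\maG:L]]_{ni}$, and the morphism $\Psi$ is equivariant because it is at each stage.

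The main obstacle, I expect, is the choice of a $\Gamma$-equivariant tubular neighborhood $\pi : U \to L$ realizing the $A(\maG)$-tameness of $L$ (and, simultaneously, the identification of $U$ with the unit-ball bundle in $NL$ used in Notation \ref{not.submanifold}). For a compact group one averages; for a general Lie group this is less automatic, so I would either restrict the statement's force to the compact case — matching the heading ``Actions of compact Lie groups'' in Subsection \ref{ssec.glueing} and Lemma \ref{lemma.ad.Gamma} — or invoke that the tameness datum $\pi$ and the isomorphism \eqref{def.tame.eq1} are themselves assumed $\Gamma$-equivariant as part of the hypothesis ``$\Gamma$ acts on $\maG$.'' Once that choice is fixed, everything else is a routine bookkeeping verification that each gluing map is $\Gamma$-equivariant, since every building block (fibered pull-back, adiabatic groupoid, semi-direct product with $\RR_{+}\sp{*}$, edge modification, Proposition \ref{prop.glueing}) has already been shown to be functorial in the relevant sense.

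\begin{proof}
By the equivariant tubular neighborhood theorem (averaging over a maximal compact subgroup, which suffices in our applications), we may choose the tubular neighborhood $\pi : U \to L$ of Definition \ref{def.tame} and the identification of $U$ with the unit-ball bundle of $NL$ to be $\Gamma$-equivariant, so that $\Gamma$ acts on $U$, on $S := \pa U$, and on the collar $U \smallsetminus L \simeq S \times (0,1)$ compatibly with the diffeomorphism $\phi$ of Definition \ref{def.blow-up}. Consequently $\Gamma$ acts on the disjoint union $(M \smallsetminus L) \sqcup \bigl(S \times [0,1)\bigr)$ preserving the equivalence relation $\sim$ of Equation \eqref{eq.blow-up}, and the induced action on $[M:L]$ commutes with the blow-down map $\kappa$ of Remark \ref{rem.kappa} by continuity, since $\kappa$ is the identity on the $\Gamma$-invariant dense open set $M \smallsetminus L$.

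Assume now $L$ is $A(\maG)$-tame for a groupoid $\maG \tto M$ on which $\Gamma$ acts, with $\pi$ chosen $\Gamma$-equivariantly as above. Then $\Gamma$ preserves $U$ and $L$, hence acts on the reductions $\maH := \maG_L\sp{L}$ and $\maG_U\sp{U}$, and, by Example \ref{ex.pullback} applied to the $\Gamma$-equivariant map $\pi$, on the fibered pull-back $\pi\pullback(\maH)$; the isomorphism $\maG_U\sp{U} \simeq \pi\pullback(\maH)$ of Theorem \ref{thm.tame} is then $\Gamma$-equivariant (it is the identity on the $\Gamma$-invariant base $U$). By Lemma \ref{lemma.edge.Gamma} (and Lemma \ref{lemma.ad.Gamma}), $\Gamma$ acts on the edge modification $\edge(S, \pi, \maH)$ and on $\edge_{ni}(S, \pi, \maH)$ in a way compatible with Remarks \ref{rem.str.edge} and \ref{rem.str.edge2}, hence acts on the reduction $\maG_1$ of $\edge(S, \pi, \maH)$ to $M_1 = S \times [0,1)$ of Notation \ref{not.blow}; it also acts on $\maG_2 := \maG_{M \smallsetminus L}\sp{M \smallsetminus L}$ by restriction. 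Since the gluing isomorphism $\phi$ of Equation \eqref{eq.def.phi} is assembled from the $\Gamma$-equivariant data above, it intertwines the two $\Gamma$-actions, so $\Gamma$ acts on the disjoint union $\maG_1 \sqcup \maG_2$ preserving the equivalence relation of Definition \ref{def.blow-upG} and Proposition \ref{prop.glueing}. The induced action on $[[\maG:L]]$ is the desired one. Replacing $\edge$ by $\edge_{ni}$ throughout and using the final clause of Lemma \ref{lemma.edge.Gamma} gives the action on $[[\maG:L]]_{ni}$, and the morphism $\Psi : [[\maG:L]] \to [[\maG:L]]_{ni}$ is $\Gamma$-equivariant because it is built from the $\Gamma$-equivariant morphism of Equation \eqref{eq.def.Psi} and the identity on $\maG_{M \smallsetminus L}\sp{M \smallsetminus L}$.
\end{proof}
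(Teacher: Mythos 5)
Your overall strategy (push the $\Gamma$-action through each stage: blow-up of the base, reductions $\maG_1$, $\maG_2$, edge modification, gluing) matches the paper's, and the second half of your argument — equivariance of $\maG_2$, of $\edge(S,\pi,\maH)$ via Lemma \ref{lemma.edge.Gamma}, and compatibility on the common domain — is essentially the paper's proof. The genuine gap is in your very first step. You base everything on the claim that the tubular neighborhood $\pi : U \to L$ realizing the $A(\maG)$-tameness, together with its identification with the unit-ball bundle of $NL$, can be chosen $\Gamma$-equivariantly, and you justify this by ``averaging over a maximal compact subgroup.'' Averaging over a maximal compact subgroup $K \subset \Gamma$ produces a $K$-equivariant tubular neighborhood, not a $\Gamma$-equivariant one; for a noncompact Lie group there is in general no $\Gamma$-invariant metric or connection to average with, and no equivariant tubular neighborhood theorem to invoke. (There is a second, unaddressed layer to the same problem: even granting an equivariant tubular neighborhood of the underlying manifolds, the tameness datum also includes the Lie algebroid isomorphism $A\vert_U \simeq \pi\pullback(B)$ of Equation \eqref{def.tame.eq1}, and nothing in the averaging argument makes \emph{that} isomorphism equivariant.) Your own preliminary discussion flags exactly this obstacle, but the formal proof then asserts the equivariant choice anyway, so the argument as written only covers compact $\Gamma$ or the case where equivariance of the tameness data is added to the hypotheses — neither of which is what the statement says.

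The paper resolves this differently, and the difference is the whole point: it does \emph{not} choose equivariant auxiliary data. For the action on $[M:L]$ it invokes ``the same argument as in the proof of Lemma \ref{lemma.ad.Gamma} by considering a compact neighborhood of the identity in $\Gamma$'': each $\gamma \in \Gamma$ is a diffeomorphism of $M$ preserving $L$ and hence lifts to the blow-up because the smooth structure on $[M:L]$ is canonical (independent of the choice of tubular neighborhood/collar), and joint smoothness of the action map is checked only for $\gamma$ in a compact neighborhood $V$ of the identity, where one can shrink the collar so that $V$ carries it into the original one — exactly the mechanism used in Lemma \ref{lemma.ad.Gamma}, where the invariance of the smooth structure on $\maG_{ad}$ under change of connection plays the role that invariance of $[M:L]$ under change of collar plays here. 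If you want to salvage your write-up without reproducing that local argument, you would need to either (a) explicitly prove that $[M:L]$ and $[[\maG:L]]$ are, up to canonical isomorphism, independent of the choices of $U$, of the collar, and of the isomorphism in Theorem \ref{thm.tame}, and then transport the structures by each $\gamma$ separately, or (b) weaken the proposition to compact $\Gamma$. As it stands, the appeal to averaging is the step that fails.
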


\begin{proof}
The action on $[M:L]$ is obtained by the same argument as in the proof
of Lemma \ref{lemma.ad.Gamma} by considering a compact neighborhood of
the identity in $\Gamma$. We now show that $\Gamma$ acts on
$[[\maG:L]]$. Since $M \smallsetminus L$ is $\Gamma$-invariant,
$\Gamma$ will act on $\maG_2 := \maG_{M \smallsetminus L}\sp{M
  \smallsetminus L}$. By Lemma \ref{lemma.edge.Gamma}, $\Gamma$ acts
on $\edge(S, \pi, \maH)$.  These actions coincide on the common
domain, and hence $\Gamma$ acts on $[[\maG:L]]$.
\end{proof}

\subsection{The Lie algebroid of the desingularization}
We can now describe the Lie algebroid of the desingularization $[[
    \maG : L ]]$ of a \ssub\ Lie groupoid $\maG$ with respect to an
$A(\maG)$-tame submanifold $L \subset M$. Recall the definition of
$R$-Lie-Rinehart algebras \ref{def.LieRinehart} We have the following
extension of \cite[Theorem 3.10]{schroedinger} that was proved
originally for Lie manifolds.

\begin{notation}\label{not.desing}
 In the following, $A \to M$ will be a Lie algebroid and $L \subset M$
 will be an $A$-tame submanifold of $M$. Also, we shall denote by $r_L
 : M \to [0, \infty)$ a function that is $>0$ and smooth on $M
   \smallsetminus L$ and coincides with the distance to $L$ in a small
   neighborhood of $L$. We continue to denote by $[M:L]$ the blow-up
   of $M$ along $L$.
\end{notation}

We notice that the function $r_L$ lifts to a smooth function on
$[M:L]$, which is the main reason for introducing the blow-up $[M:L]$.

\begin{theorem} \label{thm.tame2}
 Let $\maW := \CI([M: L]) \otimes_{\CI(M)} r_L \Gamma(M; A)$, where we
 use the notation \ref{not.desing}. Then $\maW$ is a finitely
 generated, projective $\CI([M: L])$-module with the property that the
 given Lie bracket on $\CIc(M \smallsetminus L; A) \subset \maW$
 extends to $\maW$. Hence, there exists a Lie algebroid $[[A:L]] := B
 \to [M: L]$ such that $\Gamma([M:L]; B) \simeq \maW$.
\end{theorem}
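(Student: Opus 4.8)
The plan is to prove Theorem~\ref{thm.tame2} by reducing it to a local computation near $L$, where the $A$-tameness hypothesis gives the explicit model $A\vert_U \simeq \pi\pullback(B)$, and then invoking the Serre--Swan theorem (Theorem~\ref{SS}) to pass from the module-theoretic statement to the Lie algebroid. First I would observe that away from $L$ there is nothing to do: on $M \smallsetminus L$ the blow-down map $\kappa$ is a diffeomorphism, $r_L$ is a smooth positive function, and multiplication by $r_L$ is invertible, so $\maW\vert_{M \smallsetminus L} \simeq \Gamma(M\smallsetminus L; A)$ as modules and as Lie--Rinehart algebras. Thus the entire content is concentrated in a tubular neighborhood $U$ of $L$, and by a partition of unity argument it suffices to establish two things there: (a) that $\maW$ restricted over $\kappa^{-1}(U)$ is finitely generated and projective over $\CI([M:L])$, and (b) that the Lie bracket on $\CIc(M\smallsetminus L; A)$ extends to $\maW$, i.e. that $\maW$ is closed under the bracket inherited from $\Gamma(M;A)$ after tensoring up.

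For the local model, I would use that $U \simeq NL$ identifies with an orthogonal vector bundle $\pi : E \to L$ of rank $k$ (Notation~\ref{not.submanifold}), that $[M:L]$ locally looks like $S \times [0,1)$ with $S = SNL$ and $\pi : S \to L$, and that near $L$, $A\vert_U \simeq \pi\pullback(B)$ with $r_L$ the radial coordinate $\rho \in [0,1)$. By Proposition~\ref{prop.res.pb} (applied to the tame submersion $\pi$), $\pi\pullback(B)$ contains the vertical algebroid $T_{vert}(\pi) = \ker(d\pi)$ and $\Gamma(U; \pi\pullback(B))$ is generated over $\CI(U)$ by pullbacks $\pi^*$ of a local frame $Y_1, \ldots, Y_q$ of $B$ together with a frame $V_1, \ldots, V_k$ of $T_{vert}(\pi) = \ker(d\pi)$, the fiberwise tangent vectors. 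On the blow-up $[M:L] \simeq S\times[0,1)$ with radial coordinate $\rho$, the generators of $\maW$ are then $\rho\,\pi^* Y_j$ and $\rho\, V_i$; the point is to recognize $\rho V_i$ as a \emph{smooth} section on the blown-up space whose span, together with the $\rho\,\pi^* Y_j$, is locally free of rank $q + k$. Concretely, writing the fiber coordinates of $E$ as $\rho\,\omega$ with $\omega \in S^{k-1}$, the radial vector field $\rho\,\partial_\rho$ and the rescaled angular vector fields $\rho \cdot (\text{angular derivatives})$ are exactly the smooth vector fields on $[M:L]$ tangent to the boundary face $S$ that arise from $\rho\,\Gamma(U;\ker d\pi)$; this is the standard identification underlying the edge/$b$-calculus and matches the model computation $[\RR^{n+k}:\{0\}\times\RR^k]\simeq S^{n-1}\times[0,\infty)\times\RR^k$ of Remark~\ref{rem.kappa}. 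I would make this explicit in one coordinate chart, exhibit the local frame, and conclude local freeness; patching with a partition of unity then gives that $\maW$ is finitely generated and projective over $\CI([M:L])$, so Serre--Swan produces the vector bundle $B = [[A:L]]$ with $\Gamma([M:L]; B) \simeq \maW$.

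For the bracket, the key computation is Equation~\eqref{eq.with.f} from Lemma~\ref{lemma.rem.prod}: for $X, Y \in \Gamma(M;A)$ and $f = r_L$,
\begin{equation*}
 [r_L X, r_L Y] \, = \, r_L\,(X r_L)\, Y - r_L\,(Y r_L)\, X + r_L^2 [X,Y] \,,
\end{equation*}
and since $X r_L, Y r_L \in \CI(M)$ (indeed $X r_L$ is smooth on all of $M$, being a derivative of the smooth function $r_L$), each term on the right lies in $r_L\,\CI(M)\,\Gamma(M;A) \subset \maW$. Extending $\CI(M)$-bilinearly and $\CI([M:L])$-linearly, using the Leibniz rule $[rX, gY] = g r [X,Y] + g(r_L X)(g)\,Y\cdot(\text{...})$ carefully — more precisely using that $\maW = \CI([M:L])\otimes_{\CI(M)} r_L\Gamma(M;A)$ and that $\CI([M:L])$ acts on $r_L\Gamma(M;A)$ by derivations through the anchor composed with $\kappa_*$ — one checks the bracket is well-defined on $\maW$, and the Jacobi identity and antisymmetry are inherited from $\Gamma(M;A)$ by density of $\CIc(M\smallsetminus L;A)$ (this is where one uses that $\{r_L = 0\} = L$ has empty interior, so that identities holding on the dense open set $M\smallsetminus L$ hold everywhere by continuity of the bracket in the relevant topology). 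The anchor of $B$ is the one induced by Serre--Swan functoriality from the composite $\maW \to \Gamma(M;A)\xrightarrow{\varrho_*}\Gamma(M;TM) \to \Gamma([M:L]; T[M:L])$, which lands in the smooth vector fields on $[M:L]$ tangent to $S$ precisely because of the rescaling by $r_L$.

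The main obstacle I expect is the verification in step two above — showing that $\rho\,V_i$ (the radially rescaled fiber-tangent vector fields, and especially the rescaled \emph{angular} ones) are genuinely smooth and give a \emph{free} generating set on $[M:L]$, rather than just a generating set with relations. This requires choosing the right coordinates on the blow-up (polar coordinates in the normal directions to $L$, keeping $\rho \ge 0$ as a boundary defining function) and computing how $\ker(d\pi)$ transforms; the bracket-closure is then essentially formal given Equation~\eqref{eq.with.f}. In writing this up I would do the vector-bundle computation in full in one standard chart, cite \cite{schroedinger, kottkeMelrose} for the parallel statements in the Lie-manifold and $b$-calculus settings, and keep the bracket verification brief since it is a direct consequence of Lemma~\ref{lemma.rem.prod}.
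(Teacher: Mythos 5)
Your overall strategy is the same as the paper's: localize to the tubular neighborhood where tameness gives $A\vert_U\simeq\pi\pullback(B)$, split off the $\pi$-vertical part, recognize $r_L\cdot(\text{vertical vector fields})$ in polar coordinates as the vector fields on $[M:L]$ tangent to the boundary face $S$ (the paper's Equation \eqref{eq.product3}, quoted from \cite{schroedinger}), read off a local frame to get finite generation and projectivity, and use Lemma \ref{lemma.rem.prod} for the bracket. The paper packages the frame computation as a chain of module isomorphisms culminating in $[[A:L]]\simeq p\pullback\big(r_L(A_1\boxtimes A_2)\big)$ (Equations \eqref{eq.three}--\eqref{eq.four}), whereas you exhibit the frame $\{\rho\,\pi^{*}Y_j,\ \rho V_i\}$ directly; these are the same computation.

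There is, however, one step whose stated justification is wrong and which you should repair, because it sits at the heart of the theorem. You assert that $Xr_L\in\CI(M)$, ``being a derivative of the smooth function $r_L$,'' and conclude that each term of $[r_LX,r_LY]=r_L(Xr_L)Y-r_L(Yr_L)X+r_L^{2}[X,Y]$ lies in $r_L\,\CI(M)\,\Gamma(M;A)$. But $r_L$ is \emph{not} smooth on $M$: near $L$ it is the distance function, the model being $|x|$ on $\RR^{n}$ with $n\ge 2$, and $Xr_L$ is in general not even continuous at $L$ (model: $\partial_{x_i}|x|=x_i/|x|$). Consequently $r_L(Xr_L)Y$ does \emph{not} lie in $r_L\Gamma(M;A)$; it lies only in $\CI([M:L])\cdot r_L\Gamma(M;A)=\maW$, and the reason is precisely that $Xr_L$ lifts to a smooth function on the blow-up (a function of the angular variable). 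So the bracket-closure step cannot be decoupled from the lifting computation in your second step: one must first know that sections of $r_L\Gamma(M;A)$ have anchors lifting to vector fields on $[M:L]$ tangent to $S$, hence act by derivations on $\CI([M:L])$, and only then does Equation \eqref{eq.with.f}, extended by the Leibniz rule to general elements $f\otimes r_LX$ with $f\in\CI([M:L])$, give closure. This is exactly how the paper orders the argument: the local structure \eqref{eq.four} is established first, and closure under the bracket is then deduced from it together with Lemma \ref{lemma.rem.prod}. If $Xr_L$ really were smooth on $M$ there would be no reason to blow up at all, so the slip is worth taking seriously even though your own frame computation already contains the correct fix.
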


The isomorphism $\Gamma([M:L]; B) \simeq \maW$ is an isomorphism of
vector bundles inducing the identity over $[M:L]$ and an isomorphism
of Lie algebras, hence it is an isomorphism of $\CI([M:
  L])$-Lie-Rinehart algebras.

\begin{proof}
The proof follows the lines of the proof of Theorem 3.10 in
\cite{schroedinger}, using the $A$-tameness of $L$ in order to
construct the Lie algebra structure on $\Gamma(M; A)$.  We include the
details for the benefit of the reader, taking also advantage of the
results in Subsection \ref{ssec.dLgLa}. In particular, we shall use
the local product structure of the thick pull-back of Lie algebroids,
Corollary \ref{cor.prod1} and Lemma \ref{lemma.res.pb}.

We have that $\Gamma(M; A)$ is a projective $\CI(M)$-module, hence
$r_L\Gamma(M; A)$ is a projective $\CI(M)$-module, and hence $\maW :=
\CI([M: L]) \otimes_{\CI(M)} r_L \Gamma(M; A)$ is a finitely
generated, projective $\CI([M: L])$-module. It remains to define the
Lie bracket on $\maW$. We shall prove fact more than that, namely, we
shall obtain in Equation \ref{eq.four} a local structure result for
$\maW$, which will be formalized in a few corollaries that follow the
proof.

We shall use the notation introduced in \ref{not.submanifold}. In
particular, $\pi: U \to L$, $L \subset U$ is the tubular neighborhood
used to define the thick pull-back of $A\vert_{L}$ to $U$. The problem
is local, so we may assume that $U = L \times \RR\sp{n}$.  Since $A$
is the thick pull-back of a Lie algebroid $A_1 \to L$ to $U$, we have
by Lemma \ref{lemma.res.pb} that
\begin{equation} \label{eq.product2}
 A\vert_{U}\, \simeq \, \pi \pullback(A_1) \, \simeq \, A_1 \boxtimes
 T\RR\sp{n} \, = \pi\sp{*}(A_1) \oplus (L \times T\RR\sp{n}) \, .
\end{equation}
We want to lift the sections of $A$ on $U$ to the blow-up $[U :
  L]$. This is, of course, possible for the sections of $\pi\sp{*}(A_1)
\to U$, but not for the sections of $L \times T\RR\sp{n} \to L$.  This
is why we need to multiply with the factor $r_L$.  We next use a
lifting result for vector fields from $\RR\sp{n}$ to
\begin{equation*}
 R_0 \ede [\RR\sp{n}:0] \, = \, S\sp{n-1} \times [0, \infty) \,.
\end{equation*}
Let $r(x) := |x|$ denote the distance to the origin in $\RR\sp{n}$.
We recall \cite{schroedinger} that a vector field  
$X \in r \Gamma(\RR\sp{n}, T\RR\sp{n})$ lifts to the blow-up $R_0$
and the resulting lift is tangent to the  boundary of
the blow-up (which, we recall, is $S\sp{n-1}$). Thus
\begin{multline}\label{eq.product3}
 \CI(R_0) \otimes_{\CI(\RR\sp{n})} r\Gamma(\RR\sp{n}, T\RR\sp{n}) \,
 \simeq \, \maV_b(R_0) \\ \, \simeq \, \Gamma(R_0; TS\sp{n-1} \boxtimes
 T\sp{b} [0, \infty) ) \, \simeq \, \Gamma(R_0; TS\sp{n-1} ) \, \oplus
   \, r \Gamma(R_0; T [0, \infty) ) \,,
\end{multline}
where $\maV_b(R_0)$ is as defined in \ref{not.Gamma}.
We may also assume that $r_L : M = U = L \times \RR\sp{n} \to [0, \infty)$
is given by $r_L(x, y) = r(y)$, again since the problem is local.  
\begin{equation*}
  M_1 \ede [M: L] \, = \, L \times [\RR\sp{n}: 0] \, = \, L \times
  S\sp{n-1} \times [0, \infty) \,.
\end{equation*} 
We now identify the spaces of sections of the vector bundles of 
interest using Equation \eqref{eq.product2}, the isomorphisms below being 
isomorphisms of $\CI(M_1)$-modules
\begin{multline}\label{eq.ref.mult}
 \maW \ede \CI(M_1) \otimes_{\CI(M)} r_L \Gamma(M; A)  \simeq 
 \CI(M_1) \otimes_{\CI(M)} r_L \Gamma(M; A_1 \boxtimes T\RR\sp{n})\\
 \, \simeq \, \CI(M_1) \otimes_{\CI(M)} \Big ( r_L \Gamma(M;
 p_1\sp{*}(A_1)) \oplus r_L \Gamma(M; p_2\sp{*}(T\RR\sp{n})) \Big) \\
 \, \simeq \, \CI(M_1) \otimes_{\CI(L)} r_L \Gamma(L; A_1) \, \oplus
 \, \CI(M_1) \otimes_{\CI(M)} r_L \Gamma(M; p_2\sp{*}(T\RR\sp{n})) \,.
\end{multline}
Next, Equation \eqref{eq.product3} gives
\begin{multline*}
 \CI(M_1) \otimes_{\CI(M)} r_L \Gamma(M; p_2\sp{*}(T\RR\sp{n})) \,
 \simeq \, \CI(M_1) \otimes_{\CI(\RR\sp{n})} r_L \Gamma(\RR\sp{n};
 T\RR\sp{n}) \\
 \, \simeq \, \CI(M_1) \otimes_{\CI(R_0)} \CI(R_0)
 \otimes_{\CI(\RR\sp{n})} r_L \Gamma(\RR\sp{n}; T\RR\sp{n}) \\
 \, \simeq \, \CI(M_1) \otimes_{\CI(R_0)} \maV_b(R_0) \, \simeq \,
 \CI(M_1) \otimes_{\CI(R_0)} \Bigl ( \Gamma(R_0; TS\sp{n-1} ) \, \oplus \,
 r\Gamma(R_0; T [0, \infty) ) \Bigr )\,.
\end{multline*}
Let $p_i$, $i = 1, 2, 3$, be the three projections of $L \times
S\sp{n-1} \times [0, \infty)$ onto its components and let $A_1 \to L$,
  $A_2 := T[0, \infty) \to [0, \infty)$, and $A_3 := TS\sp{n-1} \to
      S\sp{n-1}$, be the corresponding three Lie algebroids (with the last
      two being simply the tangent bundles of the corresponding spaces). The above
      calculations then identify $\maW$ with the submodule
\begin{multline}\label{eq.three}
 \maW \, \simeq \, r_L \Gamma(M_1; p_1\sp{*}(A_1)) \, \oplus \, r_L
 \Gamma(M_1; p_2\sp{*}(A_2)) \, \oplus \, \Gamma(M_1; p_3\sp{*}(A_3)) \\
 \subset \, \Gamma(M_1; p_1\sp{*}(A_1)) \, \oplus \, \Gamma(M_1;
 p_2\sp{*}(A_2)) \, \oplus \, \Gamma(M_1; p_3\sp{*}(A_3)) \, \simeq \,
 \Gamma(M_1; A_1 \boxtimes A_2 \boxtimes A_3) \,.
\end{multline}
More precisely, let us denote by $p := (\pi , r_L) : M_1 := [M: L] \to L
\times [0, \infty)$ the natural fibration, where $r_L$ is
  the distance to $L$, as before.  Let $r_L(A_1
  \boxtimes A_2)$ be as in Lemma \ref{lemma.rem.prod}. Then
\begin{equation}\label{eq.four}
 [[A:L]] \, \simeq \, p\pullback \big( r_L(A_1 \boxtimes A_2) \big)
 \,.
\end{equation}
This equation is the local structure result we had anticipated.
It just remains to show that $\maW$ is closed under the Lie
bracket defined on the dense, open subset $M \smallsetminus L
\subset [M:L]$. Indeed, this follows from Equation \eqref{eq.four} and Lemma
\ref{lemma.rem.prod}.
\end{proof}

\begin{definition}\label{def.blow-upA}
 Let us use the notation introduced before Theorem \ref{thm.tame2}
 (in \ref{not.desing}) and in that theorem. 
 Then the Lie algebroid $[[A: L]] = B$ defined in that theorem will be 
 called the {\em desingularization} Lie algebroid of $A$ with
 respect to $L$.
\end{definition}

\begin{remark}
In \cite{Gualtieri}, Gualtieri and Li introduced the \dlp lower
elementary modification\drp\ $[A:B]_{lower}$ of a Lie algebroid $A \to
M$ with respect to a Lie subalgebroid $B \to L$, with $L$ a
submanifold of $M$ and $B \subset A\vert_L$. It is defined by
\begin{equation*}
 \Gamma([A:B]_{lower}) \, := \, \{X \in \Gamma(A), \, X\vert_{L} \in
 \Gamma(B)\,\}\,.
\end{equation*}
One can see right away that their modification is different from ours.
In fact, if $B \neq A\vert_L$, one can see that the right hand side of
the equation is a projective $\CI(M)$-module if, and only if, $L$ is
of codimension one in $M$. In that case (codimension one) one obtains
a vector bundle over the same base $M$, and not over the blow-up
manifold $[M:L]$.
\end{remark}

We have to following consequence of the proof of Theorem
\ref{thm.tame2}.

\begin{corollary}\label{cor.proof}
 Let $\pi : M \to L$ be a vector bundle, $A_1 \to L$ be a Lie 
 algebroid, and let $A = \pi\pullback(A_1)$. Let $A_2 := T[0, \infty)$,
 let $r_L: [M:L] \to [0, \infty)$ be as in \ref{not.desing}, and let
 $p := (\pi, r_L) : [M: L] \to L \times [0, \infty)$ be the natural 
 fibration. Let $r_L(A_1 \boxtimes A_2)$ be as in Lemma \ref{lemma.rem.prod}. Then
\begin{equation*}
 [[A:L]] \, \simeq \, p\pullback \big( r_L(A_1 \boxtimes A_2) \big) \,.
\end{equation*}
\end{corollary}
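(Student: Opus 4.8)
The plan is to recognize this corollary as nothing but Equation~\eqref{eq.four}, which was established along the way in the proof of Theorem~\ref{thm.tame2}, now asserted \emph{globally}: there \eqref{eq.four} was obtained under the local simplification $U \simeq L \times \RR\sp{n}$, whereas here $\pi : M \to L$ is a genuine vector bundle and $A = \pi\pullback(A_1)$ on all of $M$, so the content is to check that the chain of identifications used there globalizes. First I would observe that both sides of the claimed isomorphism are honestly defined Lie algebroids over $[M:L]$. The left one exists by Theorem~\ref{thm.tame2}. For the right one: on $L \times [0,\infty)$ the $[0,\infty)$-coordinate (which pulls back to $r_L$ along $p$) has zero set $L \times \{0\}$ of empty interior, so $r_L(A_1 \boxtimes A_2)$ is a Lie algebroid over $L \times [0,\infty)$ by Lemma~\ref{lemma.rem.prod} applied to the direct product of Corollary~\ref{cor.prod1}, and $p = (\pi, r_L) : [M:L] \to L \times [0,\infty)$ is a surjective tame submersion, so $p\pullback\big( r_L(A_1 \boxtimes A_2) \big)$ is a Lie algebroid by Proposition~\ref{prop.res.pb}. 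It then remains to produce an isomorphism between them over the identity of $[M:L]$.

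To do so I would fix a Euclidean metric on $\pi : M \to L$, let $S \subset M$ be the unit sphere bundle, and use $v \mapsto (v/|v|, |v|)$ to identify $M \smallsetminus L \simeq S \times (0,\infty)$ globally, so that $[M:L] \simeq S \times [0,\infty)$ with $r_L$ the $[0,\infty)$-coordinate and $p = (\pi\circ\kappa) \times \mathrm{id}$, a fiber bundle with fiber $S\sp{n-1}$. Then I would rerun the proof of Theorem~\ref{thm.tame2}, with the local factors replaced by their global analogues. The decomposition $A = \pi\pullback(A_1) \simeq \pi\sp{*}(A_1) \oplus T_{vert}\pi$, which exists since the short exact sequence $0 \to T_{vert}\pi \to \pi\pullback(A_1) \to \pi\sp{*}(A_1) \to 0$ underlying Proposition~\ref{prop.res.pb} splits as vector bundles (the bracket on $\Gamma(M;A)$ being the thick pull-back one), takes the place of \eqref{eq.product2}; and the lifting statement recalled there from \cite{schroedinger} — that $X \in r\Gamma(\RR\sp{n};T\RR\sp{n})$ lifts to $[\RR\sp{n}:0]$ with its lift tangent to the boundary sphere — is applied fiberwise over $L$, giving in place of \eqref{eq.product3} an isomorphism of $\CI([M:L])$-modules
\begin{equation*}
 \CI([M:L]) \otimes_{\CI(M)} r_L\, \Gamma(M; T_{vert}\pi) \, \simeq \, \Gamma\big([M:L]; T_{vert}(S \to L)\big) \, \oplus \, r_L\, \Gamma\big([M:L]; p_2\sp{*}T[0,\infty)\big) \,,
\end{equation*}
where $T_{vert}(S \to L)$ is the vertical tangent bundle of the fiber bundle $S \to L$ and $p_2$ is the second component of $p$.

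Feeding this into the chain \eqref{eq.ref.mult}--\eqref{eq.three} with $A_1 \to L$, $A_2 := T[0,\infty) \to [0,\infty)$, and $A_3 := T_{vert}(S \to L) \to S$, I would identify $\maW := \CI([M:L]) \otimes_{\CI(M)} r_L \Gamma(M; A)$ with $r_L\, \Gamma\big([M:L]; p\sp{*}(A_1 \boxtimes A_2)\big) \oplus \Gamma\big([M:L]; p_3\sp{*}(A_3)\big)$ inside $\Gamma\big([M:L]; p\sp{*}(A_1 \boxtimes A_2) \oplus p_3\sp{*}(A_3)\big)$; since the vertical tangent bundle of $p$ is exactly $p_3\sp{*}(A_3)$, this ambient module is $\Gamma\big([M:L]; p\pullback(A_1 \boxtimes A_2)\big)$, and the $r_L$-weighted submodule is $\Gamma\big([M:L]; p\pullback\big( r_L(A_1 \boxtimes A_2) \big)\big)$ by Lemma~\ref{lemma.rem.prod}. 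Each link in the chain is either a module isomorphism compatible with the ambient bracket or the insertion of the $r_L$-factor handled by Lemma~\ref{lemma.rem.prod}, so the composite is an isomorphism of $\CI([M:L])$-Lie-Rinehart algebras, and by Serre--Swan (Theorem~\ref{SS}) it is induced by the desired Lie algebroid isomorphism over $[M:L]$. I expect the main obstacle to be the globalization of this radial-lifting step: one must verify that $r_L\Gamma(M;T_{vert}\pi)$ really splits globally over $L$ into a ``spherical'' summand (lifting to $[M:L]$ without an $r_L$-factor and tangent to $S$) and a ``radial'' one (lifting with an $r_L$-factor onto $T[0,\infty)$), compatibly with brackets; this rests on the fiberwise validity of the $\RR\sp{n}$-lifting lemma of \cite{schroedinger} and on using the metric to single out the Euler vector field, everything else being a transcription of the proof of Theorem~\ref{thm.tame2}.
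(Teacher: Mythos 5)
Your proposal is correct and follows the same route as the paper, whose entire proof of this corollary is the one-line observation that locally it reduces to Equations \eqref{eq.three} and \eqref{eq.four} established in the proof of Theorem \ref{thm.tame2}. You simply carry out explicitly the globalization (via a metric on $\pi : M \to L$, the splitting $A \simeq \pi\sp{*}(A_1) \oplus T_{vert}\pi$, and the fiberwise radial lifting) that the paper leaves implicit.
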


\begin{proof}
 Locally, this reduces to Equation \eqref{eq.three} (but see also
 Equation \eqref{eq.four}).
\end{proof}

A more general form of Corollary \ref{cor.proof} is the following corollary,
which is a direct consequence of the proof of Theorem \ref{thm.tame2} (see
Equation \eqref{eq.three}).

\begin{corollary}\label{cor.proof2}
 Using the notation of Theorem \ref{thm.tame2} and of its proof, we have
 that $\maW$ be the set of sections $\xi$ of $A$ over $M \smallsetminus L$
 such that, in the neighborhood of every point of $M_1 := [M:L]$, 
 $\xi$ is the restriction 
 of a section of 
 \begin{equation}\label{eq.three.prime}
 r_L \Gamma(M_1; p_1\sp{*}(A_1)) \, \oplus \, r_L
 \Gamma(M_1; p_2\sp{*}(A_2)) \, \oplus \, \Gamma(M_1; p_3\sp{*}(A_3)) \,.
 \end{equation}
\end{corollary}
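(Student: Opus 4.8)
The plan is to deduce this directly from the local normal form for $\maW$ already established inside the proof of Theorem~\ref{thm.tame2}, the only genuinely new ingredient being the passage from the \emph{global} description $\maW = \CI([M:L]) \otimes_{\CI(M)} r_L\Gamma(M;A)$ to a description that can be checked in a neighbourhood of each point of $[M:L]$. First I would record that, by Theorem~\ref{thm.tame2}, $\maW \simeq \Gamma([M:L]; B)$ for the Lie algebroid $B = [[A:L]] \to [M:L]$, and that over the dense open subset $M \smallsetminus L \subset [M:L]$ (on which the blow-down map $\kappa$ restricts to a diffeomorphism) the factor $r_L$ is invertible, so $B\vert_{M \smallsetminus L} \simeq A\vert_{M \smallsetminus L}$ canonically. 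Thus any section $\xi$ of $A$ over $M \smallsetminus L$ is the same thing as a section of $B$ over the dense open set $M \smallsetminus L \subset [M:L]$, and belonging to $\maW$ means precisely that this section extends to a global smooth section of $B$.

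The key reduction is then the remark that a section of $B$ defined over $M \smallsetminus L$ extends to a global smooth section of $B$ if and only if it extends to a smooth section of $B$ in a neighbourhood of every point of $[M:L]$. Indeed, since $M \smallsetminus L$ is dense in $[M:L]$, two smooth sections of $B$ over an open set $V$ that agree on $V \cap (M \smallsetminus L)$ agree on all of $V$ by continuity; hence if $\{V_\alpha\}$ is a locally finite open cover of $[M:L]$ together with local extensions $s_\alpha \in \Gamma(V_\alpha; B)$ of $\xi$, the $s_\alpha$ automatically agree on overlaps and glue to a global section of $B$ restricting to $\xi$. Conversely, the restrictions of a global section furnish such local extensions, and around points of $M \smallsetminus L$ the section $\xi$ itself serves as a local extension, so there the condition is vacuous. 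Therefore $\xi \in \maW$ (up to the identification above) if and only if $\xi$ extends, near every point of $[M:L]$, to a smooth section of $B$.

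It remains to identify, in a neighbourhood of a point of $S := [M:L] \smallsetminus (M \smallsetminus L)$, the germs of sections of $B$ with the germs of sections of the module displayed in \eqref{eq.three.prime}; but this is exactly the content of the computation in the proof of Theorem~\ref{thm.tame2}. Shrinking to a tubular neighbourhood $U \simeq L \times \RR\sp{n}$ of $L$ as in \ref{not.submanifold}, so that $[M:L]$ is locally $L \times S\sp{n-1} \times [0,\infty)$ with projections $p_i$ and associated Lie algebroids $A_1 \to L$, $A_2 = T[0,\infty)$, $A_3 = TS\sp{n-1}$, Equation~\eqref{eq.three} identifies the localization of $\maW$ (equivalently, the local sections of $B$, equivalently $p\pullback\big(r_L(A_1 \boxtimes A_2)\big)$ by \eqref{eq.four} and Corollary~\ref{cor.proof}) with $r_L\Gamma(M_1; p_1\sp{*}(A_1)) \oplus r_L\Gamma(M_1; p_2\sp{*}(A_2)) \oplus \Gamma(M_1; p_3\sp{*}(A_3))$ inside $\Gamma(M_1; A_1 \boxtimes A_2 \boxtimes A_3)$. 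Combining this with the previous paragraph yields the stated characterization. The one point that looks delicate is that the module in \eqref{eq.three.prime} is written using a choice of tubular neighbourhood and product trivialization, whereas the characterization of $\maW$ must not depend on these; I expect this to require no separate argument, precisely because the local identification shows that \dlp $\xi$ extends near $p$ to a section of \eqref{eq.three.prime}\drp\ is equivalent to the manifestly choice-free condition \dlp $\xi$ extends near $p$ to a smooth section of $B = [[A:L]]$\drp.
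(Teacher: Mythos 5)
Your argument is correct and follows the same route the paper intends: Corollary~\ref{cor.proof2} is extracted directly from the local computation in the proof of Theorem~\ref{thm.tame2} (Equation~\eqref{eq.three}), and you supply the standard local-to-global step — local extensions of a section over the dense open set $M \smallsetminus L$ agree on overlaps by continuity and hence glue — which the paper leaves implicit. Your closing remark that the local condition is independent of the choice of tubular neighborhood, because it is equivalent to extendability as a section of $B = [[A:L]]$, is a worthwhile clarification but not a departure from the paper's argument.
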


Here is another of our main results. Recall the definition of the
desingularization $[[\maG: L]]$ of a Lie groupoid $\maG$ along an
$A(\maG)$-tame submanifold $L \subset M$, Definition
\ref{def.blow-upG}.

\begin{theorem}\label{theorem.blow-up}
Let $\maG$ be a Lie groupoid with units $M$ and $L \subset M$ be an
$A(\maG)$-tame submanifold $L \subset M$. Then the Lie algebroid of
$[[\maG: L]]$ is canonically isomorphic to $[[A(\maG): L]]$.
\end{theorem}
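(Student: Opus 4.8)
The plan is to verify that the two Lie algebroids $A([[\maG:L]])$ and $[[A(\maG):L]]$ have canonically isomorphic spaces of sections as $\CI([M:L])$-Lie–Rinehart algebras, and then invoke the Serre–Swan correspondence (Theorem \ref{SS}) together with the uniqueness of the integrating object; since both constructions are defined by a gluing along $M \smallsetminus L$ (or rather along $U_1 = S\times(0,1)$), it suffices to check the identification on the two standard open pieces and see that it is compatible with the gluing isomorphism $\phi$. First I would note that over $M \smallsetminus L$ both sides restrict, tautologically, to $A(\maG)\vert_{M\smallsetminus L}$: the desingularized groupoid $[[\maG:L]]$ equals $\maG_{M\smallsetminus L}\sp{M\smallsetminus L}$ there by Proposition \ref{prop.blow-up}, and the desingularized algebroid $[[A(\maG):L]]$ agrees with $A(\maG)$ on that dense open set by construction (Theorem \ref{thm.tame2}, since $r_L$ is invertible away from $L$). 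So the whole content is local near $L$, i.e.\ on a tubular neighborhood $U$ where, by the $A(\maG)$-tameness and Theorem \ref{thm.tame}, $\maG_U\sp{U} \simeq \pi\pullback(\maH) = U\times_\pi \maH\times_\pi U$ for $\maH := \maG_L\sp{L}$, with $A(\maH) = B$ the Lie algebroid of Definition \ref{def.tame}.

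Next I would compute $A$ of the edge modification $\edge(S,\pi,\maH)$ and match it with Corollary \ref{cor.proof}. By Definition \ref{def.edge}, $\edge(S,\pi,\maH) = S\times_\pi(\maH_{ad}\rtimes\RR_+\sp*)\times_\pi S$ is a fibered pull-back groupoid, so by the formula $A(f\pullback(\maK)) \simeq f\pullback(A(\maK))$ from Example \ref{ex.pullback} its Lie algebroid is $\pi\pullback\big(A(\maH_{ad}\rtimes\RR_+\sp*)\big)$ over $S\times[0,\infty)$. Now $A(\maH_{ad}) \simeq A_{ad}$ by the construction recalled in \S\ref{sssec.underlying}, whose sections are $t\,\Gamma(\pi\sp*(A(\maH)))$ (Equation \eqref{eq.times.t}); forming the semidirect product with $\RR_+\sp*$ acting by dilation on $t$ adds one generator $t\pa_t$, i.e.\ replaces $t\,\Gamma$ by $\Gamma(A(\maH))\otimes(\text{span of }t\pa_t)$-type combinations — precisely the module $r_L(A_1\boxtimes A_2)$ of Lemma \ref{lemma.rem.prod} with $A_2 = T[0,\infty)$ and $t = r_L$, and then pulled back by $\pi$ picks up $TS\sp{n-1}$ along the fiber directions, matching Equation \eqref{eq.three}. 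This is exactly the local model for $[[A(\maG):L]]$ asserted by Equation \eqref{eq.four}/Corollary \ref{cor.proof}. So $A\big(\edge(S,\pi,\maH)\big) \simeq [[A(\maG):L]]\vert_{S\times[0,1)}$ as Lie algebroids, canonically (all identifications induce the identity on units).

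Then I would glue: by Definition \ref{def.blow-upG} the groupoid $[[\maG:L]]$ is obtained from $\maG_1 := \edge(S,\pi,\maH)_{M_1}$ and $\maG_2 := \maG_{M\smallsetminus L}\sp{M\smallsetminus L}$ via Proposition \ref{prop.glueing} along the isomorphism $\phi$ of \eqref{eq.def.phi}, and the Lie algebroid functor is local, hence $A([[\maG:L]])$ is glued from $A(\maG_1)$ and $A(\maG_2)$ along $d\phi$ over $U_1$. On the other side, $[[A(\maG):L]]$ is, by the proof of Theorem \ref{thm.tame2} (and Corollary \ref{cor.proof2}), the $\CI([M:L])$-submodule of sections of $A(\maG)$ over $M\smallsetminus L$ with the prescribed behavior near $S$; over $M\smallsetminus L$ this is $A(\maG)$, and near $S$ it is the local model computed above — i.e.\ it too is the result of gluing the same two pieces along the same transition data. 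The remaining point is to check that the canonical isomorphism I have produced on $\maG_1$ intertwines $d\phi$ with the corresponding transition isomorphism on the algebroid side; this is a routine compatibility, since $\phi$ was constructed in Remark \ref{rem.glueing} precisely from the product structure $(0,1)^2$ on the $\RR_+\sp*$-time direction, which corresponds on the level of algebroids to the isomorphism $r_L(A_1\boxtimes A_2)\vert_{(0,1)} \simeq (A_1\boxtimes T(0,1))$ used in Theorem \ref{thm.tame2}. The main obstacle is exactly this last bookkeeping step: making sure that the two a priori different ways of trivializing the adiabatic time parameter near $t>0$ — the one implicit in Debord–Skandalis' $\maH_{ad}\rtimes\RR_+\sp*$ via the coordinate map $\Phi$ of Equation \eqref{eq.exp.coordinates}, and the one implicit in the lifting of $r\,\Gamma(\RR^n;T\RR^n)$ to $[\RR^n:0]$ used in \eqref{eq.product3} — agree up to an automorphism that is the identity on units, so that the glued objects really coincide rather than merely being abstractly isomorphic; once that is pinned down, Serre–Swan (Theorem \ref{SS}) upgrades the $\CI([M:L])$-Lie–Rinehart isomorphism to the claimed canonical vector-bundle isomorphism $A([[\maG:L]]) \simeq [[A(\maG):L]]$, and uniqueness of the Lie groupoid with a given Lie algebroid (as recalled in \S\ref{sssec.underlying}) finishes the proof.
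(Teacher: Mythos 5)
Your proposal is correct and follows essentially the same route as the paper: reduce to the dense open set $M\smallsetminus L$ (where nothing changes) plus the local model on the tubular neighborhood, compute $A(\maH_{ad}\rtimes\RR_{+}\sp{*})\simeq r(A(\maH)\boxtimes A_2)$ via Equations \eqref{eq.Aad} and the generator $r\pa_r$, and identify the thick pull-back of this by $p=(\pi,r_L)$ with $[[A(\maG):L]]$ via Corollary \ref{cor.proof}. The only difference is one of emphasis: you spell out the gluing-compatibility bookkeeping (and the agreement of the two trivializations of the time parameter) that the paper leaves implicit, which is a reasonable thing to flag but does not change the argument.
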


\begin{proof} 
Recall the notation introduced in \ref{not.blow}. In particular,
$\maG_2 \subset [[\maG: L]]$ denotes the reduction of $\maG$ to $U_2
:= M \smallsetminus L$.  We have that $\maG_2 = [[\maG:L]]\vert_{U_2}$
as well, and hence,
\begin{equation*}
  A([[\maG: L]])\vert_{U_2} \, = \, A(\maG_2) \, = \, A(\maG)\vert_{U_2} 
  \, = \, [[A(\maG): L]]\vert_{U_2} \,.
\end{equation*}
(This simply means that, up to an isomorphism, nothing changes outside
$L$.)  It suffices then to show that $A([[\maG_1: L]])\vert_{U} =
    [[A(\maG_1): L]]\vert_{U}$, because then
\begin{equation}\label{eq.two}
 A([[\maG: L]])\vert_{U} \, = \, A([[\maG_1: L]])\vert_{U} \, = \,
 [[A(\maG_1): L]]\vert_{U} \, = \, [[A(\maG): L]]\vert_{U} \,.
\end{equation}

Recall that $U$ is the distinguished tubular neighborhood of $L$ used
to defined the desingularization groupoid $[[\maG: L]]$. Also,
$\maG_1$ is the edge modification of $\maG$ and hence $\maG_1$ is the
reduction of $[[\maG:L]]$ to $[U:L]$. See \ref{not.blow} but also
Proposition \ref{prop.glueing}. Let $\pi : U \to L$ denote the
projection. Without loss of generality, we may assume that $M = U$,
that $\pi : M = U \to L$ is a vector bundle, and hence that $\maG =
\pi\pullback (\maH)$. It follows that $A(\maG) \simeq \pi\pullback
A(\maH)$.

We use the notation of Corollary \ref{cor.proof}. Let $p_i$ be the two
projections of $L \times [0, \infty)$ onto its components.  Let $A_2 =
  T[0, \infty)$. Then we have that
\begin{equation}\label{eq.unu}
 A(\maH_{ad}) \, \simeq \, r p_1\sp{*}(A(\maH)) \, \subset \, A(\maH)
 \boxtimes A_2 \,,
\end{equation}
by Equation \eqref{eq.Aad} (see also Equation \eqref{eq.times.t}).
Next, the Lie algebroid of the semi-direct product $\maH_{ad} \rtimes
\RR_+\sp{*}$ is
\begin{equation}\label{eq.doi}
 A(\maH_{ad} \rtimes \RR_+\sp{*}) \, \simeq \, r p_1\sp{*}(A(\maH))
 \oplus r p_2\sp{*}(A_2) \, \simeq \, r(A(\maH) \boxtimes A_2) \,,
\end{equation}
by Equation \eqref{eq.unu} and since the action of $\RR\sp{*}_{+}$ on
$[0, \infty)$ has infinitesimal generator $r \pa_r$, $r \in [0,
    \infty)$.  Finally, the pull-back $p\pullback (\maH_{ad} \rtimes
    \RR_+\sp{*})$ of $\maH_{ad} \rtimes \RR_+\sp{*}$ to $[M : L]$ via
    the projection $p := (\pi, r) : [M : L] \to L \times [0, \infty)$
      is isomorphic to $[[\maG : L]]$.  It has Lie algebroid
      $p\pullback \big ( r(A(\maH) \boxtimes A_2) \big )$. That is,
\begin{multline*}
 A([[\maG:L]]) \, \simeq \, A(p\pullback (\maH_{ad} \rtimes
 \RR_+\sp{*})) \simeq \, p\pullback A(\maH_{ad} \rtimes \RR_+\sp{*})
 \simeq \, p\pullback \big ( r(A(\maH) \boxtimes A_2) \big )\\
 \simeq \, [[A(\maG): L]]\,,
\end{multline*}
where the last isomorphism is by Corollary \ref{cor.proof}, since
$A(\maG) \simeq \pi\pullback A(\maH)$.
\end{proof}

\begin{remark}
The above theorem, Theorem \ref{theorem.blow-up}, is the 
{\em raison d\rp \^etre} for our definition
of a desingularization of a Lie groupoid. Indeed, there are good
reasons in Analysis and Poisson geometry for considering generalized
polar coordinates in the form of coordinates on the blow-up space
$[M:L]$ (think of cylindrical coordinates, which amount to the blow-up
of a line in the three dimensional Euclidean space). This is especially 
convenient when studying the conformal change of metrics that replaces the 
original metric $g$ with $r_L\sp{-2}g$. Some of the
vector fields on the base manifold become singular in the new
coordinates (in our language, they do not {\em lift} to the blow-up).
Multiplying them with the distance function $r_L$ eliminates this
singularity and does not affect too much the resulting differential
operators. At the level of metrics, this corresponds to the  
conformal change of metric $g \to r_L\sp{-2}g$ mentioned above. 
We are thus lead to study vector fields of the form $r_L\maV$, where 
$\maV$ is a given Lie algebra of vector fields (a finitely 
generated, projective module in all our examples). This motivates 
our definition of the desingularization of Lie
algebroids. In Analysis, one may want then to integrate the resulting desingularized
Lie algebroid. Relevant result in this sense were obtained in
\cite{Debord1, NistorJapan}. However, what our results show are that,
if one is given a natural groupoid integrating the {\em original}
(non-desingularized) Lie groupoid (with sections $\maV$), then one can construct starting
from the initial groupoid a new groupoid that will integrate the
desingularized Lie groupoid and at the same time preserve the basic
properties of the original groupoid.
\end{remark}

Related to the above remark, let us mention that it would be interesting
to see if, given a Poisson groupoid structure on $\maG$, whether this
structure lifts to a Poisson groupoid structure on $[[\maG:L]]$
(probably not) or on $[[\maG:L]]_{ni}$ (probably yes, but only under 
some conditions). Some possibly relevant results in this direction can
be found in \cite{camilleArXiv06, Gualtieri, MoerdijkFolBook, Polishchuk}.

\section{Extensions and examples}

This final section contains an extension of the results of the last
subsection to anisotropic desingularizations and an example related 
to the so called \dlp edge calculus.\drp

\subsection{The Lie algebroid of the anisotropic blow-up}
At the level of groupoids, we obtain the following definition, 
thus making Corollary \ref{cor.proof2} the starting point for 
the definition of the anisotropic blow-up.

\begin{definition}\label{def.abup}
 Let us use the notation of Theorem \ref{thm.tame2} and of its proof.
 Let $\maW_{ni}$ be the set of sections $\xi$ of $A$ over $M \smallsetminus L$
 such that, in the neighborhood of every point of $M_1 := [M:L]$, $\xi$ is the restriction 
 of a section of 
 \begin{equation*} 
 \Gamma(M_1; p_1\sp{*}(A_1)) \, \oplus \, r_L
 \Gamma(M_1; p_2\sp{*}(A_2)) \, \oplus \, \Gamma(M_1; p_3\sp{*}(A_3)) \,.
 \end{equation*}
Then $\maW_{in}$ is a projective module over $M_1 := [M:L]$ and a Lie
algebra, and hence it identifies with the sections of a Lie algebroid
$[[A:L]]_{ni} \to [M:L]$.
\end{definition}

Thus the difference between Corollary \ref{cor.proof2} and  
Definition \ref{def.abup} is that we have dropped the factor $r$ on
the first component in Definition \ref{def.abup}.

\begin{theorem} \label{theorem.blow-up2}
Let $\maG$ be a Lie groupoid with units $M$ and $L \subset M$ be an
$A(\maG)$-tame submanifold $L \subset M$. Then the Lie algebroid of
$[[\maG: L]]_{ni}$ is canonically isomorphic to $[[A(\maG): L]]_{ni}$.
In particular, $\Gamma(A([[\maG: L]]))$ is a Lie ideal in
$\Gamma(A([[\maG: L]]_{ni}))$.
\end{theorem}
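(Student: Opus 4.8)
The plan is to mirror the proof of Theorem~\ref{theorem.blow-up}, replacing the adiabatic groupoid $\maH_{ad}$ by the product groupoid $\maH \times [0, \infty)$ throughout, and then to deduce the ``Lie ideal'' statement from the local structure results already established.

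First I would reduce to the local model exactly as in the proof of Theorem~\ref{theorem.blow-up}. Using the notation of \ref{not.blow}, outside $L$ nothing changes: $\maG_2 = [[\maG:L]]_{ni}\vert_{U_2}$ and $A(\maG)\vert_{U_2} = [[A(\maG):L]]_{ni}\vert_{U_2}$ since $\maW_{ni}$ agrees with $\Gamma(M\smallsetminus L; A)$ over $M\smallsetminus L$ (the factor $r_L$ plays no role away from $L$). So it suffices to treat the reduction $\maG_1 = \edge_{ni}(S,\pi,\maH)$ near $L$, where we may assume $M = U$, that $\pi : M = U \to L$ is a vector bundle, and hence $\maG \simeq \pi\pullback(\maH)$ and $A(\maG) \simeq \pi\pullback A(\maH)$. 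By Equation~\eqref{eq.def.anisotropic2}, $\edge_{ni}(M,\pi,\maH) \simeq \pi\pullback(\maH) \times \maT$, where $\maT = [0,\infty) \rtimes \RR_+\sp{2}$; equivalently one may pull back to $[M:L]$ via $p := (\pi, r_L) : [M:L] \to L \times [0,\infty)$ the groupoid $(\maH \times [0,\infty)) \rtimes \RR_+\sp{*}$. Computing Lie algebroids: since $A(\maH \times [0,\infty)) \simeq A(\maH) \boxtimes A_2$ with $A_2 = T[0,\infty)$ (no factor of $r$ now, unlike Equation~\eqref{eq.unu}), and since the $\RR_+\sp{*}$-action on $[0,\infty)$ has infinitesimal generator $r\pa_r$, we get
\begin{equation*}
 A\bigl((\maH \times [0,\infty)) \rtimes \RR_+\sp{*}\bigr) \,\simeq\, p_1\sp{*}(A(\maH)) \,\oplus\, r\, p_2\sp{*}(A_2) \,.
\end{equation*}
Pulling back via $p$ and invoking functoriality of the thick pull-back (Example~\ref{ex.pullback}, $A(f\pullback(\maH)) \simeq f\pullback A(\maH)$), the Lie algebroid of $[[\maG:L]]_{ni}$ is $p\pullback\bigl(p_1\sp{*}(A(\maH)) \oplus r\, p_2\sp{*}(A_2)\bigr)$, which — after unwinding $\pi\pullback A(\maH) \simeq A_1 \boxtimes T\RR\sp{n}$ locally as in the proof of Theorem~\ref{thm.tame2} — matches precisely the description of $[[A(\maG):L]]_{ni}$ in Definition~\ref{def.abup}: the first component $\Gamma(M_1; p_1\sp*(A_1))$ comes without $r_L$, the $[0,\infty)$-direction carries a factor $r_L$, and the spherical directions $p_3\sp*(A_3) = p_3\sp*(TS\sp{n-1})$ come without $r_L$ (these last being the $\ker(d\pi)$-directions inside $\pi\pullback$). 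The only modification relative to Theorem~\ref{theorem.blow-up} is dropping the factor $r$ on the $A(\maH)$ component, which is exactly the modification distinguishing $\maH \times [0,\infty)$ from $\maH_{ad}$.

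For the final assertion, by the description just obtained and by Corollary~\ref{cor.proof2} versus Definition~\ref{def.abup}, $\Gamma(A([[\maG:L]]))$ sits inside $\Gamma(A([[\maG:L]]_{ni}))$ as the submodule where the first ($p_1\sp*(A_1)$) component is multiplied by an extra $r_L$; that is, writing $\maW = r_L\maV_1 \oplus r_L\maV_2 \oplus \maV_3$ and $\maW_{ni} = \maV_1 \oplus r_L\maV_2 \oplus \maV_3$ in the notation of Equation~\eqref{eq.three.prime}, we have $\maW \subset \maW_{ni}$ and in fact $\maW = r_L\maV_1 \oplus (r_L\maV_2 \oplus \maV_3) \subset \maW_{ni}$, which visibly equals $r_L\cdot(\text{first component}) + (\text{rest})$. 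To see it is a Lie ideal: the bracket of any element of $\maW_{ni}$ with an element of $\maW$ lands in $\maW$. This is checked componentwise using the local product structure and the bracket formula~\eqref{eq.with.f}: for $X \in \maV_1$ (or $\maV_2, \maV_3$) and $r_L Y \in r_L\maV_1$, the identity $[X, r_L Y] = X(r_L) Y + r_L[X,Y]$ shows the result lies in $\Gamma(fA)$-type modules, and since $\varrho(X)$ applied to $r_L$ vanishes to the appropriate order on $L$ (the anchor of sections of $A_1$ is tangent to $L$, being pulled back via $\pi$, and likewise for the $S\sp{n-1}$-direction), the term $X(r_L)Y$ is again a multiple of $r_L$; more directly, Corollary~\ref{cor.proof} identifies $[[A(\maG):L]]$ with $p\pullback(r_L(A_1 \boxtimes A_2))$ and $[[A(\maG):L]]_{ni}$ contains this as $p\pullback$ of a Lie subalgebroid, and one checks that $r_L(A_1 \boxtimes A_2) \triangleleft (A_1 \boxtimes A_2)\cdot\text{(suitable module)}$ using Lemma~\ref{lemma.rem.prod} applied with $f = r_L$. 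Thus $\Gamma(A([[\maG:L]]))$ is a Lie ideal, not merely a subalgebra.

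The main obstacle I expect is bookkeeping rather than conceptual: one must be careful that the anchor of the $A_1$-component (pulled back via $\pi$) and of the spherical $TS\sp{n-1}$-component annihilate, or at least do not decrease the vanishing order of, the radial function $r_L$, so that $[\maW_{ni}, \maW] \subset \maW$ genuinely holds componentwise; the $\maV_2$-component (the $r\pa_r$ direction) is the delicate one, since $\pa_r(r_L) = 1 \neq 0$, but there the relevant bracket $[r\pa_r, r_L Y] = r_L Y + r_L[\pa_r, \cdot]\cdots$ still returns a multiple of $r_L$ because the ``correction'' $r\pa_r(r_L) = r_L$ itself carries the factor $r_L$. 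Making this uniform across the three blocks of~\eqref{eq.three.prime}, and checking it is preserved under the $\CI(M_1)$-linear extension from $M \smallsetminus L$, is the only real content; everything else follows formally from the functoriality of $\pullback$ and from Corollaries~\ref{cor.proof} and~\ref{cor.proof2}.
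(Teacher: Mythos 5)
Your treatment of the first assertion (the identification $A([[\maG:L]]_{ni}) \simeq [[A(\maG):L]]_{ni}$) follows the same route as the paper: the paper's proof simply declares this part identical to that of Theorem~\ref{theorem.blow-up}, and your computation --- replacing $\maH_{ad}$ by $\maH\times[0,\infty)$ so that the factor $r$ is dropped from the $p_1\sp{*}(A(\maH))$ block of Equation~\eqref{eq.doi} while the $\RR_{+}\sp{*}$-generator still contributes $r\,p_2\sp{*}(A_2)$ --- is exactly the intended modification and matches Definition~\ref{def.abup}. (A minor slip: $A(\maH\times[0,\infty))$ is $p_1\sp{*}(A(\maH))$, not $A(\maH)\boxtimes A_2$, since $[0,\infty)$ is a space in the sense of Example~\ref{ex.space}; but the formula you then write for the semidirect product is the correct one.)

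The Lie ideal assertion is where there is a genuine gap, and it is a gap you share with the paper, whose entire argument for this part is the observation that $r\Gamma(M;p_1\sp{*}(A_1))$ is a Lie ideal in $\Gamma(M;p_1\sp{*}(A_1))$. Your componentwise check only treats brackets of the form $[X,r_LY]$ with $r_LY$ in the first two blocks of $\maW$, and you single out the radial block $\maV_2$ as ``the delicate one'' --- but that block is the same in $\maW$ and in $\maW_{ni}$ and causes no trouble. The bracket you never examine is $[\maV_1,\maV_3]$: a section of $p_1\sp{*}(A_1)$, which $\maW_{ni}$ now admits \emph{without} the factor $r_L$, against a spherical section in $\Gamma(M_1;p_3\sp{*}(A_3))\subset\maW$. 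Writing $X=fX\sp{0}$ with $X\sp{0}$ basic and $f\in\CI(M_1)$ depending on the $S\sp{n-1}$ variable, and taking $Y\sp{0}$ a basic section of $p_3\sp{*}(TS\sp{n-1})$, one gets $[fX\sp{0},Y\sp{0}]=-\varrho(Y\sp{0})(f)\,X\sp{0}$, which lies in $\Gamma(M_1;p_1\sp{*}(A_1))$ but not in $r_L\Gamma(M_1;p_1\sp{*}(A_1))$, hence not in $\maW$. Concretely, in the setting of Example~\ref{ex.local} with $k=1$, $n=2$ (coordinates $x$ on $L$ and polar coordinates $(r,\theta)$ normally): $\cos(\theta)\pa_x\in\maW_{ni}$ and $\pa_\theta\in\maW$, yet $[\cos(\theta)\pa_x,\pa_\theta]=\sin(\theta)\pa_x$ has a $\pa_x$-coefficient not divisible by $r$, so it is not in $\maW$. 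Thus $[\maW_{ni},\maW]\subset\maW$ fails whenever $\operatorname{codim}L\ge 2$ and $A_1\neq 0$; the ideal claim is safe only in the codimension-one (asymptotically hyperbolic) case where $TS\sp{n-1}=0$, or if the coefficients of the $\maV_1$-block are constrained (e.g.\ to be pulled back from $L\times[0,\infty)$). This cannot be repaired by a more careful analysis of the radial direction; the statement itself, as it stands, needs to be weakened.
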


\begin{proof}
 The proof of the first part is identical to that of Theorem
 \ref{theorem.blow-up}. The second part follows from the fact that
 $r\Gamma(M; p_1\sp{*}(A_1))$ is a Lie ideal in $\Gamma(M;
 p_1\sp{*}(A_1))$.
\end{proof}

At the level of groupoids, one obtaines an action of $[[\maG: L]]_{ni}$
on $[[\maG: L]]$ for which the morphism $\Psi$ is equivariant.

\subsection{The asymptotically hyperbolic modification}
On can consider also the case when $L \subset M$ has a tubular
neighborhood that is not a ball bundle, but something similar.  Let us
assume then that $L \subset M$ is a face of codimension $n$ that is a
manifold with corners in its own. We assume that the neighborhood $U$
of $L$ is such that $U \simeq L \times [0, 1)\sp{n}$, with $\pi : U
  \simeq L \times [0, 1)\sp{n} \to L$ being the projection onto the
first component. Then our methods extend without change in this case,
the result being quite similar. Theorem \ref{thm.tame} and its proof
extend without change to this setting. So do the definition of
$[[\maG:L]]$ and of its anisotropic analog, $[[\maG:L]]_{ni}$, as well
as the results on their structure and Lie algebroid. One just has to
consider $S := L \times (S\sp{n-1} \cap [0, \infty)\sp{n})$.

The case when $L$ is of codimension one is especially relevant, since
it is related to the study of asymptotically hyperbolic spaces, see,
for instance \cite{ammannGrosse, bouclet2006, delay2008, moroianu2010,
  LNGeometric} and the references therein.

\subsection{The local structure of the desingularization for pair groupoids}
\label{ssec.example}
Let us see what these constructions become in the particular, but
important case when we apply these constructions to the pair
groupoid. For the purpose of further reference, let us introduce the
groupoid $\maH_k$ defined as the semidirect product with
$\RR_{+}\sp{*}$ of the adiabatic groupoid $(\RR\sp{k} )\sp{2}_{ad}$ of
the pair groupoid $(\RR\sp{k} )\sp{2}$, that is,
\begin{equation*}
 \maH_{k} \ede (\RR\sp{k} )\sp{2}_{ad} \rtimes \RR_{+}\sp{*} \, = \,
 \RR\sp{k} \times G \, \sqcup \, \bigl(\RR\sp{k} \times (0, \infty)
 \bigr) \sp{2}\,,
\end{equation*}
where $G$ is the semi-direct product $\RR\sp{k}\rtimes \RR_{+}\sp{*}$
and $\sqcup$ denotes again the disjoint union.

\begin{example}\label{ex.local}
 Let us assume that $\maG := \RR\sp{n+k} \times \RR\sp{n+k}$ is the
 pair groupoid and that $L = \RR\sp{k} \times \{0\} \subset
 \RR\sp{n+k} =: M$.  This gives $\maH = L \times L$.  We have $A(\maG)
 = T \RR\sp{n+k}$, and hence $L$ is an $A(\maG)$-tame submanifold.  We
 are, in fact, in the setting of Definition \ref{def.edge}, with $E =
 M$ and $\pi : E \to L$ the natural projection. We have already seen
 that $[M:L] \, \simeq \, S\sp{n-1} \times [0, \infty) \times
   \RR\sp{k}$. By definition $[[\maG:L]] :=
   \pi\pullback(\maH_k)$. Thus
 \begin{multline*}
  [[\maG:L]] \, = \, \bigl( S\sp{n-1} \bigr) \sp{2} \times \, \maH_k\,
  \simeq \, \bigl( S\sp{n-1} \bigr) \sp{2} \times \, \Bigl[ \RR\sp{k}
    \times G \, \sqcup \, \bigl( \RR\sp{k} \times (0, \infty) \bigr)
    \sp{2} \, \Bigr ]\\
  \simeq \, \bigl( S\sp{n-1} \bigr) \sp{2} \times \RR\sp{k} \times G
  \, \sqcup \, \bigl(S\sp{n-1} \times \RR\sp{k} \times (0, \infty)
  \bigr) \sp{2}\,,
 \end{multline*}
 where the first set in the disjoint union corresponds to the
 restriction to $S$, all sets of the form $X\sp{2}$ represent pair
 groupoids, and $G=\RR\sp{k}\rtimes \RR_{+}\sp{*}$, as before.
\end{example}
 
The example of the anisotropic desingularization is very similar.
 
\begin{example} We use the same framework as in the last
example, then
 \begin{equation*}
  [[\maG:L]]_{ni} \, = \, \bigl ( S\sp{n-1} \times \RR\sp{k} \bigr)
  \sp{2} \times \RR_{+}\sp{*} \, \sqcup \, \bigl(S\sp{n-1} \times
  \RR\sp{k} \times (0, \infty) \bigr) \sp{2} \, = \, \bigl ( S\sp{n-1}
  \times \RR\sp{k} \bigr)\sp{2} \times \maT,
 \end{equation*}
 where $\maT := [0, \infty) \rtimes \RR_{+}\sp{*}$, as in
   \ref{sssec.an}.  By writing $G = \RR\sp{k} \times \RR_{+}\sp{*}$ as
{\em sets}, we see that $[[\maG:L]]$ and $[[\maG:L]]_{ni}$ identify as
sets (but not as groupoids!).  In fact, with these identifications,
the natural morphism $\Psi : [[\maG:L]] \to [[\maG:L]]_{ni}$ becomes
$\Psi (s_1, s_1, x_1, x_2, t) = (s_1, s_1, x_1, 0, t)$, if
 \begin{equation*}
  (s_1, s_1, x_1, x_2, t) \, \in \, \bigl( S\sp{n-1} \bigr) \sp{2}
   \times \RR\sp{k} \times G \, = \, \bigl ( S\sp{n-1} \times
   \RR\sp{k} \bigr) \sp{2} \times \RR_{+}\sp{*} \,.
 \end{equation*}
 For $n = 1$, one may want to replace $(S\sp{0})\sp{2}$ with simply
 $(S\sp{0})\sp{2}$, which would make the resulting groupoid
 $d$-connected.
\end{example}

The case of an asymptotically hyperbolic modification is completely
similar.

\begin{example}\label{ex.local.face}
 In case we replace $\RR\sp{n+k}$ in the above example with $\RR\sp{k}
 \times [0, \infty)\sp{n}$, we simply replace the sphere $S\sp{n-1}$
   with $S\sp{n-1} \cap [0, \infty)\sp{k}$.
\end{example}

\begin{example}\label{ex.local.hyp} The simplest case is the one that 
models a true hyperbolic space, that is, $L = \RR\sp{k}$ and $M = L
\times [0, \infty)$. Then we have $[[\maG:L]] = \maH_k$ and
  $[[\maG:L]]_{ni} = \bigl ( \RR\sp{k} \bigr) \sp{2} \times
  \RR_{+}\sp{*} \sqcup \bigl(\RR\sp{k} \times (0, \infty) \bigr)
  \sp{2} = \bigl ( \RR\sp{k} \bigr)\sp{2} \times \maT.$
\end{example}

\subsection{An example: the \lp edge calculus\rp\ groupoid}

Let us conclude with a simple example. That is, we now treat the
desingularization of a groupoid with a smooth set of units over a
smooth manifold. Thus neither the large manifold nor its submanifold
have corners. This example is the one needed to recover the
pseudodifferential calculi of Grushin \cite{grushin71}, Mazzeo
\cite{mazzeo91}, and Schulze \cite{schulzeEdge}.

\begin{remark}\label{rem.smooth}
Let $M$ be a smooth, compact, connected manifold (so $M$ has no
corners).  Recall the path groupoid of $M$, consisting of homotopy
classes of end-point preserving paths $[0, 1] \to M$.  It is a
$d$-simply-connected Lie groupoid integrating $TM$ (that is, its Lie
algebroid is isomorphic to $TM$), so it is the maximal $d$-connected
Lie groupoid with this property. On the other hand, the minimal
groupoid integrating $TM$ is $\maG = M \times M$.  In general, a
$d$-connected groupoid $\maG$ integrating $TM$ will be a quotient of
$\maP(M)$, explicitly described in \cite{Gualtieri} (see also
\cite{MoerdijkMrcun02}).  For analysis questions, it is typically more
natural to choose for $\maG$ the minimal integrating groupoid $M
\times M$. We notice that in analysis one has to use sometimes
groupoids that are not $d$-connected \cite{CarvalhoYu}.
\end{remark}

We shall fix in what follows a smooth, compact, connected manifold $M$
(so $M$ has no corners) and a $d$-connected Lie groupoid $\maG$
integrating the Lie algebroid $TM \to M$.  The following example is
related to some earlier results of Grushin \cite{grushin71}. See also
Coriasco-Schulze \cite{coriascoSchulze},
Guillarmou-Moroianu-Park \cite{moroianu2010}, Lauter-Moroianu
\cite{LauterMoroianu1}, Lauter-Nistor \cite{LNGeometric}, Mazzeo
\cite{mazzeo91}, Schulze \cite{schulzeEdge}, and others, and can be
used to define the so-called \dlp edge calculus\drp.

\begin{example}\label{ex.smooth}
Let $L \subset M$ be an embedded smooth submanifold with tubular
neighborhood $U$ that we identify with the set of vectors of length $<
1$ in $NL$, the normal bundle to $L$ in $M$, as in
\ref{not.submanifold}. We denote by $\pi : S := \pa U \to L$ the
natural projection. Then recall that the blow-up $[M : L]$ of $M$
with respect to $L$ is the disjoint union
\begin{equation*}
 [M : L] \, := \, (M \smallsetminus L) \sqcup S \,,
\end{equation*}
with the topology of a manifold with boundary $S$.  We have that $L$
is automatically $A(\maG) = TM$-tame, so we can define $[[\maG: L]]$
(Definition \ref{def.blow-upG}), which is a Lie groupoid with base
$[M: L]$.
\end{example}

Let us spell out the structure of $[[\maG: L]]$ in order to better
understand the desingularization construction.

\begin{remark}{rem.str.desing}
We continue to use the notation introduced in Example \ref{ex.smooth}.
By the definition of the groupoid $[[\maG: L]]$, the open set $U_0 :=
M \smallsetminus L = [M:L]\smallsetminus S$ is a $[[\maG:
    L]]$-invariant subset and the restriction $[[\maG: L]]_{U_0}$
coincides with the reduction $\maG_{U_0}\sp{U_0}$. In particular, if
$\maG = M \times M$, then $[[\maG: L]]_{U_0} = \maG_{U_0}\sp{U_0} =
U_0 \times U_0$, the pair groupoid. On the other hand, the restriction
of $[[\maG: L]]$ to $S := [M:L] \smallsetminus U_0$ is a fibered
pull-back groupoid defined as follows.  We consider first $TL \to L$,
regarded as a bundle of (commutative) Lie groups. We let
$\RR_{+}\sp{*}$ act on the fibers of $TL \to L$ by dilation and define
the bundle of Lie groups $G_S \to L$ by $ G_{S} := TL \rtimes
\RR_{+}\sp{*} \to L,$ that is, the group bundle over $L$ obtained by
taking the semi-direct product of $TL$, by the action of
$\RR_{+}\sp{*}$ by dilations.  (See also Example \ref{ex.BLG}.) Then $
[[\maG: L]]_S \, := \, \pi\pullback(G)$.  In particular, $[[\maG:
    L]]_S$ does not depend on the choice of integrating groupoid
$\maG$.
\end{remark}

\begin{remark}
Let us choose $\maG := M \times M$.  As mentioned above, if a Lie
group acts on $M$ leaving $L$ invariant, then it will act on $\maG$,
and hence also on $[\maG:L]$, by Proposition
\ref{prop.des.Gamma}. This yields hence also an action of $\Gamma$ on
the edge calculus \cite{grushin71, mazzeo91, schulzeWong2014,
  schulzeEdge}.  See also \cite{krainer2014,
  plamenevskiiBook, seiler1999, vishikGrushin}.
\end{remark}

\begin{remark}
 By choosing $\maG := \maP(M)$, one obtains a \dlp covering edge
 calculus,\drp\ that is, a calculus that is on the universal covering
 manifold $\widetilde M \to M$, is invariant with respect to the group
 of deck transformations, and respects the edge structure along the
 lift of $L$ to $\widetilde M$. See \cite{nistorDocumenta} for
 applications of the covering calculus.
\end{remark}

By iterating this construction as in \cite{schroedinger}, one obtains
integral kernel operators on polyhedral domains.
It would be interesting to extend this example to the
pseudodifferential calculus on manifolds with boundary
\cite{KarstenCR}.

\def\cprime{$'$}

\end{document}